\title{On the Lasso for Graphical Continuous Lyapunov Models}
\author{Philipp Dettling, Mathias Drton, Mladen Kolar}
\date{}
\newcommand{\RR}{\mathbb{R}}
\newcommand{\PD}{\mathrm{PD}}
\definecolor{darkgreen}{rgb}{0,0.4,0}
\definecolor{MyBlue}{rgb}{0,0.08,0.7} 
\definecolor{MyRed}{rgb}{0.85,0.08,0}
\renewcommand*\env@matrix[1][\arraystretch]{%
  \edef\arraystretch{#1}%
  \hskip -\arraycolsep
  \let\@ifnextchar\new@ifnextchar
  \array{*\c@MaxMatrixCols c}}
\newcommand{\norm}[1]{\|#1\|}
\newcommand{\opnorm}[2]{| \! | \! | #1 | \! | \! |_{{#2}}}
\newcommand{\DM}{\Delta_{M}}
\newcommand{\tDM}{\tilde\Delta_{M}}
\newcommand{\DGamma}{\Delta_{\Gamma}}
\newcommand{\Dg}{\Delta_{g}}
\newcommand{\hGamma}{\hat\Gamma}
\newcommand{\hg}{\hat g}
\newcommand{\sGamma}{\Gamma^*}
\newcommand{\sg}{g^*}
\newcommand{\DSigma}{\Delta_\Sigma}
\newcommand{\hSigma}{\hat \Sigma}
\newcommand{\sSigma}{\Sigma^*}
\newcommand{\vtM}{{\rm vec}(\tilde M)}
\newcommand{\vsM}{{\rm vec}(M^*)}
\newcommand{\vhM}{{\rm vec}(\hat M)}
\newcommand{\vM}{{\rm vec}(M)}
\newcommand{\sign}{{\rm sign}}
\newcommand{\diag}{{\rm diag}}
\newcommand{\rbr}[1]{\left(#1\right)}
\numberwithin{equation}{section}
\definecolor{mygreen}{RGB}{37,170,37}
\definecolor{myteal}{RGB}{6,144,179}
\definecolor{myblue}{RGB}{59,100,189}
\definecolor{myorange}{RGB}{240,153,85}
\definecolor{myred}{RGB}{190,15,15}
\definecolor{myyellow}{RGB}{255,220,130}
\definecolor{mypurple}{RGB}{139,0,139}
\tikzset{%
  symbol/.style={
    draw=none,
    every to/.append style={
      edge node={node [sloped, allow upside down, auto=false]{$#1$}}
    },
  },
  on each segment/.style={
    decorate,
    decoration={
      show path construction,
      moveto code={},
      lineto code={
        \path [#1]
        (\tikzinputsegmentfirst) -- (\tikzinputsegmentlast);
      },
      curveto code={
        \path [#1] (\tikzinputsegmentfirst)
        .. controls
        (\tikzinputsegmentsupporta) and (\tikzinputsegmentsupportb)
        ..
        (\tikzinputsegmentlast);
      },
      closepath code={
        \path [#1]
        (\tikzinputsegmentfirst) -- (\tikzinputsegmentlast);
      },
    },
  },
  mid arrow/.style={postaction={decorate,decoration={
        markings,
        mark=at position .5 with {\arrow[#1]{stealth}}
      }}},
}
\newtheorem{theorem}{Theorem}[section]
\newtheorem{corollary}[theorem]{Corollary}
\newtheorem{lemma}[theorem]{Lemma}
\newtheorem{proposition}[theorem]{Proposition}
\newtheorem{example}[theorem]{Example}
\newtheorem{definition}[theorem]{Definition}
\newtheorem{remark}[theorem]{Remark}
\newenvironment{proof}{\paragraph{Proof:}}{\hfill$\square$}
\begin{document}

\maketitle

\begin{abstract}
Graphical continuous Lyapunov models offer a new perspective on modeling causally interpretable dependence structure in multivariate data by treating each independent observation as a one-time cross-sectional snapshot of a temporal process. Specifically, the models assume that the observations are cross-sections of independent multivariate Ornstein-Uhlenbeck processes in equilibrium. The Gaussian equilibrium exists under a stability assumption on the drift matrix, and the equilibrium covariance matrix is determined by the continuous Lyapunov equation. Each graphical continuous Lyapunov model assumes the drift matrix to be sparse, with a support determined by a directed graph. A natural approach to model selection in this setting is to use an $\ell_1$-regularization technique that, based on a given sample covariance matrix, seeks to find a sparse approximate solution to the Lyapunov equation. We study the model selection properties of the resulting lasso technique to arrive at a consistency result.  Our detailed analysis reveals that the involved irrepresentability condition is surprisingly difficult to satisfy. While this may prevent asymptotic consistency in model selection, our numerical experiments indicate that even if the theoretical requirements for consistency are not met, the lasso approach is able to recover relevant structure of the drift matrix and is robust to aspects of model misspecification. 
\end{abstract}

\textbf{Keywords:}
Graphical models, $\ell_1$-regularization, Lyapunov equation, support recovery

\section{Introduction}

Directed graphical models are powerful tools for exploring cause-effect relationships in multivariate data \citep{pearl:2009,spirtes:2000,peters2017}. The causal aspect of the models is built on the assumption that each variable is a function of parent variables and independent noise. This approach is also known as structural causal modeling. For directed acyclic graphs (DAGs), the resulting models have simple interpretations and statistically favorable density factorization properties that facilitate large-scale analyses \citep{maathuis2019}. The situation is more complicated when the graph is allowed to contain directed cycles, which represent feedback loops \citep{bongers2021}. Although a model can still be defined by solving structural equations, directed cycles prevent density factorization, making it more difficult to perform tasks such as computation of maximum likelihood estimates \citep{drton:fox:wang2019} or model selection \cite[e.g.,][]{richardson1996,amendola2020}, even in the case of linear models. Importantly, the interpretation of the models also becomes more involved and typically appeals to dynamic processes in a post-hoc way. For example, \cite{fischer1970} provided an interpretation based on data averaged over time, while alternative interpretations in terms of differential equations were suggested by \cite{mooij2013} and \cite{bongers2018}.

\cite{katie2019} and \cite{hansen2020} proposed a different perspective, in which the distribution of an observed sample $X_1,\dots,X_n \in \mathbb{R}^p$, independent and identically distributed, is modeled through a temporal process in equilibrium.  Specifically, each random vector $X_i$ is assumed to be a single cross-sectional observation of a multivariate Ornstein-Uhlenbeck process (a multivariate continuous-time autoregressive process), which leads to $X_i$ being multivariate normal. We emphasize that the $n$ observations are obtained from $n$ independent processes. This setup is suitable, in particular, for applications in biology where each of the $n$ independent organisms may live up to the time of measurement (e.g., for gene expression) but is sacrificed for the measurement.

The $p$-dimensional Ornstein-Uhlenbeck process is the solution to the stochastic differential equation
\begin{equation}
\label{eq:pdimOU}
    \mathrm{d}X(t)=M(X(t)-a)\,\mathrm{d}t+ D\, \mathrm{d}W(t),
\end{equation}
where $W(t)$ is a Wiener process, and $a \in \mathbb{R}^{p}$ and $M,D \in \mathbb{R}^{p \times p}$ are non-singular parameter matrices. The \emph{drift matrix} $M$ is the key object of interest in the work of \cite{katie2019} and \cite{hansen2020} as it determines the causal relations between the coordinates of the Ornstein-Uhlenbeck process $X(t)$; see also \cite{Mogensen2018CausalLF}.  Provided $M$ is stable (i.e., all eigenvalues have a strictly negative real part), $X(t)$ admits an equilibrium distribution that is multivariate normal with a positive definite covariance matrix. This covariance matrix, denoted by $\Sigma$, is determined as the unique matrix that solves the Lyapunov equation
\begin{equation}
\label{eq:lyapunoveq}
    M\Sigma+\Sigma M^{\top}+C=0,
\end{equation}
where $C=DD^{\top}$. The vector $a$ is the mean vector of the equilibrium distribution.  Subsequently, we will assume without loss of generality that $a=0$, i.e., our observations are centered. Moreover, we will focus on the case where the positive definite volatility matrix $C$ is known up to a scalar multiple.  Since scaling $(M,C)$ does not change the solution $\Sigma$ in~\eqref{eq:lyapunoveq}, this case can be studied by reducing to the setting where $C$ is fully known; see Remark~\ref{rem:CID} for a further detailed discussion.  

Our interest is now in the selection of models that postulate that the drift matrix $M=(M_{ij})$ is sparse. In other words, we consider the estimation of the sparsity pattern (or support) of the drift matrix $M$.  This support is naturally represented by a directed graph $G=(V,E)$ with a vertex set $V=\{ 1, \dots , p \}$ and an edge set $E$ that includes the edge $i \to j$ precisely when $M_{ji}\not=0$. A stable matrix $M$ will have negative diagonal entries, and therefore the edge set $E$ will always contain all self-loops $i\to i$. However, we will not draw the self-loops in figures showing graphs.
\begin{example}
\label{exa:graphM}
The graph $G=(\lbrace 1,2,3 \rbrace, \lbrace 1 \to 2,2 \to 3, 1\to 1, 2 \to 2, 3\to3 \rbrace)$, shown in Figure~\ref{fig:examplegraph}, corresponds to the support of the matrix
$$M = \begin{pmatrix}
m_{11} & 0 & 0 \\
m_{21} & m_{22} & 0 \\
0 & m_{32} & m_{33}
\end{pmatrix}.$$
\end{example}

\begin{figure}[t]
\begin{center}
\medskip
\begin{tikzpicture}[->,every node/.style={circle,draw},line width=1pt, node distance=1.5cm]
  \node (1)  {$1$};
  \node (2) [right of=1]{$2$};
  \node (3) [right of=2] {$3$};
\foreach \from/\to in {1/2,2/3}
\draw (\from) -- (\to);   
 \end{tikzpicture}  
\end{center}
\caption{Directed graph on 3 nodes.}
\label{fig:examplegraph}
\end{figure}

We remark that \cite{young2019} considered a related setup with discrete-time vector autoregressive (VAR) processes, which leads to the discrete Lyapunov equation.

\subsection{Support Recovery with the Direct Lyapunov Lasso}

In this paper, we will study an $\ell_1$-regularization method to estimate the support of the drift matrix $M$ from an i.i.d.~sample consisting of centered observations $X_1,\dots,X_n\in\mathbb{R}^p$. Let
\begin{equation}
\label{eq:samplecov}
    \hat{\Sigma} = \hat{\Sigma}^{(n)} = \frac{1}{n} \sum_{i = 1}^{n} X_i X_i^{\top}
\end{equation}
be the sample covariance matrix. The \emph{Direct Lyapunov Lasso} finds a sparse estimate of $M$ as a solution of the convex optimization problem
\begin{equation}
\label{eq:Frobeniuseq}
\min_{M \in \mathbb{R}^{p \times p}}
\frac{1}{2}\norm{M \hat{\Sigma} + \hat{\Sigma} M^{\top} + C}_{F}^{2} + \lambda \norm{M}_{1}
\end{equation}
with tuning parameter $\lambda>0$. This method is considered in numerical experiments by \cite{katie2019} as well as by \cite{hansen2020} who additionally explore non-convex methods based on regularizing Gaussian likelihood or a Frobenius loss. The Direct Lyapunov Lasso yields matrices in $\mathbb{R}^{p \times p}$ that can be non-stable.  If a stable estimate is required in such a case, one can appeal to projection onto the set of stable matrices; e.g., using techniques by \cite{Noferini2021}.

\subsection{Organization of the Paper}

We connect the Direct Lyapunov Lasso to more standard lasso problems by vectorizing the Lyapunov equation and describing the structure of the Hessian matrix for the smooth part of the Direct Lyapunov Lasso objective (Section~\ref{sec:hessian}).  In Section~\ref{sec:bounderror}, we present a result of statistical consistency (Theorem~\ref{thm:probsupport}), where the solution $\hat{M}$ of \eqref{eq:Frobeniuseq} is shown to converge in the max norm at a rate $\|\hat{M}-M^{*}\|_{\infty}=O(\sqrt{(dp)/n})$ with $d$ being the number of nonzero entries in the true drift matrix $M^{*}$.
Theorem \ref{thm:probsupport} only holds under an irrepresentability condition, which turns out to be more subtle than in the classical lasso regression. As we explore in Section~\ref{sec:irr}, the condition is highly dependent on the structure of the graph associated with the true signal.  In Section~\ref{sec:robust}, we present large-scale simulations (up to $p=50$) where the performance of the Direct Lyapunov Lasso on synthetic data is measured. Despite the theoretical requirements for consistency not being met, we observe performance at useful levels across various metrics. Finally, in Section~\ref{sec:realworlddata}, we apply the Direct Lyapunov Lasso to obtain an estimate of a protein signaling network that recovers important connections in a network that was previously reported as a gold standard.

\subsection{Motivating Example}

Before developing a detailed analysis of the Direct Lyapunov Lasso, we present an example that illustrates the behavior of estimates for growing sample size and highlights the impact of the irrepresentability condition.  We defer some of the details of how the example is designed to Appendix~\ref{sec:appendixsupplementexample}. 
\begin{example}
\label{exa:pathcycle}
Let $G_1$ be the directed path from 1 to 5, and let $G_2$ be the 5-cycle obtained by adding the edge $5\to 1$; see Figure~\ref{tab:intro}. For $G_1$ we define a (well-conditioned) stable matrix $M_1^{*}$ by setting the diagonal to $(-2,-3,-4,-5,-6)$ and the four nonzero subdiagonal entries to $0.65$. For $G_2$, we consider two cases.  First, we add the fixed entry $m_{15}=0.65$ to $M_1^{*}$ to obtain the matrix $M_2^{*}$. Second, we similarly include $m_{15}$ but select it randomly (uniform on $[0.5,1]$) in 100 instances. Using the Lyapunov equation \eqref{eq:lyapunoveq} with $C=2I_{p}$, for each setting and each one of 6 different sample sizes $n$ we simulate 100 Gaussian datasets. We also consider $n=\infty$, i.e., taking the population covariance matrices as input to the method. We calculate solutions to the Direct Lyapunov Lasso \eqref{eq:Frobeniuseq} for 100 choices of the regularization parameter $\lambda$.  From these solutions we compute the maximum accuracy, the maximum $\text{F}_1$-score, and the area under the ROC curve. Figure~\ref{fig:SR1} plots the performance measures, averaged over the 100 datasets in each pairing of setup and sample size. There the blue curves refer to $G_1$, the red curves to $G_2$ with $m_{15}=0.65$ fixed, and the green curves to $G_2$ with $m_{15}$ chosen randomly. We observe that for every sample size and performance measure, the Direct Lyapunov Lasso performs better for the path $G_1$ than for the cycle $G_2$. When the sample size is $n=10^{4}$, we observe an almost perfect recovery of $G_1$. However, increasing the sample size when recovering $G_2$ does not result in perfect recovery. The choice of $m_{15}=0.65$ is not particularly unfortunate---averaging over various completions does not improve the metrics. We conclude that while learning useful structure in either case, the Direct Lyapunov Lasso is consistent only for the considered path. Our subsequent analysis explains this behavior, which is a consequence of the failure of the irrepresentability condition in~\eqref{eq:irrcond}.
\end{example}

\begin{figure}[t]
    \centering\def\figmod{.8}
\begin{tabular}{c@{\hspace{1.5cm}} c}
\begin{tikzpicture}[->,every node/.style={circle,draw},line width=1pt, node distance=1.25cm]
  \node (1) at (0,0)     {$1$};
  \node (2) at (1*\figmod,-2*\figmod)     {$2$};
  \node (3) at (3*\figmod,-2*\figmod)     {$3$};
  \node (4) at (4*\figmod,0)     {$4$};
  \node (5) at (2*\figmod,1*\figmod)     {$5$};
\foreach \from/\to in {1/2,2/3,3/4,4/5}
\draw (\from) -- (\to);   
\end{tikzpicture}
&
\begin{tikzpicture}[->,every node/.style={circle,draw},line width=1pt, node distance=1.25cm]
  \node (1) at (0,0)     {$1$};
  \node (2) at (1*\figmod,-2*\figmod)     {$2$};
  \node (3) at (3*\figmod,-2*\figmod)     {$3$};
  \node (4) at (4*\figmod,0)     {$4$};
  \node (5) at (2*\figmod,1*\figmod)     {$5$};
\foreach \from/\to in {1/2,2/3,3/4,4/5}
\draw (\from) -- (\to);  
\foreach \from/\to in {5/1}
\draw[color=red] (\from) -- (\to); 
\end{tikzpicture}
\end{tabular}
\caption{Left: The graph $G_1$, a path 1 to 5. \  Right: The graph $G_2$, the 5-cycle.}
\label{tab:intro}
\end{figure}

\begin{figure}[t]
    \centering
    \includegraphics[width=0.99\textwidth]{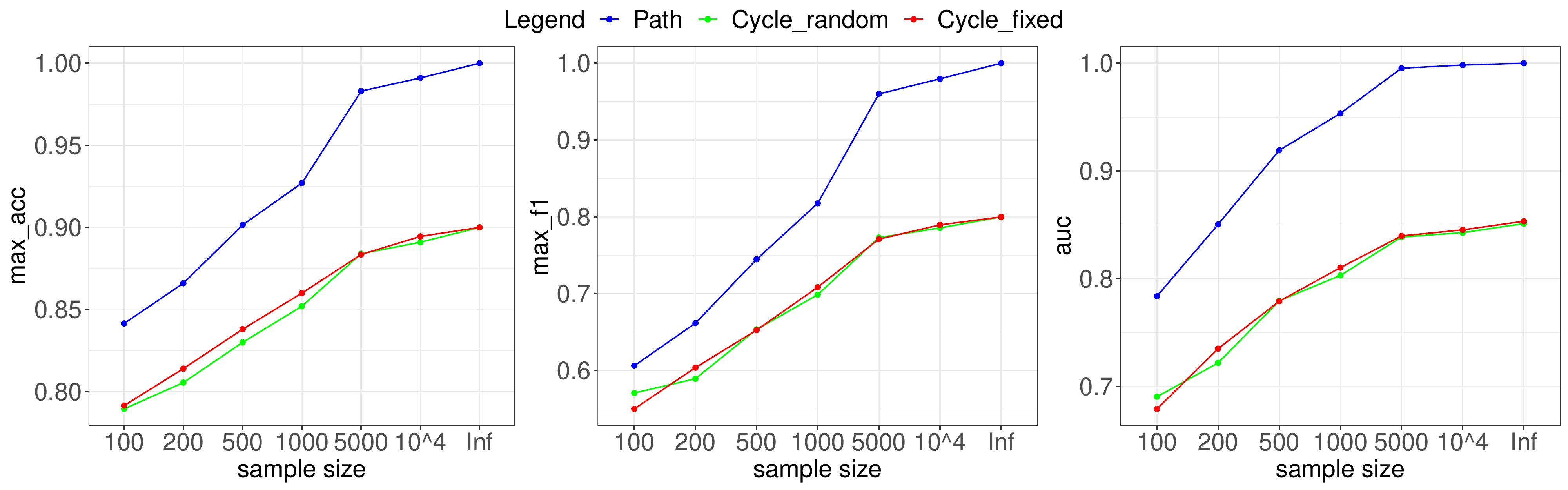}
     \caption{Performance measures for sample sizes $n=10^1,\dots,10^4, \infty$ and models given by the graphs $G_1$  and $G_2$ from Figure~\ref{tab:intro}, one choice of edgeweights for the path, one choice of edgeweights for the cycle (Cycle fixed) and 100 random completions to the 5-cycle (Cycle random). Left: maximal accuracy, Middle: maximal $\text{F}_1$-score, Right: area under the ROC curve.}
    \label{fig:SR1}
\end{figure}

\subsection{Notation}

Let $b\in[1,\infty]$.  The $\ell_b$-norm of $v\in\mathbb{R}^p$ is $
\|v\|_{b}=( \sum_{i=1}^{p} |v_{i}|^{b} )^{1/b}
$,
with $\|v\|_{\infty}=\max_{1\leq i \leq n}|v_i|$.
We may apply this vector norm to a matrix $A=(a_{ij})\in \mathbb{R}^{p\times p}$ and obtain the norm $\|A\|_{b}=(\sum_{i=1}^{p} \sum_{j=1}^{n} |a_{ij}|^{b}  )^{1/b}$.  In particular, $\|A\|_{F}:=\|A\|_{2}$ is the Frobenius norm.
We denote the associated operator norm by $\opnorm{A}{b}=\max\{ \|Ax\|_b : \|x\|_b=1\}$. Specifically, we use $\opnorm{A}{2}$ to denote the spectral norm, given by the maximal singular value of $A$, and
$\opnorm{A}{\infty}=\max_{1\leq i \leq p} \sum_{j=1}^{p} |a_{ij}|$ to denote the maximum absolute row sum.

For an index set $S$, we write $A_{\cdot S}$ for the submatrix of $A=(a_{ij})$ that is obtained by selecting the columns indexed by $S$.
The matrices $A_{S \cdot}$ and $A_{SS}$ are defined analogously by selection of rows or both rows and columns, respectively.  
The vec-operator stacks the columns of $A$, giving the vector $\text{vec}(A)= (a_{11}, a_{21}, \dots, a_{p1},\dots,a_{1p},\dots,a_{pp})^{\top}\in\mathbb{R}^{p^2}$.  The diag-operator turns a vector $v\in\mathbb{R}^p$ into the diagonal matrix $\text{diag}(v)\in\mathbb{R}^{p\times p}$ that has $v_i$ as its $i$-th diagonal entry. The Kronecker product of $A$ and another matrix $B=(b_{uv}) \in \RR^{p \times p}$ is denoted by $A\otimes B$.  It is a matrix in $\RR^{p^2 \times p^2}$ with the entries $[A\otimes B]_{(i-1)p+j,(k-1)p+l}=A_{ik}B_{jl}$. The commutation matrix in $\mathbb{R}^{p^2 \times p^2}$ is the permutation matrix $K^{(p,p)}$ such that $K^{(p,p)} \text{vec}(A) = \text{vec}(A^{\top})$. See, e.g., \cite{bernstein2016}.

Finally, we write $\text{Sym}_p$ for the space of symmetric matrices in $\mathbb{R}^{p\times p}$.  We use $\text{PD}_p$ to denote the cone of $p \times p$ positive definite matrices.  The set of stable matrices in $\mathbb{R}^{p\times p}$ is denoted $\text{Stab}_p$.


\section{Gram Matrix of the Direct Lyapunov Lasso}
\label{sec:hessian}

In this section, we rewrite the smooth part of the objective of the Direct Lyapunov Lasso from~\eqref{eq:Frobeniuseq} in terms of the vectorized drift matrix and present the resulting Hessian matrix.  

The Lyapunov equation from \eqref{eq:lyapunoveq} is a linear matrix equation and may be rewritten as
\begin{equation}
\label{eq:veclyapunov1}
    A(\Sigma)\,\text{vec}(M)+\text{vec}(C)=0,
\end{equation}
where the $p^2 \times p^2$ matrix $A(\Sigma)$ has its rows and columns indexed by pairs $(i,j)\in\{1,\dots,p\}^2$ and takes the form
\begin{equation}
\label{eq:ASigma}
   A(\Sigma) = (\Sigma \otimes I_{p})+(I_{p} \otimes \Sigma) K^{(p,p)}.
\end{equation}
We have $\text{vec}(M\Sigma) =(\Sigma \otimes I_{p}) \text{vec}(M)$ and $\text{vec}(\Sigma M^{\top}) = (I_{p} \otimes \Sigma)K^{(p,p)} \text{vec}(M)$.  By the symmetry of the Lyapunov equation, $A(\Sigma)$ has two copies of each row corresponding to an off-diagonal entry in the Lyapunov equation. Retaining this redundancy will be helpful for later arguments, as it preserves the Kronecker product structure in \eqref{eq:ASigma}. A display of the matrix $A(\Sigma)$ is provided in Example~\ref{exa:ASigmareduced} in the appendix. Define the \emph{Gram matrix}
\begin{equation}
\label{def:Gamma}
    \Gamma(\Sigma) := A(\Sigma)^{\top} A(\Sigma)\;\in\mathbb{R}^{p^2\times p^2}
\end{equation}
and the vector
\begin{equation}
 \label{def:g}
   g(\Sigma) := -A(\Sigma) \mathrm{vec}(C)\;\in\mathbb{R}^{p^2}.
\end{equation}
Omitting a constant from the objective function, the Direct Lyapunov Lasso problem from \eqref{eq:Frobeniuseq} may be reformulated as
\begin{equation}
\label{eq:lyapunovoptim}
\min_{M \in \RR^{p\times p}}
\frac{1}{2} \, \mathrm{vec}(M)^{\top} \Gamma(\hat{\Sigma}) \mathrm{vec}(M) - g(\hat{\Sigma})^{\top} \mathrm{vec}(M) 
+\lambda \norm{\mathrm{vec}(M)}_{1}. 
\end{equation}

As noted in the introduction, one difficulty that arises in the analysis of the solution of \eqref{eq:lyapunovoptim} is the fact that the Gram matrix has entries that are quadratic polynomials in $\Sigma$ with $p$ terms (i.e., the number of terms scales with the size of the problem). This fact can be seen in the appearance of $\Sigma^2$ in the following formula for the Gram matrix.
\begin{lemma}
\label{lem:gammakronecker}
The Gram matrix for a given covariance matrix $\Sigma$ is equal to
\begin{align*}
    \Gamma(\Sigma) = A(\Sigma)^{\top} A(\Sigma) = 2(\Sigma^2 \otimes I_{p})+ (\Sigma \otimes \Sigma) K^{(p,p)} +K^{(p,p)} (\Sigma \otimes \Sigma). 
\end{align*}
\end{lemma}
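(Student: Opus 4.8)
The plan is to compute $A(\Sigma)^{\top} A(\Sigma)$ directly from the expression $A(\Sigma) = (\Sigma \otimes I_{p}) + (I_{p} \otimes \Sigma)K^{(p,p)}$ in \eqref{eq:ASigma}, exploiting three standard facts: the symmetry $\Sigma^{\top} = \Sigma$ (valid since $\Sigma \in \text{Sym}_p$), the mixed-product rule $(A\otimes B)(C \otimes D) = AC \otimes BD$, and the elementary properties of the commutation matrix $K^{(p,p)}$.

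First I would transpose $A(\Sigma)$. Using $(\Sigma \otimes I_{p})^{\top} = \Sigma \otimes I_{p}$ and $[(I_{p} \otimes \Sigma)K^{(p,p)}]^{\top} = (K^{(p,p)})^{\top} (I_{p} \otimes \Sigma)$, together with the fact that $K^{(p,p)}$ is a symmetric involution (so that $(K^{(p,p)})^{\top} = K^{(p,p)}$ and $(K^{(p,p)})^{2} = I_{p^2}$), this gives $A(\Sigma)^{\top} = (\Sigma \otimes I_{p}) + K^{(p,p)}(I_{p} \otimes \Sigma)$.

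Next I would expand the product $A(\Sigma)^{\top} A(\Sigma)$ into its four cross terms. The two ``diagonal'' terms are $(\Sigma \otimes I_{p})(\Sigma \otimes I_{p}) = \Sigma^{2} \otimes I_{p}$ and $K^{(p,p)}(I_{p} \otimes \Sigma)(I_{p} \otimes \Sigma)K^{(p,p)} = K^{(p,p)}(I_{p} \otimes \Sigma^{2})K^{(p,p)}$, both via the mixed-product rule. The two ``off-diagonal'' terms are $(\Sigma \otimes I_{p})(I_{p} \otimes \Sigma)K^{(p,p)} = (\Sigma \otimes \Sigma)K^{(p,p)}$ and $K^{(p,p)}(I_{p} \otimes \Sigma)(\Sigma \otimes I_{p}) = K^{(p,p)}(\Sigma \otimes \Sigma)$, again by the mixed-product rule; these already match two of the three summands in the claimed formula.

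The one step that needs a genuine commutation-matrix identity---and hence the main obstacle---is simplifying the second diagonal term. Here I would invoke the conjugation identity $K^{(p,p)}(A \otimes B)K^{(p,p)} = B \otimes A$ for $p \times p$ matrices $A,B$, applied with $A = I_{p}$ and $B = \Sigma^{2}$, to obtain $K^{(p,p)}(I_{p} \otimes \Sigma^{2})K^{(p,p)} = \Sigma^{2} \otimes I_{p}$. Adding this to the first diagonal term yields $2(\Sigma^{2} \otimes I_{p})$, and combining with the two off-diagonal terms produces exactly the stated expression. All of the Kronecker-product and commutation-matrix facts used here are recorded in \cite{bernstein2016}.
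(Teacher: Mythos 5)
Your proposal is correct and follows essentially the same route as the paper's proof: both expand $A(\Sigma)^{\top}A(\Sigma)$ into four terms using the mixed-product rule and then collapse the term $K^{(p,p)}(I_p\otimes\Sigma^2)K^{(p,p)}$ to $\Sigma^2\otimes I_p$ via the conjugation identity $K^{(p,p)}(A\otimes B)K^{(p,p)}=B\otimes A$. Your write-up merely spells out the intermediate steps in more detail.
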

\begin{proof}
Apply the rules
$(A \otimes B)^{\top} = (A^{\top} \otimes B^{\top})$, 
$(A \otimes B) (C \otimes D) =(AC) \otimes (BD)$ and $K^{(p,p)} (A\otimes B) K^{(p,p)} = B \otimes A$ to deduce that
\begin{align*}
    A(\Sigma)^{\top}A(\Sigma)&=
   [(\Sigma \otimes I_p)+K^{(p,p)}(I_p \otimes \Sigma)] [(\Sigma \otimes I_p)+(I_p \otimes \Sigma) K^{(p,p)}]\\
    &=2(\Sigma^2 \otimes I_{p})+ (\Sigma \otimes \Sigma) K^{(p,p)} +K^{(p,p)} (\Sigma \otimes \Sigma). 
\end{align*}
\end{proof}


\section{Consistent Support Recovery with the Direct Lyapunov Lasso} 
\label{sec:bounderror}

We now provide a probabilistic guarantee that the Direct Lyapunov Lasso is able to recover the support of the true population drift matrix that defines the data-generating distribution. 
Let $M^{*}$ denote the true value of the drift matrix in \eqref{eq:pdimOU}, and let $\Sigma^{*}$ be the associated true covariance matrix of the observations.  We write $\hat{M}$ for the solution of the Direct Lyapunov Lasso problem in \eqref{eq:Frobeniuseq}. The support of $M^*$ is the set of all indices of nonzero elements and is denoted by 
\begin{equation*}
    S\equiv S(M^{*})=\lbrace (j,k):M^{*}_{jk}\neq 0 \rbrace.
\end{equation*}
We write $d=|S|$ for the size of the support of $M^{*}$. The support set of the estimate $\hat M$ is 
\begin{align*}
    \hat{S} \equiv S(\hat{M})=\lbrace (j,k):\hat{M}_{jk}\neq 0 \rbrace.
\end{align*}
Let $\hGamma = \Gamma(\hat \Sigma)$, $\sGamma = \Gamma(\Sigma^*)$, $\hg = g(\hat \Sigma)$, $\sg = g(\Sigma^*)$. Furthermore, let $\DGamma = \hGamma - \sGamma$ and $\Dg = \hg - \sg$, and define the quantities 
\begin{align*}
c_{\Gamma^{*}}=\opnorm{(\sGamma_{SS})^{-1}}{\infty}\, \  \text{and}\,\, \ c_{M^{*}} =    \norm{\text{vec}(M^*)}_{\infty}.
\end{align*}
The definition of $c_{\Gamma^{*}}$ requires $\Gamma^{*}_{SS}$ to be invertible, which is an implicit assumption on the identifiability of the parameters; see Remark~\ref{rem:identifiability} in the Appendix.
By suitably adapting work on  structure learning for undirected graphical models \citep{lin2015estimation}, one can derive a deterministic guarantee for success of the Direct Lyapunov Lasso provided  the estimation errors $\DGamma$ and $\Dg$ are sufficiently small (Theorem~\ref{thm:deterministicres} in the Appendix).  This result leads to 
the following probabilistic result.


\begin{theorem}
\label{thm:probsupport}
Suppose that the data are generated as $n$ i.i.d.~draws from the Gaussian equilibrium distribution of a $p$-dimensional Ornstein-Uhlenbeck process defined by a drift matrix $M^{*} \in \text{Stab}_p$ and a matrix $C \in \PD_p$.  Let $S$ be the support of $M^{*}$.  Assume that $\Gamma^{*}_{SS}$ is invertible and that the irrepresentability condition 
\begin{equation}
\label{eq:irrcond}
           \opnorm{\Gamma^{*}_{S^cS} (\Gamma^{*}_{SS})^{-1}}{\infty}<1-\alpha
\end{equation}
holds for $\alpha \in (0,1]$. 
Let $c_{\Sigma^{*}}=\opnorm{\Sigma^{*}}{2}$, $c_{C} = \norm{\text{vec}(C)}_{2}$,
\begin{align*}
&\tilde{c}= \max \left\lbrace \frac{4 \max \lbrace 1,c_{\Sigma^{*}}^2 \rbrace (4+8c_{\Sigma^{*}} )^2}{c_3}, 16c_{1}^2c_{\Sigma^{*}}^2 (4+8c_{\Sigma^{*}} )^2, \frac{16 \max \lbrace 1,c_{\Sigma^{*}}^2 \rbrace c_{C}^2}{c_3}, 64c_{1}^2c_{\Sigma^{*}}^2 c_{C}^2  \right \rbrace,\\
&
c_{*} = \frac{6}{\alpha}c_{\Gamma^{*}},
\end{align*}
where $\lbrace c_{i} \rbrace_{j=1}^{3}$ are universal constants (from Theorem~\ref{thm:spectraltheo} below) with $c_{1}>\max \lbrace 1,\opnorm{\Sigma^{*}}{2} \rbrace$. Suppose the sample size satisfies $n > \tau_{1} \tilde{c} d p  \max\lbrace c_{*}^{2}, {1}/{4} \rbrace$ for $\tau_{1} > 1$, and the regularization parameter is chosen as
\begin{align*}
    \lambda > \frac{3c_{M^{*}}(2-\alpha)}{\alpha} \sqrt{\frac{\tau_{1} \tilde{c} d p}{n}}.
\end{align*}
Then the following statements hold with probability at least $1-c_{2} \exp \left( - \tau_{1}p\right)$:
\begin{itemize}
    \item[a)] The minimizer $ \hat{M}$ is unique, has its support included in the true support $(\hat{S}\subset S)$, and satisfies 
    \begin{align*}
        \|\hat{M}-M^{*}\|_{\infty} < \frac{2c_{\Gamma^{*}}}{2-\alpha} \lambda.
    \end{align*}
    \item[b)] Furthermore, if 
    \begin{align*}
        \underset{\underset{(j,k) \in S}{1\leq j <k \leq m}}{\min}|M^{*}_{jk}| > \frac{2c_{\Gamma^{*}}}{2-\alpha}\lambda,
    \end{align*}
    then $\hat{S}=S$ and $\mathrm{sign}(\hat{M}_{jk})=\mathrm{sign}(M^{*}_{jk})$ for all $(j,k) \in S$.
\end{itemize}
\end{theorem}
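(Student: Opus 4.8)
The plan is to derive Theorem~\ref{thm:probsupport} as the probabilistic consequence of two ingredients that are already available: the deterministic success guarantee of Theorem~\ref{thm:deterministicres}, and the concentration bounds of Theorem~\ref{thm:spectraltheo}. The deterministic result says that, once the estimation errors $\DGamma = \hGamma - \sGamma$ and $\Dg = \hg - \sg$ are small enough relative to $\lambda$, $\alpha$, and $c_{\Gamma^{*}}$, the Direct Lyapunov Lasso reproduces the support of $M^{*}$ and controls $\|\hat M - M^{*}\|_{\infty}$. The entire task therefore reduces to choosing the constants $\tilde c$ and $c_{*}$ and the stated thresholds on $n$ and $\lambda$ so that the hypotheses of Theorem~\ref{thm:deterministicres} hold on an event of probability at least $1 - c_{2}\exp(-\tau_{1}p)$.

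To recall why the deterministic step works, I would use the primal--dual witness construction. Writing $\beta^{*} = \vsM$, the starting point is the population first-order identity $\sGamma\beta^{*} = \sg$, which holds because $M^{*}$ solves the population Lyapunov equation exactly, i.e.\ $A(\sSigma)\beta^{*} + \text{vec}(C) = 0$. One then builds a candidate solution by fixing its $S^{c}$-block to zero and solving the restricted stationarity equation $\hGamma_{SS}\vhM_{S} - \hg_{S} + \lambda\hat z_{S} = 0$ on $S$, with $\hat z_{S} = \sign(\beta^{*}_{S})$, and checks strict dual feasibility $\bigl|[\hGamma_{S^{c}S}\vhM_{S} - \hg_{S^{c}}]_{(j,k)}\bigr| < \lambda$ for every $(j,k) \in S^{c}$. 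The irrepresentability condition \eqref{eq:irrcond} is exactly what keeps this strict inequality valid once the perturbations $\DGamma$ and $\Dg$ are accounted for, and the $\ell_{\infty}$ error bound $\|\hat M - M^{*}\|_{\infty} < \tfrac{2c_{\Gamma^{*}}}{2-\alpha}\lambda$ comes from expanding $\vhM_{S} - \beta^{*}_{S} = \hGamma_{SS}^{-1}(\hg_{S} - \hGamma_{SS}\beta^{*}_{S} - \lambda\hat z_{S})$ and bounding it through $c_{\Gamma^{*}} = \opnorm{(\sGamma_{SS})^{-1}}{\infty}$. Part~(b) is then immediate: since $\hat S \subset S$ and each coordinate of $\vhM_{S}$ lies within $\tfrac{2c_{\Gamma^{*}}}{2-\alpha}\lambda$ of $\beta^{*}_{S}$, the minimum-signal hypothesis forces every true nonzero entry to remain nonzero with the correct sign, so $\hat S = S$ and the signs agree.

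For the probabilistic step I would invoke Theorem~\ref{thm:spectraltheo} to bound $\DGamma$ and $\Dg$ on an event of probability at least $1 - c_{2}\exp(-\tau_{1}p)$. Because $\hGamma$ and $\hg$ are respectively quadratic and linear in $\hSigma$, these bounds ultimately follow from the concentration of $\hSigma$ about $\sSigma$, and the factors $\max\{1, c_{\Sigma^{*}}^{2}\}$, $c_{\Sigma^{*}}$, and $c_{C} = \norm{\text{vec}(C)}_{2}$ appearing in $\tilde c$ track the Lipschitz behavior of these polynomial maps. The point of the sample-size threshold $n > \tau_{1}\tilde c\, dp\max\{c_{*}^{2}, 1/4\}$, together with $c_{*} = \tfrac{6}{\alpha}c_{\Gamma^{*}}$, is to make the resulting bounds on $\DGamma$ and $\Dg$ small enough---measured against $\alpha$, $\lambda$, and $c_{\Gamma^{*}}$---to meet the deterministic hypotheses, while the lower bound on $\lambda$ guarantees that $\lambda$ dominates the $\Dg$-type error yet remains of order $\sqrt{dp/n}$, which produces the stated convergence rate.

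The step I expect to be the main obstacle is controlling $\DGamma$ in the row-sum operator norm $\opnorm{\cdot}{\infty}$ that the irrepresentability argument requires. In the classical lasso the Gram matrix is an empirical second moment whose entries concentrate at rate $\sqrt{\log p / n}$, but here, by Lemma~\ref{lem:gammakronecker}, each entry of $\hGamma$ is a quadratic polynomial in $\hSigma$ built from $\hSigma^{2}$, hence a sum of $p$ products of sample covariances. Propagating the concentration of $\hSigma$ through this quadratic map and then through the operator norm is what replaces the usual $\sqrt{d\log p/n}$ rate by $\sqrt{dp/n}$ and introduces the extra factor of $p$ into the sample-size requirement; carrying the universal constants $c_{1}, c_{2}, c_{3}$ of Theorem~\ref{thm:spectraltheo} consistently into the deterministic thresholds is the accompanying bookkeeping burden.
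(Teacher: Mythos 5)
Your proposal is correct and follows essentially the same route as the paper: the paper's proof in Appendix~\ref{app:probresult} likewise reduces the statement to the deterministic Theorem~\ref{thm:deterministicres} (proved via the primal--dual witness construction) and then verifies that the sample-size and $\lambda$ conditions force $\opnorm{(\DGamma)_{\cdot S}}{\infty}$ and $\norm{\Dg}_{\infty}$ below the required thresholds, using the concentration of $\hSigma$ from Theorem~\ref{thm:spectraltheo} propagated through the Kronecker structure of $\Gamma$ via Lemma~\ref{lem:express error in delta}. Your identification of the $\opnorm{\cdot}{\infty}$ control of $\DGamma$ as the source of the extra factor of $p$ matches the paper's discussion exactly.
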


The reader may be surprised by the sample size requirement  
of $n = \Omega(d p)$; recall that $d=|S|$ is the size of the support of $M^*$. Since $S$ includes the diagonal of $M^*$, we have $d \geq p$.  Under sparsity, however, $dp$ is not much larger than the number of unknown parameters $p^{2}$, making for a non-trivial guarantee.
This said, $\Omega(dp)$ is far larger than
the sample size requirement a reader may be familiar with from the glasso for learning undirected conditional independence, which is on the order of $d^2 \log p$ but with $d$ being the maximum number of nonzero entries in any row of a true precision matrix \citep{ravikumar2008highdimensional}.  This allows for far higher-dimensional settings, but 
crucially relies on 
the glasso having a Hessian that concentrates well entry-wise and a simple connection between the covariance matrix and the sparse precision matrix.  In contrast, the Lyapunov Lasso has a denser Hessian/Gram matrix that includes entries that become heavier-tailed as the dimension $p$ grows.

In order to prove  Theorem~\ref{thm:probsupport}, we combine the aforementioned deterministic analysis, stated in Theorem~\ref{thm:deterministicres}, with the concentration results we obtain in Section~\ref{sec:concentrationhessian}.  We defer the detailed proof 
to Appendix~\ref{app:probresult}. Here, we present only a short sketch.
\begin{proof}[Sketch]
Applying Theorem~\ref{thm:deterministicres} to obtain the probabilistic result in Theorem~\ref{thm:probsupport} requires showing that a probability of the form $\mathbb{P}(\opnorm{(\Gamma(\hat\Sigma)-\Gamma(\Sigma^*))_{\cdot S}}{\infty}\ge \epsilon_1)$ is small when the sample size is suitably large. Representing the Hessian $\Gamma$ as Kronecker products of the covariance matrix $\Sigma$, Lemma~\ref{lem:gammakronecker} permits to trace back the problem of deriving a concentration inequality for $\Gamma$ to finding concentration inequalities for $\Sigma$. Ultimately, this connection is made in Lemma~\ref{lem:express error in delta} securing that if 
 \begin{align*}
        \opnorm{\DSigma}{2}=\opnorm{\hat{\Sigma}-\Sigma^{*}}2 <  \min \left\lbrace \frac{\epsilon_1}{\sqrt{d}(4+8 c_{\Sigma^{*}})},\frac{\epsilon_2}{2 c_{C}}  \right \rbrace,
\end{align*}
then it holds that
\begin{align*}
\opnorm{(\DGamma)_{\cdot S}}{\infty} < \epsilon_1  
\quad  \text{ and } \quad  
\norm{\Dg}_{\infty} < \epsilon_2.
\end{align*}
A known concentration result for $\Sigma$ is given in Theorem~\ref{thm:spectraltheo}. Combining this with Lemma~\ref{lem:express error in delta} and using the concentration result for $\Gamma$ in Theorem~\ref{thm:deterministicres} yields Theorem~\ref{thm:probsupport}.
\end{proof}

Sample size aside, the crucial assumption for support recovery is the irrepresentability condition.  A detailed analysis of this condition is the subject of the next section.

\section{Irrepresentability Condition}
\label{sec:irr}
The irrepresentability condition is vital for Theorem~\ref{thm:probsupport}.
The condition is well known from the standard lasso regression, but turns out
to be much more subtle for the Direct Lyapunov Lasso. In regression and in the Lyapunov model, the irrepresentability condition makes an assumption about the Gram matrix in light of the signal. However, the Gram matrix in regression depends solely on the predictors, whereas the Gram matrix for the Lyapunov model is obtained from the matrix $A(\Sigma)$ which depends on the signal itself (Example~\ref{exa:ASigmareduced}). 

We present an analysis of irrepresentability for the Direct Lyapunov Lasso under weak dependence and find that the condition is very restrictive.  Indeed, it leads to a restrictive ordering condition on the diagonal of the drift matrix;  in particular, the support must define a DAG.
For cyclic graphs, it seems to difficult to construct general examples of irrepresentability.  Simulations suggest that such examples do exist but are rare.  We refer to Appendix~\ref{sec:irrappendix} for details on cyclic graphs as well as a discussion of a weaker notion of irrepresentability that is necessary for consistency.

In our study of the irrepresentability condition, we will consider the case where the volatility matrix $C$ is a multiple of the identity; specifically, we assume $C=2I_p$ \emph{throughout this section}. Other diagonal matrices $C$ would also be tractable for analysis and would yield analogous conclusions. Before proceeding, we recall that a matrix $M^{*}\in \text{Stab}_{p}$ with support $S$ satisfies the irrepresentability condition if 
\begin{equation}
\label{eq:irrcondition}
      \rho(M^{*}) :=   \|\Gamma^{*}_{S^{c}S}(\Gamma^{*}_{SS})^{-1}\|_{\infty}
\end{equation}
is strictly smaller than 1;  the condition in~\eqref{eq:irrcond} stated an explicit gap $\alpha>0$.  In the following, we will refer to the number $\rho(M^{*})$ as the \emph{irrepresentability constant} of $M^{*}$.

In standard lasso regression, the irrepresentability condition is fulfilled when each irrelevant predictor exhibits little correlation with the active predictors.  In particular, the condition would hold in a neighborhood of a diagonal Gram matrix.

\begin{figure}
    \centering
    \includegraphics[width=0.85\textwidth]{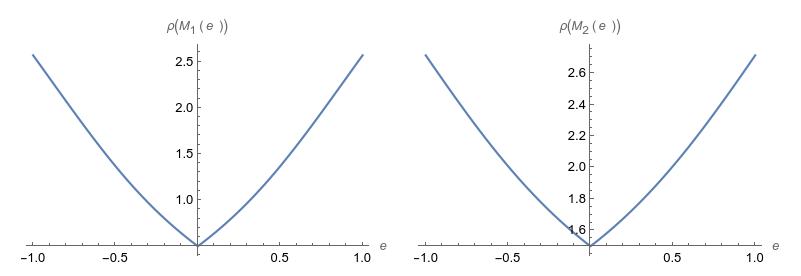}
    \caption{Values of the irrepresentability constants
      $\rho(M_1(e))$ and $\rho(M_2(e))$ for the two matrices from
      Figure~\ref{fig:examplegraph} plotted against the size of the
      off-diagonal entries $e$.  
      Left:   $\rho(M_1(e))$ where $\diag(M_1(e))=(-1/2,-1,-3/2)$.  Right: 
      $\rho(M_2(e))$  where $\diag(M_2(e))=(-3/2,-1,-1/2)$.}
    \label{fig:curveIrr}
\end{figure}

\begin{example}
Consider the graph $G=(V,E)$ in Figure~\ref{fig:examplegraph}, a path on 3 nodes.  For small $e\in\mathbb{R}$, we define two stable matrices $M_1(e)$ and $M_2(e)$ with support given by $G$.  We set their diagonals to $\diag(M_1(e)) = (-1/2,-1,-3/2)$ and $\diag(M_2(e)) = (-3/2,-1,-1/2)$, respectively, and set all nonzero off-diagonal entries equal to $e$.  Note that the diagonal of $M_2(e)$ is the reverse of the diagonal of $M_1(e)$. In Figure \ref{fig:curveIrr}, we plot the two irrepresentability constants $\rho(M_1(e))$ and $\rho(M_2(e))$ as functions of the off-diagonal value $e$.  We observe that irrepresentability holds in a neighborhood of $M_1(0)$, but not around $M_2(0)$. 
\end{example}

In the example, the order of diagonal entries is seen to impact whether irrepresentability holds near a diagonal matrix.  As we prove in the theorem below, this fact is not a coincidence, but rather a general phenomenon. Let $S\subseteq\{(i,j):1\le i,j\le p\}$ be a given support set.  We say that the irrepresentability condition for the support $S$ holds uniformly over a set $U\subset\text{Stab}_p$ if there exists $\alpha>0$ such that $\rho(M^{*})\le 1-\alpha$ for all $M^{*}\in U$ with support $S(M^{*})=S$.  By our convention, the edge set of a directed graph $G=(V,E)$ determines the support set $S_G=\{(j,i):i\to j\in E\}$.
\begin{theorem}
\label{thm:oderingdiagonal}
Let $G=(V,E)$ be a graph with $p$ nodes. Let  $M^0= \text{diag}( -d_1,\dots,-d_p )$ be a stable diagonal matrix. Then, the irrepresentability condition for support $S_G$ holds uniformly over a neighborhood of $M^0$ if and only if
\[
   d_i < d_j \ \ \text{for every edge} \ \ i \to j \in E.
\]
In particular, it is necessary that the graph $G$ is a DAG.
\end{theorem}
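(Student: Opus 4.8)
The plan is to compute everything explicitly at the diagonal matrix $M^0$, read off the irrepresentability constant there, and then propagate to a neighborhood by continuity, isolating $2$-cycles as the one degenerate case. First I would record the population objects at $M^0$. Since $M^0$ is symmetric, the Lyapunov equation $M^0\Sigma+\Sigma M^0+2I_p=0$ solves entrywise to the diagonal matrix $\Sigma^0=\diag(1/d_1,\dots,1/d_p)$; write $\sigma_i=1/d_i$. Expanding Lemma~\ref{lem:gammakronecker} gives the entrywise workhorse formula
\[
\Gamma(\Sigma)_{(i,j),(k,l)} = 2(\Sigma^2)_{ik}\delta_{jl} + 2\Sigma_{il}\Sigma_{jk},
\]
valid for any $\Sigma$. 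Evaluated at $\Sigma^0$, the row indexed by a pair $(i,j)$ has at most two nonzero entries, in columns $(i,j)$ and $(j,i)$, with values $2\sigma_i^2$ and $2\sigma_i\sigma_j$; thus the only off-diagonal coupling in $\Gamma(\Sigma^0)$ pairs $(i,j)$ with $(j,i)$.

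The key structural observation is that $\Gamma(\Sigma^0)_{SS}$ is \emph{diagonal} precisely when $S$ contains no pair $\{(i,j),(j,i)\}$ with $i\neq j$, i.e.\ when $G$ has no $2$-cycle. In that case $(\Gamma(\Sigma^0)_{SS})^{-1}$ is immediate, and computing $\Gamma(\Sigma^0)_{S^cS}(\Gamma(\Sigma^0)_{SS})^{-1}$ shows that each row $(a,b)\in S^c$ is either zero or, when the reverse index $(b,a)$ lies in $S$ (equivalently $b\to a\in E$), has a single entry equal to $\sigma_a/\sigma_b=d_b/d_a$. Taking the maximum absolute row sum yields the clean value
\[
\rho(M^0) = \max_{i\to j\in E,\, i\neq j}\frac{d_i}{d_j},
\]
so that $\rho(M^0)<1$ if and only if $d_i<d_j$ for every edge.

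To pass from the single matrix $M^0$ to a neighborhood I would invoke continuity: $\Sigma^*=\Sigma(M^*)$ depends smoothly on $M^*$ through the Lyapunov equation, $\Gamma$ is polynomial in $\Sigma$, and inversion of the $S$-block is continuous wherever that block is invertible. When $G$ has no $2$-cycle, $\Gamma(\Sigma^0)_{SS}$ is invertible, so $M^*\mapsto\rho(M^*)$ extends continuously to $M^0$, and uniform irrepresentability over a small neighborhood holds iff $\rho(M^0)<1$: if $\rho(M^0)<1$, a sufficiently small neighborhood keeps $\rho$ below $1$; if $\rho(M^0)\ge1$, there are support-$S_G$ matrices arbitrarily close to $M^0$ (perturb the edge entries to small nonzero values) with $\rho$ arbitrarily close to $\rho(M^0)\ge1$, defeating any gap $\alpha$. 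Together with the displayed formula this settles the equivalence for every graph without a $2$-cycle; note in particular that a directed cycle of length $\ge3$ forces $d_i\ge d_j$ on one of its edges, hence $\rho(M^0)\ge1$, consistent with the claimed failure.

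The remaining and genuinely delicate case is when $G$ contains a $2$-cycle on some $\{i,j\}$, where the ordering condition is vacuously impossible and I must show that uniform irrepresentability fails. The obstruction is that $\Gamma(\Sigma^0)_{SS}$ is now singular: the matrix $N$ supported on $(i,j),(j,i)$ for which $N\Sigma^0$ is antisymmetric has vectorization $v$ lying in $\ker\Gamma(\Sigma^0)$, and since $v$ is supported on $S$ one has both $\Gamma(\Sigma^0)_{SS}v_S=0$ and $\Gamma(\Sigma^0)_{S^cS}v_S=0$. The plan is a first-order perturbation analysis along a path $M^*=M^0+tB$, with $B$ on the off-diagonal edges of $S_G$: solving the Lyapunov equation to first order gives $\Sigma^*=\Sigma^0+t\Sigma^{(1)}+O(t^2)$, whence $\Gamma^*_{SS}$ acquires a smallest eigenvalue of order $t$ in the direction $v_S$ while $\Gamma^*_{S^cS}v_S$ is likewise of order $t$. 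Tracking these two rates shows $\Gamma^*_{S^cS}(\Gamma^*_{SS})^{-1}$ converges to a nonzero rank-one limit, whose $\infty$-norm I expect to equal $\max\{d_i/d_j,\,d_j/d_i\}\ge1$, so uniform irrepresentability fails. Establishing that this limiting constant is at least $1$ (and not merely finite) is the crux and the step I expect to be hardest, because the $1/t$ blow-up of $(\Gamma^*_{SS})^{-1}$ and the $O(t)$ vanishing of $\Gamma^*_{S^cS}v_S$ cancel exactly, leaving an $O(1)$ quantity that must be computed rather than bounded. Finally, the ``in particular'' clause is immediate: the edgewise inequalities $d_i<d_j$ constitute a strict topological order on $V$, which exists if and only if $G$ is a DAG.
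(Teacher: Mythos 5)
Your treatment of graphs without two-cycles is correct and essentially identical to the paper's: you compute $\Sigma^0=\diag(1/d_1,\dots,1/d_p)$, observe that $\Gamma(\Sigma^0)$ couples only the index pairs $(i,j)$ and $(j,i)$, note that $\Gamma(\Sigma^0)_{S_GS_G}$ is then diagonal, and read off that each nonzero row sum of $\Gamma(\Sigma^0)_{S_G^cS_G}(\Gamma(\Sigma^0)_{S_GS_G})^{-1}$ is a ratio of diagonal entries, so that the condition becomes $d_i<d_j$ along every edge; the product-around-a-cycle argument and the continuity argument passing from $M^0$ to a neighborhood are also exactly the paper's. (Do double-check your entrywise formula $\Gamma(\Sigma)_{(i,j),(k,l)}=2(\Sigma^2)_{ik}\delta_{jl}+2\Sigma_{il}\Sigma_{jk}$ against the paper's vec/Kronecker convention -- the paper's diagonal entry of $\Gamma^0$ at $(k,l)$ is $2/d_l^2$, not $2/d_k^2$ -- but your final ratio and conclusion come out right, so this is a bookkeeping issue, not a substantive one.)

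The genuine gap is the two-cycle case, which you yourself flag as unfinished. You correctly observe that $\Gamma(\Sigma^0)_{S_GS_G}$ is singular and, more importantly, that the kernel vector $v_S$ also satisfies $\Gamma(\Sigma^0)_{S_G^cS_G}v_S=0$, so that the failure of irrepresentability does not follow from the mere blow-up of $(\Gamma^*_{SS})^{-1}$; one must evaluate the $O(1)$ limit of the product $\Gamma^*_{S^cS}(\Gamma^*_{SS})^{-1}$ along a path $M^0+tB$ and show its $\opnorm{\cdot}{\infty}$-norm is at least $1$. You announce the expected value $\max\{d_i/d_j,\,d_j/d_i\}\ge 1$ but do not compute it, and this computation is precisely the load-bearing step of your route, so the proof is not complete as written. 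The paper avoids this entirely by a different (and much shorter) move: at the outset it characterizes uniform irrepresentability over a neighborhood of $M^0$ as being equivalent, by continuity, to the conjunction of (i) invertibility of $\Gamma^0_{S_GS_G}$ and (ii) $\tilde\rho_G(M^0)<1$, and then disposes of the two-cycle case by noting that the columns of $A(\Sigma^0)$ indexed by $(k,l)$ and $(l,k)$ are proportional, so (i) fails. Your perturbation analysis is a legitimate (arguably more careful) alternative route, and your observation about the exact $0\times\infty$ cancellation is a real subtlety that the paper's ``by continuity'' framing glosses over; but to submit this as a proof you must either carry out the first-order expansion of $\Sigma^*$, $\Gamma^*_{SS}$, and $\Gamma^*_{S^cS}$ and verify the claimed limit, or else adopt the paper's shortcut and justify why invertibility of $\Gamma^0_{S_GS_G}$ is necessary for the uniform condition.
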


We give a proof and an illustrating example in Appendix~\ref{sec:irrproofandexample}. 


\section{Simulation Studies}
\label{sec:robust}

In this section, we present simulation studies that provide insight into the performance of the Direct Lyapunov Lasso in seemingly unfavorable settings.  First, most drift matrices do not satisfy the irrepresentability condition;  compare Section~\ref{sec:SimStudiesweakvsnormal} in the Appendix. Second, while our assumption that $C$ is fixed up to a scalar multiple is made similarly in the related case where actual time series data is considered \citep{Gaiffas2019}, it is an assumption that may be overly simple for many applications. 
Nevertheless, our simulations suggest robustness of the Direct Lyapunov Lasso to the irrepresentability condition not being fulfilled and to mild misspecification of the volatility matrix $C$, where by robustness we mean that a part of signal is being learned correctly.

For the simulations in this section, we use a similar setting as in \cite{hansen2020}. Each stable matrix $M$ was generated with $M_{ij}=\omega_{ij} \epsilon_{ij}$ for $i \neq j$ and $M_{ii}=-\sum_{j \neq i} |M_{ij}|-|\epsilon_{ii}|$ where $\omega_{ij} \sim \text{Bernoulli}(d)$ and $\epsilon_{ij} \sim N(0,1)$. Unlike in \cite{hansen2020}, we consider four different choices for $C$. The label in brackets corresponds to the one used in Figure \ref{fig:variousC}.
\begin{itemize}
    \item[1)] We choose $C=2I_p$ (C\_ID).
    \item[2)] We choose $C$ with $C_{ii}=u_{ii}$ and $C_{ij}=0$ for $i \neq j$ and $u_{ii} \sim \mathrm{Unif}[0.5,4]$ (C\_Random\_Diag).
    \item[3)] We choose $C$ with $C_{ii}=u_{ii}$ and $C_{ij}=0$ for $i \neq j$ and $u_{ii} \sim \mathrm{Unif}[2,4]$\\ (C\_Random\_Min\_Diag).
    \item[4)] We choose $C_{ij}=\tilde{C}+\tilde{C}^{\top}$ for $i \neq j$ with $\tilde{C}_{ij}=\tilde{\omega}_{ij} \epsilon_{ij}$ for $i \neq j$ and $\tilde{C}_{ii}=0$ with $\tilde{\omega}_{ij} \sim \text{Bernoulli}(2/p)$ and $\epsilon_{ij} \sim N(0,1)$. Finally, we choose $C_{ii}=\sum_{j \neq i} |C_{ij}|+|\epsilon_{ii}|+0.5$ (C\_Random\_Full).
\end{itemize}

For each $k \in \lbrace 1,2,3,4 \rbrace$ and $p= \lbrace 10,15,20,25,30,40,50 \rbrace$,
the edge probability is set as $d=k/p$. For each choice of $C$, we generate 100 pairs of signals $(M,C)$. We generate $N = 1000$ observations from a multivariate Gaussian distribution with covariance matrix solving the Lyapunov equation for $(M,C)$. Note that $p^2>N$ for $p=\lbrace 40,50 \rbrace$ which corresponds to the high-dimensional setting. Then we apply the Direct Lyapunov Lasso with $C=2I_p$ for model selection. The results are calculated along the $\lambda$-grid:
 \begin{align*}
0< \frac{\lambda_{\max}}{10^4}= \lambda_{1} < \dots < \lambda_{100} = \lambda_{\max},
 \end{align*}
with $\lambda_{\max}$ being the minimal $\lambda$-value such that $M$ is diagonal. We use all evaluation metrics for support recovery in Definition~\ref{def:refinedmetrics}. The results are displayed in Figure \ref{fig:variousC}.

\begin{figure}
    \centering
    \includegraphics[width=0.85\textwidth]{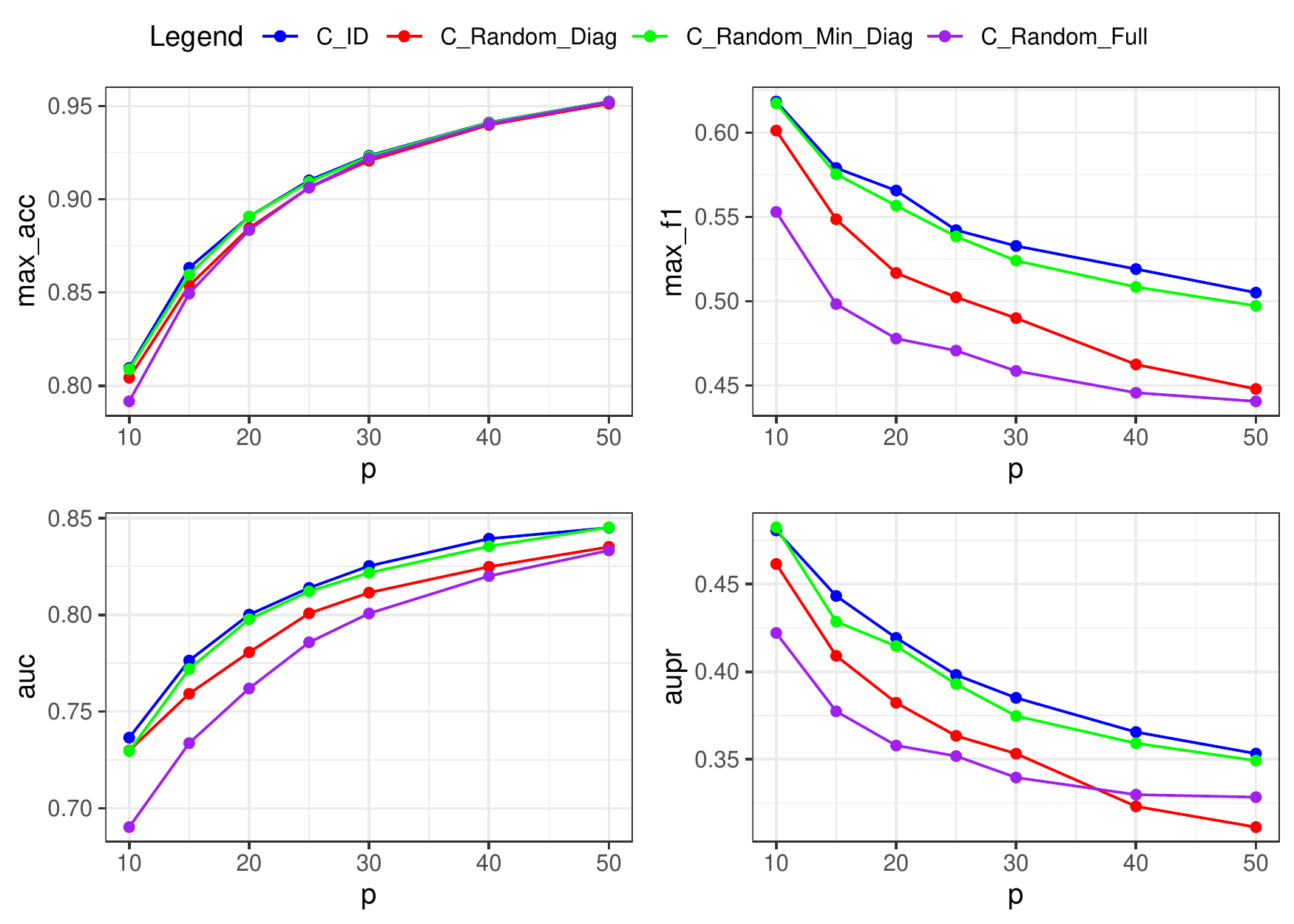}
      \caption{Four evaluation metrics measuring support recovery for the Direct Lyapunov Lasso with $C=2I_p$ where data has been generated using the choices 1)--4) for $C$: 1) is C\_ID, 2) is C\_Random\_Diag, 3) is C\_Random\_Min\_Diag, 4) is C\_Random\_Full.}
      \label{fig:variousC}
\end{figure}

Choice 1) for $C$ is used when applying Direct Lyapunov Lasso for model selection. Therefore, it is natural to assume that this choice should lead to the best results. Choices 2) and 3) allow for variability on the diagonal. The second choice allows for larger differences $(\mathrm{Unif}[0.5,4])$ in the size of the diagonal entries, while the third choice is more conservative $(\mathrm{Unif}[2,4])$. Choices 1) and 3) perform best in our simulations. We observe that there are few differences in all metrics among the choices between choice 1) and choice 3), indicating that the Direct Lyapunov Lasso with $C=2 I_p$ possesses certain robustness to the exact diagonal matrix $C$ of the data generating model. This is true for all $p \in \lbrace 10,15,20,25,30,40,50 \rbrace$. For choice 2), we observe that results in all metrics except maximum accuracy fall with increasing $p$. For $p=40$ and especially for $p=50$, the results for all metrics are similar to choice 4). Choice 4) allows for data generating models for which $C$ is no longer diagonal. For this choice, the worst results are to be expected as the matrix $C$ used for data generation is much different from the one used for estimation.
Another interesting point revealed by the simulations is that although the irrepresentability condition is not satisfied in almost any of the signals, it is still possible to get estimates that recover much of the support of the drift matrix.

\section{Real World Data Example}
\label{sec:realworlddata}

The dataset collected and first analyzed by \cite{Sachs2005} has become a test bed for graphical model selection algorithms. Some recent examples are the review paper of causal discovery methods based on graphical models by \cite{Glymour2019}, the application of classical structure equation models allowing for cycles by \cite{amendola2020} or the application of Lyapunov models and a specific model selection technique by \cite{hansen2020}. The dataset consists of flow cytometry measurements of 11 phosphorylated proteins and phospholipids in human T-cells captured under different experimental conditions, resulting in 14 independent datasets of varying sizes ($n=707$ to $n=927$). When flow cytometry is applied, cells are destroyed during the measurement process and hence the measurements are collected at one point in time. Each sample consists of quantitative and simultaneous measurements of the 11 phosphorylated proteins and phospholipids of single cells. The way in which the dataset was collected matches the motivation for considering
Lyapunov models.

We apply the Direct Lyapunov Lasso \eqref{eq:Frobeniuseq} with $C=2I_p$ and with an adaptation of the extended Bayesian Information Criterion (BIC) criterion for tuning parameter selection \citep{chen2008,foygel:drton:2010}. First, we standardize every column of the data by calculating $X^{std}_{\cdot,i}= (X_{\cdot,i}-\mu) / \sigma$ where $\mu$ is the mean and $\sigma$ the standard deviation of $X_{\cdot,i}$. We apply the Direct Lyapunov Lasso to obtain estimates along the regularization path that is the logarithmic sequence 
 \begin{align*}
0< \frac{\lambda_{\max}}{10^4}= \lambda_{1} < \dots < \lambda_{100} = \lambda_{\max},
 \end{align*}
where $\lambda_{\max}$ is the minimal $\lambda$-value such that the estimate is diagonal. Extracting the non-zero structure, each estimate $\hat{M}_{j}$ defines a directed graph $G_{j}$ 
for $j=1,\dots,100$. To decide which graph to select, we use the extended BIC. We first minimize the two times negative Gaussian log-likelihood 
\begin{align}
L(M)=n \big[\log \det\big(\Sigma(M,2I_p)\big)+\text{tr}\big(\hat{\Sigma}(\Sigma(M,2I_p))^{-1}\big)\big]
\end{align}
for all 100 models obtained by restricting the support of $M$ to $G_j$, $j=1,\dots,100$.
We denote the minima by $\hat{L}_{1},\dots,\hat{L}_{100}$ and substitute these values into
\begin{align}
\label{eq:extendedBIC}
        EBIC_{\gamma}(G_j)= (|E_j|+p) \log n +4 \gamma |E_j| \log p + \hat{L}_j,
\end{align}
where $E_j$ is the edge set of $G_j$.  Selecting the graph with the lowest score yields the Lyapunov model selected by the extended BIC criterion. 

In Figure~\ref{fig:Sachsdata7}a) we present the graph estimate for the protein signaling network using dataset 7. The datasets in \cite{Sachs2005} also contain a graph that shows the conventionally accepted signaling molecule interactions to which we refer as ``ground truth'' knowing about a certain ambiguity for some connections \cite[Section 2]{Ramsey2018FASKWI}. A visualization of the ground truth is given in Figure~\ref{fig:Sachsdata7}b).
The estimate has 17 edges and 11 connections when counting the 2-cycles only once, while the ground truth has 20 edges with no 2-cycles.  The red edges are the edges that are correctly recovered by the estimate. The orange edges are the edges where the reversed edges are present in the ground truth. The black edges are additional edges that are not present in the ground truth. Among the correctly estimated connections are the direct enzyme-substrate relationships Raf $\to$ Mek and PIP3 $\to$ Pcl$-\gamma$. The connections Pcl$-\gamma$ $\to$ PIP2 and PKA $\to$ Raf are missing in Figure ~\ref{fig:Sachsdata7}, but the Pcl$-\gamma$ $\to$ PIP3 $\to$ PIP2 and Pcl$-\gamma$ $\to$ PIP3 $\to$ PIP2 pathways suggest the presence of these interactions. Only the connection Mek $\to$ Erk is not present at all. Overall, the Direct Lyapunov Lasso with extended BIC is an intuitive and easy-to-implement method that produces a sparse estimate with most edges (or their reverse) present in the ground truth and even some additional edges such as Akt $\to$ Raf can be interpreted as connecting pieces of meaningful pathways.

\begin{figure}
    \centering\def\figmod{.8}
a)
\resizebox{6.75cm}{6.75cm}{%
\begin{tikzpicture}[->,every node/.style={circle,draw},line width=1pt, node distance=1.25cm]
  \node (PIP3) at (5,0)     {PIP3};
  \node (PIP2) at (4.206268, 2.703204) {PIP2};
  \node (Plcg) at (2.077075, 4.548160) {Plcg};
  \node (Mek) at (-0.7115742,  4.9491072) {Mek};
  \node (Raf) at (-3.274304,  3.778748) {Raf};
  \node (Jnk) at (-4.797465,  1.408663) {Jnk};
  \node (P38) at (-4.797465, -1.408663) {P38};
  \node (PKC) at (-3.274304, -3.778748) {PKC};
  \node (PKA) at (-0.7115742, -4.9491072) {PKA};
  \node (Akt) at (2.077075, -4.548160) {Akt};
  \node (Erk) at (4.206268, -2.703204) {Erk};
  
\foreach \from/\to in  {Raf/Mek,Mek/Raf,Mek/P38,Plcg/Mek,Plcg/PIP3,PIP3/Plcg,PIP3/PIP2,PIP3/PKA,Erk/Akt,Akt/Raf,Akt/Erk,Akt/PKA,PKA/Akt,PKC/P38,P38/Mek,P38/PKC,Jnk/PKC}
\draw (\from) -- (\to);  
\foreach \from/\to in {Raf/Mek,PIP3/Plcg,PIP3/PIP2,PKA/Akt,PIP3/PKA,PKC/P38}
\draw[color=red] (\from) -- (\to); 
\foreach \from/\to in {Jnk/PKC}
\draw[color=orange] (\from) -- (\to); 
\end{tikzpicture}
}\hfill 
b)
\resizebox{6.75cm}{6.75cm}{%
\begin{tikzpicture}[->,every node/.style={circle,draw},line width=1pt, node distance=1.25cm]
  \node (PIP3) at (5,0)     {PIP3};
  \node (PIP2) at (4.206268, 2.703204) {PIP2};
  \node (Plcg) at (2.077075, 4.548160) {Plcg};
  \node (Mek) at (-0.7115742,  4.9491072) {Mek};
  \node (Raf) at (-3.274304,  3.778748) {Raf};
  \node (Jnk) at (-4.797465,  1.408663) {Jnk};
  \node (P38) at (-4.797465, -1.408663) {P38};
  \node (PKC) at (-3.274304, -3.778748) {PKC};
  \node (PKA) at (-0.7115742, -4.9491072) {PKA};
  \node (Akt) at (2.077075, -4.548160) {Akt};
  \node (Erk) at (4.206268, -2.703204) {Erk};
  
\foreach \from/\to in  {PKA/Jnk,PKC/Jnk,PKA/P38,PKC/P38,PKA/Akt,PIP3/Akt,PKA/Erk,Mek/Erk,PKA/Mek,PKC/Mek,Raf/Mek,
PKA/Raf,PKC/Raf,PIP2/PKC,Plcg/PKC,PIP3/PIP2,Plcg/PIP2,PIP3/Plcg,PIP3/PKA,PKA/PIP3}
\draw (\from) -- (\to);  
\end{tikzpicture}
}
\caption{a) Estimated Sachs Network using the Direct Lyapunov Lasso and the EBIC criterion with $\gamma=1$ for scoring (Dataset 7).  b) Ground truth (consensus) network from 
 \cite{Sachs2005}.}
\label{fig:Sachsdata7}
\end{figure}

\section{Conclusion}
\label{sec:conclusion}

We investigated the model selection properties of the Direct Lyapunov Lasso when applied to data distributed according to the graphical continuous Lyapunov model. Although the optimization problem that the Direct Lyapunov Lasso solves is similar to the lasso-penalized linear regression objective, there are several surprising differences. We established a reasonable bound on the sample complexity by carefully investigating the Hessian matrix whose elements are sums of $p$ products of covariances. The irrepresentability condition is more subtle under the Lyapunov model than it is in the linear regression setting. We formulated conditions under which the irrepresentability condition is guaranteed to hold for DAGs based on the topological ordering of the nodes. Despite the irrepresentability condition rarely being fulfilled for randomly selected drift matrices and the problem of misspecification of the volatility matrix when applying the Direct Lyapunov Lasso, we showed that the method is rather robust and is able to detect key features of sparse structures also in seemingly unfavorable settings. Similarly, the combination of Direct Lyapunov Lasso and extended BIC is quite intuitive and easy-to-implement, but still manages to recover important structures of a protein-signaling network purely based on observational data.

\subsection{Acknowledgements}

This project has received funding from the European Research Council (ERC) under the European Union’s Horizon 2020 research and innovation programme (grant agreement No 883818).  Philipp Dettling further acknowledges support from the Hanns-Seidel Foundation.

\bigskip 

\noindent{\bf Authors' addresses:}
\smallskip
\small 

\noindent Philipp Dettling,
Technical University of Munich
\hfill {\tt philipp.dettling@tum.de}

\noindent Mathias Drton, 
Technical University of Munich
\hfill {\tt mathias.drton@tum.de}

\noindent Mladen Kolar,
Booth School of Business \hfill {\tt mladen.kolar@chicagobooth.edu}\\
University of Chicago

\bibliography{arxiv_lyapunov}

\begin{thebibliography}{31}
\providecommand{\natexlab}[1]{#1}
\providecommand{\url}[1]{\texttt{#1}}
\expandafter\ifx\csname urlstyle\endcsname\relax
  \providecommand{\doi}[1]{doi: #1}\else
  \providecommand{\doi}{doi: \begingroup \urlstyle{rm}\Url}\fi

\bibitem[Am\'{e}ndola et~al.(2020)Am\'{e}ndola, Dettling, Drton, Onori, and Wu]{amendola2020}
Carlos Am\'{e}ndola, Philipp Dettling, Mathias Drton, Federica Onori, and Jun Wu.
\newblock Structure learning for cyclic linear causal models.
\newblock In \emph{Proceedings of the 36th Conference on Uncertainty in Artificial Intelligence (UAI)}, pages 999--1008, 2020.

\bibitem[Bernstein(2018)]{bernstein2016}
Dennis~S. Bernstein.
\newblock \emph{Scalar, vector, and matrix mathematics}.
\newblock Princeton University Press, Princeton, NJ, 2018.

\bibitem[Bongers and Mooij(2018)]{bongers2018}
Stephan Bongers and Joris~M. Mooij.
\newblock From random differential equations to structural causal models: the stochastic case.
\newblock \emph{arXiv.org preprint}, 1803.08784, 2018.

\bibitem[Bongers et~al.(2021)Bongers, Forr\'{e}, Peters, and Mooij]{bongers2021}
Stephan Bongers, Patrick Forr\'{e}, Jonas Peters, and Joris~M. Mooij.
\newblock Foundations of structural causal models with cycles and latent variables.
\newblock \emph{Ann. Statist.}, 49\penalty0 (5):\penalty0 2885--2915, 2021.

\bibitem[Chen and Chen(2008)]{chen2008}
Jiahua Chen and Zehua Chen.
\newblock {Extended Bayesian information criteria for model selection with large model spaces}.
\newblock \emph{Biometrika}, 95\penalty0 (3):\penalty0 759--771, 09 2008.

\bibitem[Dettling et~al.(2022)Dettling, Homs, Améndola, Drton, and Hansen]{dettling2022id}
Philipp Dettling, Roser Homs, Carlos Améndola, Mathias Drton, and Niels~Richard Hansen.
\newblock Identifiability in continuous lyapunov models.
\newblock \emph{arXiv preprint: arXiv 2209.03835}, 2022.

\bibitem[Drton et~al.(2019)Drton, Fox, and Wang]{drton:fox:wang2019}
Mathias Drton, Christopher Fox, and Y.~Samuel Wang.
\newblock Computation of maximum likelihood estimates in cyclic structural equation models.
\newblock \emph{Ann. Statist.}, 47\penalty0 (2):\penalty0 663--690, 2019.

\bibitem[Fisher(1970)]{fischer1970}
Franklin~M Fisher.
\newblock A correspondence principle for simultaneous equation models.
\newblock \emph{Econometrica}, 38\penalty0 (1):\penalty0 73--92, 1970.

\bibitem[Fitch(2019)]{katie2019}
Katherine~E. Fitch.
\newblock Learning directed graphical models from {G}aussian data.
\newblock \emph{arXiv preprint: arXiv 1906.08050}, 2019.

\bibitem[Foygel and Drton(2010)]{foygel:drton:2010}
Rina Foygel and Mathias Drton.
\newblock Extended {B}ayesian information criteria for {G}aussian graphical models.
\newblock In \emph{Advances in Neural Information Processing Systems}, volume~23. Curran Associates, Inc., 2010.

\bibitem[Friedman et~al.(2010)Friedman, Hastie, and Tibshirani]{glmnet2010}
Jerome Friedman, Trevor Hastie, and Robert Tibshirani.
\newblock Regularization paths for generalized linear models via coordinate descent.
\newblock \emph{Journal of Statistical Software}, 33\penalty0 (1):\penalty0 1--22, 2010.

\bibitem[Gaïffas and Matulewicz(2019)]{Gaiffas2019}
Stéphane Gaïffas and Gustaw Matulewicz.
\newblock Sparse inference of the drift of a high-dimensional {O}rnstein–{U}hlenbeck process.
\newblock \emph{Journal of Multivariate Analysis}, 169:\penalty0 1--20, 2019.

\bibitem[Glymour et~al.(2019)Glymour, Zhang, and Spirtes]{Glymour2019}
Clark Glymour, Kun Zhang, and Peter Spirtes.
\newblock Review of causal discovery methods based on graphical models.
\newblock \emph{Frontiers in Genetics}, 10, 06 2019.

\bibitem[Lin et~al.(2016)Lin, Drton, and Shojaie]{lin2015estimation}
Lina Lin, Mathias Drton, and Ali Shojaie.
\newblock Estimation of high-dimensional graphical models using regularized score matching.
\newblock \emph{Electron. J. Stat.}, 10\penalty0 (1):\penalty0 806--854, 2016.

\bibitem[Maathuis et~al.(2019)Maathuis, Drton, Lauritzen, and Wainwright]{maathuis2019}
Marloes Maathuis, Mathias Drton, Steffen Lauritzen, and Martin Wainwright, editors.
\newblock \emph{Handbook of graphical models}.
\newblock Chapman \& Hall/CRC Handbooks of Modern Statistical Methods. CRC Press, Boca Raton, FL, 2019.

\bibitem[Mogensen et~al.(2018)Mogensen, Malinsky, and Hansen]{Mogensen2018CausalLF}
S{\o}ren~Wengel Mogensen, Daniel Malinsky, and Niels~Richard Hansen.
\newblock Causal learning for partially observed stochastic dynamical systems.
\newblock In \emph{Proceedings of the 34th conference on Uncertainty in Artificial Intelligence (UAI)}, pages 350--360, 2018.

\bibitem[Mooij et~al.(2013)Mooij, Janzing, and Sch\"{o}lkopf]{mooij2013}
Joris~M. Mooij, Dominik Janzing, and Bernhard Sch\"{o}lkopf.
\newblock From ordinary differential equations to structural causal models: The deterministic case.
\newblock In \emph{Proceedings of the 29th Conference on Uncertainty in Artificial Intelligence (UAI)}, pages 440--448, 2013.

\bibitem[{Noferini} and {Poloni}(2021)]{Noferini2021}
Vanni {Noferini} and Federico {Poloni}.
\newblock {Nearest $\Omega$-stable matrix via Riemannian optimization}.
\newblock \emph{Numerische Mathematik}, 10\penalty0 (1):\penalty0 arXiv:2002.07052, 2021.

\bibitem[Pearl(2009)]{pearl:2009}
Judea Pearl.
\newblock \emph{Causality}.
\newblock Cambridge University Press, Cambridge, second edition, 2009.

\bibitem[Peters and B\"{u}hlmann(2014)]{Peters2014}
J.~Peters and P.~B\"{u}hlmann.
\newblock Identifiability of {G}aussian structural equation models with equal error variances.
\newblock \emph{Biometrika}, 101\penalty0 (1):\penalty0 219--228, 2014.

\bibitem[Peters et~al.(2017)Peters, Janzing, and Sch\"{o}lkopf]{peters2017}
Jonas Peters, Dominik Janzing, and Bernhard Sch\"{o}lkopf.
\newblock \emph{Elements of causal inference}.
\newblock Adaptive Computation and Machine Learning. MIT Press, Cambridge, MA, 2017.

\bibitem[Ramsey and Andrews(2018)]{Ramsey2018FASKWI}
Joseph Ramsey and Bryan Andrews.
\newblock Fask with interventional knowledge recovers edges from the sachs model.
\newblock \emph{ArXiv}, abs/1805.03108, 2018.

\bibitem[Ravikumar et~al.(2011)Ravikumar, Wainwright, Raskutti, and Yu]{ravikumar2008highdimensional}
Pradeep Ravikumar, Martin~J. Wainwright, Garvesh Raskutti, and Bin Yu.
\newblock High-dimensional covariance estimation by minimizing {$\ell_1$}-penalized log-determinant divergence.
\newblock \emph{Electron. J. Stat.}, 5:\penalty0 935--980, 2011.

\bibitem[Richardson(1996)]{richardson1996}
Thomas Richardson.
\newblock A discovery algorithm for directed cyclic graphs.
\newblock In \emph{Uncertainty in Artificial Intelligence ({P}ortland, {OR}, 1996)}, pages 454--461. Morgan Kaufmann, San Francisco, CA, 1996.

\bibitem[Sachs et~al.(2005)Sachs, Perez, Pe{\textquoteright}er, Lauffenburger, and Nolan]{Sachs2005}
Karen Sachs, Omar Perez, Dana Pe{\textquoteright}er, Douglas~A. Lauffenburger, and Garry~P. Nolan.
\newblock Causal protein-signaling networks derived from multiparameter single-cell data.
\newblock \emph{Science}, 308\penalty0 (5721):\penalty0 523--529, 2005.

\bibitem[Spirtes et~al.(2000)Spirtes, Glymour, and Scheines]{spirtes:2000}
Peter Spirtes, Clark Glymour, and Richard Scheines.
\newblock \emph{Causation, prediction, and search}.
\newblock Adaptive Computation and Machine Learning. MIT Press, Cambridge, MA, second edition, 2000.

\bibitem[Varando and Hansen(2020)]{hansen2020}
Gherardo Varando and Niels~Richard Hansen.
\newblock Graphical continuous {L}yapunov models.
\newblock In \emph{Proceedings of the 36th Conference on Uncertainty in Artificial Intelligence (UAI)}, pages 989--998, 2020.

\bibitem[{Wainwright}(2009)]{wainwright2009}
Martin~J. {Wainwright}.
\newblock Sharp thresholds for high-dimensional and noisy sparsity recovery using $\ell _{1}$-constrained quadratic programming (lasso).
\newblock \emph{IEEE Transactions on Information Theory}, 55\penalty0 (5):\penalty0 2183--2202, 2009.

\bibitem[Wainwright(2019)]{wainwright2009b}
Martin~J. Wainwright.
\newblock \emph{High-dimensional statistics}, volume~48 of \emph{Cambridge Series in Statistical and Probabilistic Mathematics}.
\newblock Cambridge University Press, Cambridge, 2019.

\bibitem[{Wolfram Research, Inc.}(2022)]{math2022}
{Wolfram Research, Inc.}
\newblock Mathematica version 13.2, 2022.
\newblock Champaign, IL, 2022.

\bibitem[Young et~al.(2019)Young, Yeung, and Raftery]{young2019}
William~Chad Young, Ka~Yee Yeung, and Adrian~E Raftery.
\newblock Identifying dynamical time series model parameters from equilibrium samples, with application to gene regulatory networks.
\newblock \emph{Statistical Modelling}, 19\penalty0 (4):\penalty0 444--465, 2019.

\end{thebibliography}


\appendix

\newpage

\section{Supplementary Information for Example~\ref{exa:pathcycle}}
\label{sec:appendixsupplementexample}

Below we give the exact choices of stable drift matrices and the estimation procedure for Example~\ref{exa:pathcycle}.\\

Let $G_1$ be the path from 1 to 5, and let $G_2$ be the 5-cycle obtained by adding the edge $5\to 1$; see Figure~\ref{tab:intro}. 
For $G_1$ we define a (well-conditioned) stable matrix $M_1^{*}$ by setting the diagonal to $(-2,-3,-4,-5,-6)$ and the four nonzero subdiagonal entries to $0.65$. For $G_2$, we consider two cases. In the first case, we add the entry $m_{15}=0.65$ to $M_1^{*}$ to obtain the matrix $M_2^{*}$.  We then draw 100 samples of size $n= 100, 200, 500, 1000,5000, 10^4,10^5, \infty$ from $N(0,\Sigma_j^{*})$ for $j=1,2$, where $\Sigma_{j}^{*}$ is the covariance matrix obtained from $M_j^{*}$.  When  $n=\infty$, the population covariance matrices are taken as input to the method. In the second case, we generate 100 stable matrices $M_{2,1}^{*}, \dots, M_{2,100}^{*}$ from $M_1^{*}$ by selecting 100 entries $m_{15}$ according to a uniform distribution on $[0.5, 1]$. Let $\Sigma_{2,1}^{*},\dots,\Sigma_{2,100}^{*}$ be the corresponding equilibrium covariance matrices. In the second case, we generate one sample from $N(0,\Sigma_j^{*})$, $j=1,(2,1),\dots,(2,100)$ for each of the sample sizes given above. Direct Lyapunov Lasso is used for support recovery, with the penalty parameter $\lambda$ chosen on a grid $\lambda_{1}=\lambda_{\max}/10^{4},\dots,\lambda_{100}=\lambda_{\max}$ that is equidistant on the log-scale. The value $\lambda_{\max}$ is the minimal $\lambda$-value such that the estimate is diagonal. To implement the Direct Lyapunov Lasso, we use the \texttt{R} package \texttt{glmnet}, which runs a coordinate descent algorithm for fitting the Lasso, see \cite{glmnet2010}. For each data set, we calculate the maximum accuracy, the maximum $\text{F}_1$-score and the area under the ROC curve. We give the details for the metrics in Definition~\ref{def:refinedmetrics}.

\section{Volatility Matrix and Identifiability}
\label{app:lyapunoveqandID}

In this section, we provide more insight into the assumption that the volatility matrix $C$ is known. Based on this assumption, the question of parameter identifiability is solved for most graphs. This property of a model is necessary to derive consistency results as we do in this work. We give the basic idea of parameter identifiability and provide a reference.
\begin{remark}
\label{rem:CID}
Purely from the covariance matrix $\Sigma$, it is not possible to determine whether the true parameter pair is $(M,C)$ or $(\gamma M, \gamma C)$ with $\gamma >0$, since the Lyapunov equation is scaling invariant. Assuming that $C$ is known is the best we can do when $C$ is unknown up to a multiplicative scalar $\gamma>0$. This accommodates, in particular, the homoscedastic case with $C=\gamma\, I_p$, where $I_p$ denotes the identity matrix. From a practical perspective, the case $C=2I_p$ is the most useful, as it mirrors the equal variance assumption for structural equation models, see \cite{Peters2014}. The assumption that $C$ is known might seem to be restrictive, but it is also made in related work on estimation of the drift matrix for data collected from a single time series \cite{Gaiffas2019}. Moreover, the above-mentioned identifiability theory needs to be further developed to adequately address the case where $C$ is unknown. However, we do think that this should be the subject of further research.
\end{remark}

\begin{remark}
\label{rem:identifiability}
It is evident that the equilibrium distribution of the observations does not uniquely determine the pair of drift and volatility matrices $(M,C)$, as discussed. Specifically, if $(M,C)$ solves the Lyapunov equation, then any scalar multiple of $(M,C)$ also solves the equation. As scaling does not alter the support of the drift matrix $M$, we address this issue by assuming that $C$ is fully known.
Even after reducing to the case of a known volatility matrix $C$, the drift matrix $M$ is not identifiable without exploiting further structure, such as sparsity. Indeed, the Lyapunov equation is a symmetric matrix equation with $(p+1)p/2$ individual equations, whereas $M$ contains $p^2$ unknown parameters. However, $M$ becomes identifiable when it is known to be suitably sparse. For example, it can be shown that the Lyapunov equation for a given $C$ never has two different lower-triangular solutions. In particular, the matrix $M$ from Example~\ref{exa:graphM} can always be uniquely recovered from the Lyapunov equation when its graph/support is known. The work by \cite{dettling2022id} proves that unique recovery of $M$ is always possible for graphs that do not contain cycles of length two. Many graphs with two-cycles permit almost sure unique recovery when the sparse entries of the drift matrix are randomly selected according to a continuous distribution, although here a concise sufficient condition has not been found.
\end{remark}

\section{Display of the Matrix $A(\Sigma)$}
\label{sec:GramAppendix}

In this section, we present a display of the matrix $A(\Sigma)$.
\begin{example}
\label{exa:ASigmareduced}
When $p=3$, the matrix $A(\Sigma)$ is a $9\times 9$ matrix and has the form
\begin{align*}
\begin{blockarray}{cccccccccc}
&(1,1) & (2,1) & (3,1) & (1,2) & (2,2) & (3,2) & (1,3) & (2,3) & (3,3) \\
\begin{block}{c(ccccccccc)}
            (1,1)&2 \Sigma_{11} &0&0&2 \Sigma_{12} & 0 &0 & 2\Sigma_{13} & 0 & 0 \\
            (1,2)&\Sigma_{21} & \Sigma_{11} &0& \Sigma_{22}  & \Sigma_{12} &0& \Sigma_{23} & \Sigma_{13} &0\\
            (1,3)&\Sigma_{31} &0&\Sigma_{11}& \Sigma_{23} & 0 & \Sigma_{12}& \Sigma_{33} & 0 & \Sigma_{13}\\
            \textit{(2,1)}&\Sigma_{21} & \Sigma_{11} &0& \Sigma_{22}  & \Sigma_{12} &0& \Sigma_{23} & \Sigma_{13} &0\\
            (2,2)&0 &2\Sigma_{21}& 0& 0 & 2 \Sigma_{22} &0& 0 & 2\Sigma_{23} & 0 \\
            (2,3)&0 & \Sigma_{31}&\Sigma_{21}& 0 & \Sigma_{23}& \Sigma_{22}   & 0 & \Sigma_{33} & \Sigma_{23}\\
            \textit{(3,1)}&\Sigma_{31} &0&\Sigma_{11}& \Sigma_{23} & 0 & \Sigma_{12}& \Sigma_{33} & 0 & \Sigma_{13}\\
            \textit{(3,2)}&0 & \Sigma_{31}&\Sigma_{21}& 0 & \Sigma_{23}& \Sigma_{22}   & 0 & \Sigma_{33} & \Sigma_{23}\\
            (3,3)&0 & 0&2\Sigma_{31}& 0 & 0&2\Sigma_{23} & 0 & 0 & 2 \Sigma_{33}\\
\end{block}
\end{blockarray}.
\end{align*}
Rows with an italicized index correspond to strictly upper triangular entries in the Lyapunov equation from~\eqref{eq:lyapunoveq}. 
\end{example}


\section{Deterministic Result on Support Recovery}
\label{sec:supprec}

In this section, we provide the deterministic result that Theorem~\ref{thm:probsupport} is based on. We adapt Theorem 1 in \cite{lin2015estimation} to arrive at our deterministic result. There are a few differences that we solve, most of the steps are exactly the same, however. The underlying construction is the Primal-Dual-Witness (PDW) method \citep{wainwright2009}.
\begin{theorem}
\label{thm:deterministicres}
Let $M^{*} \in \text{Stab}_p$ be the true drift matrix, and let $S$ be its support. Assume that $\sGamma_{SS}$ is invertible and that the irrepresentability condition 
\begin{equation}
           \opnorm{\Gamma^{*}_{S^cS} (\Gamma^{*}_{SS})^{-1}}{\infty}<1-\alpha
\end{equation}
holds with parameter $\alpha \in (0,1]$. Furthermore, assume that $\hat\Gamma$ is a matrix such that 
\begin{align*}
\opnorm{(\DGamma)_{\cdot S}}{\infty}
<\epsilon_{1}, \qquad \norm{\Delta_g}_{\infty}< \epsilon_{2},
\end{align*}
with $\epsilon_{1} \leq \alpha/(6c_{\Gamma^{*}}).$  If 
\begin{align*}
    \lambda > \frac{3(2-\alpha)}{\alpha} \max \lbrace c_{M^{*}},\epsilon_{1},\epsilon_{2} \rbrace ,
\end{align*}
then the following statements hold:
\begin{itemize}
    \item[a)] The LSGE $\hat{M}$ is unique, has its support included in the true support $(\hat{S} \subseteq S)$, and satisfies 
    \begin{align*}
        ||\hat{M}-M^{*}||_{\infty} < \frac{2c_{\Gamma^{*}}}{2-\alpha} \lambda.
    \end{align*}
    \item[b)] If 
    \begin{align*}
        \underset{\underset{(j,k) \in S}{1\leq j <k \leq m}}{\min}|M^{*}_{jk}|>\frac{2c_{\Gamma^{*}}}{2-\alpha}\lambda,
    \end{align*}
    then $\hat{S}=S$ and $\text{sign}(\hat{M}_{jk})=\text{sign}(M^{*}_{jk})$ for all $(j,k) \in S$.
\end{itemize}
\end{theorem}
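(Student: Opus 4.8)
The plan is to run the Primal--Dual Witness (PDW) construction of \cite{wainwright2009}, adapted to the vectorized objective \eqref{eq:lyapunovoptim}, following the template of \cite{lin2015estimation}. Write $\beta=\mathrm{vec}(M)$, $\hat\beta=\vhM$ and $\beta^*=\vsM$, so that the smooth part of \eqref{eq:lyapunovoptim} has gradient $\hGamma\beta-\hg$ and subgradient optimality reads $\hGamma\hat\beta-\hg+\lambda\hat z=0$ for some $\hat z\in\partial\norm{\hat\beta}_1$. The first fact I would record is the \emph{population normal equation}: since $M^*$ makes the Lyapunov residual $A(\sSigma)\beta^*+\mathrm{vec}(C)$ vanish exactly (equation~\eqref{eq:veclyapunov1}), and $\sGamma=A(\sSigma)^\top A(\sSigma)$, the smooth gradient vanishes at $\beta^*$, i.e. $\sGamma\beta^*=\sg$. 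This identity is the hinge that lets every error term be written through $\DGamma$ and $\Dg$.

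Next I would solve the oracle (restricted) problem, minimizing \eqref{eq:lyapunovoptim} subject to $\beta_{S^c}=0$, giving $\hat\beta_S=\hGamma_{SS}^{-1}(\hg_S-\lambda\hat z_S)$ with $\hat z_S=\sign(\hat\beta_S)$, and extending by $\hat\beta_{S^c}=0$. For this to be well defined I must check $\hGamma_{SS}$ is invertible: since $\opnorm{(\DGamma)_{SS}}{\infty}\le\opnorm{(\DGamma)_{\cdot S}}{\infty}<\epsilon_1$ and $\epsilon_1\le\alpha/(6c_{\Gamma^*})$, a Neumann-series bound yields invertibility together with $\opnorm{\hGamma_{SS}^{-1}}{\infty}\le c_{\Gamma^*}/(1-c_{\Gamma^*}\epsilon_1)\le 6c_{\Gamma^*}/(6-\alpha)$. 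Substituting $\sg=\sGamma\beta^*$ produces the clean identity $\hat\beta_S-\beta^*_S=\hGamma_{SS}^{-1}(R_S-\lambda\hat z_S)$ with remainder $R=\Dg-\DGamma\beta^*$, so that $\norm{R_S}_\infty\le\epsilon_2+c_{M^*}\epsilon_1$. The regularization threshold $\lambda>\frac{3(2-\alpha)}{\alpha}\max\{c_{M^*},\epsilon_1,\epsilon_2\}$ is calibrated precisely so that $\norm{R_S}_\infty<\frac{\alpha}{3(2-\alpha)}\lambda$; combining this with the operator-norm bound gives $\opnorm{\hGamma_{SS}^{-1}}{\infty}(\norm{R_S}_\infty+\lambda)<\frac{2c_{\Gamma^*}}{2-\alpha}\lambda$, which is exactly the $\ell_\infty$ error bound of part (a).

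The heart of the argument, and the step I expect to be the main obstacle, is verifying strict dual feasibility $\norm{\hat z_{S^c}}_\infty<1$, since this is where the irrepresentability condition enters and where the Lyapunov model departs from ordinary lasso. Eliminating $\hat\beta_S$ from the two stationarity blocks gives $\hat z_{S^c}=\hGamma_{S^cS}\hGamma_{SS}^{-1}\hat z_S+\frac{1}{\lambda}\bigl(R_{S^c}-\hGamma_{S^cS}\hGamma_{SS}^{-1}R_S\bigr)$, whence $\norm{\hat z_{S^c}}_\infty\le\opnorm{\hGamma_{S^cS}\hGamma_{SS}^{-1}}{\infty}+\frac{2}{\lambda}\norm{R}_\infty$. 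I would then show the empirical incoherence stays within $\alpha/2$ of its population value: decomposing $\hGamma_{S^cS}\hGamma_{SS}^{-1}-\sGamma_{S^cS}(\sGamma_{SS})^{-1}$ into the standard four perturbation terms (products of $\DGamma$ blocks with $(\sGamma_{SS})^{-1}$ and $\hGamma_{SS}^{-1}$) and bounding each via $\epsilon_1\le\alpha/(6c_{\Gamma^*})$ gives $\opnorm{\hGamma_{S^cS}\hGamma_{SS}^{-1}}{\infty}\le(1-\alpha)+\alpha/2=1-\alpha/2$, while the $\lambda$ bound forces $\frac{2}{\lambda}\norm{R}_\infty\le\alpha/2$, so $\norm{\hat z_{S^c}}_\infty<1$ strictly. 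The genuine difficulty is that $\hGamma$ is a \emph{quadratic} function of $\hat\Sigma$ (Lemma~\ref{lem:gammakronecker}) and depends on the signal itself, so the incoherence perturbation must be propagated through the $\Sigma^2$ and $\Sigma\otimes\Sigma$ pieces rather than read off entrywise as in regression; this is exactly what dictates the stringent scaling $\epsilon_1\le\alpha/(6c_{\Gamma^*})$.

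Finally, strict dual feasibility together with $\hGamma_{SS}\succ0$ (a principal submatrix of the positive semidefinite Gram matrix $A(\hat\Sigma)^\top A(\hat\Sigma)$ that is invertible, hence positive definite) yields, by the standard PDW uniqueness argument, that the oracle solution is the unique minimizer of the full problem and that $\hat S\subseteq S$, completing part (a). For part (b) I would simply combine the $\ell_\infty$ error bound with the $\beta$-min condition: if every nonzero $|M^*_{jk}|$ exceeds $\frac{2c_{\Gamma^*}}{2-\alpha}\lambda>\norm{\hat M-M^*}_\infty$, then no true nonzero entry can be estimated as zero and no sign can flip, so $\hat S=S$ and $\sign(\hat M_{jk})=\sign(M^*_{jk})$ for all $(j,k)\in S$.
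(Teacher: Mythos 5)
You follow the same architecture as the paper: the PDW construction, the population normal equation $\sGamma\vsM=\sg$, Neumann-series invertibility of $\hGamma_{SS}$, the oracle error bound, strict dual feasibility, and the $\beta$-min argument for (b). Your derivation of the $\ell_\infty$ bound in part (a) is essentially the paper's, and the constant $\frac{2c_{\Gamma^*}}{2-\alpha}\lambda$ comes out correctly, although your intermediate claim $\norm{R_S}_\infty<\frac{\alpha}{3(2-\alpha)}\lambda$ should read $\frac{2\alpha}{3(2-\alpha)}\lambda$, since each of $\epsilon_2$ and $c_{M^*}\epsilon_1$ consumes a budget of $\frac{\alpha\lambda}{3(2-\alpha)}$; with the corrected value one still gets $\frac{6c_{\Gamma^*}}{6-\alpha}\cdot\frac{(6-\alpha)\lambda}{3(2-\alpha)}=\frac{2c_{\Gamma^*}}{2-\alpha}\lambda$.

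The genuine gap is in your verification of strict dual feasibility, which you correctly identify as the heart of the theorem. Your split is ``empirical incoherence $\le 1-\alpha/2$'' plus ``$\frac{2}{\lambda}\norm{R}_\infty\le\alpha/2$'', but the second budget does not follow from the stated threshold $\lambda>\frac{3(2-\alpha)}{\alpha}\max\{c_{M^*},\epsilon_1,\epsilon_2\}$: that threshold only yields $\frac{2}{\lambda}\norm{R}_\infty<\frac{4\alpha}{3(2-\alpha)}$, and $\frac{4\alpha}{3(2-\alpha)}\ge\frac{2\alpha}{3}>\frac{\alpha}{2}$ for every $\alpha\in(0,1]$. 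Consequently your bound on $\norm{\hat z_{S^c}}_\infty$ is at best $\bigl(1-\tfrac{\alpha}{2}\bigr)+\frac{4\alpha}{3(2-\alpha)}$, which exceeds $1$ for all $\alpha\in(0,1]$ (at $\alpha=1$ it is $11/6$), so strict dual feasibility is not established. The $\lambda$ threshold in the statement is calibrated for the paper's three-way split: keep the \emph{population} incoherence term $\Gamma^{*}_{S^cS}(\Gamma^{*}_{SS})^{-1}\sign(\vtM_S)$ intact, contributing $(1-\alpha)$, and bound the three error contributions (one from $(\DGamma)_{\cdot S}\vsM_S$, one from $\Dg$, one from $(\DGamma)_{\cdot S}$ acting on the estimation error) each by $\alpha/3$. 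Your route through the empirical incoherence can be repaired, but only with sharper constants: one must track that the perturbation of $\opnorm{\hGamma_{S^cS}\hGamma_{SS}^{-1}}{\infty}$ is at most $\frac{\alpha}{6-\alpha}+\frac{(1-\alpha)\alpha}{6-\alpha}=\frac{\alpha(2-\alpha)}{6-\alpha}$ (exploiting the factor $1-\alpha$ in the second perturbation term, not discarding it as you do when claiming $\alpha/2$), and pair this with $\frac{\norm{R}_\infty}{\lambda}<\frac{2\alpha}{3(2-\alpha)}$; the total then lands exactly at $1$ with non-strict inequalities and strictly below $1$ under the stated strict hypotheses. As written, however, your constants do not close the argument. (A separate, minor point shared with the paper: bounding $c_{M^*}\epsilon_1$ by $\frac{\alpha\lambda}{3(2-\alpha)}$ from $\lambda>\frac{3(2-\alpha)}{\alpha}\max\{c_{M^*},\epsilon_1\}$ implicitly uses $\min\{c_{M^*},\epsilon_1\}\le1$.)
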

\begin{proof}
The proof is very similar to the proof of Theorem 1 in \cite{lin2015estimation}. However, there are a few subtle differences and missing explanations that we add in this proof. For all the calculations that are already carried out in \cite{lin2015estimation}, we refer to the original manuscript for these passages.



We use the PDW technique to prove the result. The estimate $\hat{M}$ satisfies the KKT conditions 
\begin{equation}
\label{eq:KKT}
    \hat\Gamma\text{vec}(\hat{M})-\hat g + \lambda \hat{z} = 0,
\end{equation}
where $\hat{z} \in \partial\|\text{vec}(\hat{M})\|_{1}$ is an element of the subdifferential of the $\ell_1$-norm, that is, the elements of the vector $\hat{z} \in \mathbb{R}^{p^2}$ satisfy that elements 
\begin{align*}
    \hat{z}_{(i,j)}=\begin{cases} 
     \text{sign}(\text{vec}(\hat{M})_{(i,j)})  &\text{if } \text{vec}(\hat{M})_{(i,j)} \neq 0,\\
    \in [-1,1]   &\text{if } \text{vec}(\hat{M})_{(i,j)}=0.
    \end{cases}
\end{align*}
Here, we index $\hat z$ by pairs $(i,j)$ with $1\le i,j\le p$.
The optimization problem in \eqref{eq:lyapunovoptim} is convex as $\Gamma$ is positive semidefinite by construction, and the KKT conditions are necessary and sufficient for a solution to be optimal for the problem. The PDW technique constructs, in three steps, a primal-dual pair $(\hat M, \hat z)$ that satisfies~\eqref{eq:KKT} and has the support of $\hat M$ contained in $S$.

Since the true signal $M^{*} \in \text{Stab}_{p}$ and $C \in PD_p$, there exists a unique positive definite $\Sigma^{*}$ determined by the continuous Lyapunov equation in \eqref{eq:lyapunoveq}. As a result 
\[\sGamma \vsM - \sg = 0,\]
and we can rewrite the KKT conditions in \eqref{eq:KKT} in the following block form 
\begin{align*}
    &\begin{bmatrix}
        \Gamma_{SS}^{*} & \Gamma_{SS^{c}}^{*}\\
        \Gamma_{S^{c}S}^{*} & \Gamma_{S^{c}S^{c}}^{*}
    \end{bmatrix}
    \begin{bmatrix}
    (\DM)_S \\
    (\DM)_{S^c}
    \end{bmatrix}
    \\&+
    \begin{bmatrix}
    (\DGamma)_{SS} & (\DGamma)_{SS^c} \\
    (\DGamma)_{S^cS} & (\DGamma)_{S^cS^c} 
    \end{bmatrix}
    \begin{bmatrix}
    \vhM_S \\
    \vhM_{S^c}
    \end{bmatrix}
    +\begin{bmatrix}
    (\Dg)_{S}\\
    (\Dg)_{S^{c}}
    \end{bmatrix}+
    \lambda \begin{bmatrix}
    \hat{z}_{S}\\
    \hat{z}_{S^{c}}
    \end{bmatrix}
     =\begin{bmatrix}
    0\\
    0
    \end{bmatrix},
\end{align*}
where $\DM = \vhM - \vsM$. We now construct a pair $(\hat M, \hat z)$ that satisfies the equation.

{\textbf{ Step 1.}} We solve the restricted optimization problem
\begin{equation}
    \label{eq:supportrestricted}
    \vtM = \arg\min_{\vM_{S^c} = 0}\ 
    \frac{1}{2} \vM^\top  \hGamma \vM - \hg^{\top} \vM +\lambda \norm{\vM}_{1}.
\end{equation}
Since $\Gamma_{SS}^{*}$ is invertible,  under our assumptions, $\hGamma_{SS}$ is also invertible. The matrix $\hGamma_{SS}$ can be expressed as 
\begin{align*}
\hGamma_{SS}= \Gamma_{SS}^{*} + (\hGamma_{SS}-\Gamma_{SS}^{*}) =\Gamma_{SS}^{*} + (\Delta_{\Gamma})_{SS}
\end{align*}
Factoring out $\Gamma_{SS}^{*}$, we obtain 
\begin{align*}
\Gamma_{SS}^{*} + (\Delta_{\Gamma})_{SS} =\Gamma_{SS}^{*} (I_{|S|} + (\Gamma_{SS}^{*})^{-1}(\Delta_{\Gamma})_{SS})
\end{align*}
where $I_{|S|}$ denotes the identity matrix of size $|S| \times |S|$. Then, the matrix $\hGamma_{SS}$ is invertible if 
\begin{align*}
    \rho((\Gamma_{SS}^{*})^{-1}(\Delta_{\Gamma})_{SS})<1.
\end{align*}
This is true as the spectral norm is bounded by the maximum absolute row sum norm and 
\begin{align*}
\opnorm{(\Gamma_{SS}^{*})^{-1}(\Delta_{\Gamma})_{SS}}{\infty} \leq \opnorm{(\Gamma_{SS}^{*})^{-1}}{\infty} \opnorm{(\Delta_{\Gamma})_{SS}}{\infty} <1
\end{align*}
with the second inequality being true because of $\opnorm{(\Delta_{\Gamma})_{SS}}{\infty}< \epsilon_{1} <  \alpha/6c_{\Gamma^{*}} < 1/c_{\Gamma^{*}}$ and $ c_{\Gamma^{*}}=\opnorm{(\sGamma_{SS})^{-1}}{\infty}$.

Therefore, the solution $\vtM$ is unique. Furthermore, we have 
\[
(\vtM)_S = (\hGamma_{SS})^{-1} (\hg_S - \lambda \sign((\vtM)_S).
\]
Let $\tDM = \vtM - \vsM$. Following the proof of Theorem 1 in \citet{lin2015estimation}, we have
\begin{align}
\label{eq:Mdistance}
    \norm{\tDM}_{\infty} \leq \frac{c_{\Gamma^{*}}}{1-\alpha/6} \cdot \frac{6-\alpha}{3(2-\alpha)} \lambda =\frac{2c_{\Gamma^{*}}}{2-\alpha} \lambda.
\end{align}

{\textbf{ Step 2.}} Let $\tilde{z}_{S} = \sign(\vtM_S)$. Then $\tilde{z}_S \in \partial \norm{\vtM}_1$.

{\textbf{ Step 3.}} Let
\begin{multline}
    \label{eq:zsc}
    \tilde{z}_{S^{c}}=
    \frac{1}{\lambda}\left[ 
    -\Gamma_{S^{c}S}^{*}(\Gamma_{SS}^{*})^{-1}((\DGamma)_{SS}\vtM_S+(\Dg)_S)
    + (\DGamma)_{S^{c}S}\vtM_S \right.\\
       \left. + (\Dg)_{S^{c}}+\lambda
        \Gamma^{*}_{S^{c}S}(\Gamma^{*}_{SS})^{-1} \sign(\vtM_S)\right].
\end{multline}
We show that $\norm{\tilde{z}_{S^c}}_1 < 1$, which is a dual feasibility condition. Once this is shown, we have that the pair $(\vtM, \tilde z)$ satisfies \eqref{eq:KKT} by construction, and $(\vhM, \hat z) = (\vtM, \tilde z)$ is the solution to the optimization problem in \eqref{eq:lyapunovoptim}. Furthermore, Lemma 1 of \citet{wainwright2009} implies that the strict dual feasibility implies that $\hat S \subseteq S$. Following Theorem 1 in \cite{lin2015estimation}, we have
\begin{align*}
    \|\tilde{z}_{S^{c}}\|_{\infty} 
    \leq& 
    \underbrace{
    \frac{2-\alpha}{\lambda}\|(\Delta_{\Gamma})_{\cdot S}\text{vec}(M^{*})_{S}\|_{\infty}
    }_{G_1}
    +
    \underbrace{
    \frac{2-\alpha}{\lambda}\opnorm{(\Delta_{\Gamma})_{\cdot S}}{\infty}\|\Delta_{S}\|_{\infty}
    }_{G_{2}}\\
    &+
    \underbrace{
    \frac{2-\alpha}{\lambda}\|\Delta_{g}\|_{\infty}
    }_{G_{3}}+(1-\alpha).
\end{align*}
For $G_1$, we have that
\begin{align*}
    G_{1} 
    \leq \frac{2-\alpha}{\lambda} \opnorm{(\Delta_{\Gamma})_{\cdot S}}{\infty} \|\text{vec}(M^{*})\|_{\infty}
    = \frac{2-\alpha}{\lambda} c_{M^{*}}\epsilon_{1}
    \leq \frac{\alpha}{3}.
\end{align*}
For $G_3$, we have that
\begin{equation*}
    G_{3}=\frac{2-\alpha}{\lambda}\|\Delta_{g}\|_{\infty}<\frac{2-\alpha}{\lambda} \epsilon_{2}\leq \frac{\alpha}{3}.
\end{equation*}
Finally, for $G_{2}$, we have that
\begin{align*}
    G_2<\frac{2-\alpha}{\lambda}\cdot \frac{\alpha}{6c_{\Gamma*}}\cdot \epsilon_{1}  \cdot \frac{c_{\Gamma^{*}}}{1-\alpha/6} \cdot \frac{6-\alpha}{3(2-\alpha)}<\frac{\alpha}{3}.
\end{align*}
Combining these bounds, we have that
$\|\tilde{z}_{S^{c}}\|_{\infty}<1$, which establishes the strict dual feasibility.

Finally, for any $(j,k) \in S$, we have that
\[
|\hat M_{jk}| 
\geq
|M^*_{jk}| - |\hat M_{jk} - M^*_{jk}| 
>
\underset{\underset{(j,k) \in S}{1\leq j <k \leq p}}{\min}|M^{*}_{jk}| - \norm{\vhM - \vsM}_{\infty} > 0 ,
\]
which shows that $\hat S = S$. 
\end{proof}



\section{Probabilistic Analysis}
\label{sec:concentrationhessian}

The Direct Lyapunov Lasso depends on the loss being sufficiently close to its population version in the sense of $\DGamma = \hGamma - \sGamma$ and $\Dg = \hg - \sg$ being sufficiently small.  In this section, we bound $\DGamma$ and $\Dg$ in terms of $\DSigma = \hSigma - \sSigma$ and, subsequently, use a concentration inequality for  $\opnorm{\DSigma}{2}$ to probabilistically bound  $\DGamma$ and $\Dg$.

Deriving an inequality for $\hGamma$ is most critical as the matrix contains sums of products of covariances and a careful analysis is required to obtain a non-trivial requirement on the sample size. Let $\Gamma(\Sigma) = \Gamma_1(\Sigma) + \Gamma_2(\Sigma)$, where
\begin{equation*}
    \Gamma_1(\Sigma) = 2 (\Sigma^{2} \otimes I_p) 
    \quad\text{and}\quad
    \Gamma_2(\Sigma) = (\Sigma \otimes \Sigma)K^{(p,p)}+K^{(p,p)} (\Sigma \otimes \Sigma).
\end{equation*}

\begin{lemma}
\label{lem:concentrationgamma1}
Let $c_{\Sigma^{*}}=\opnorm{\Sigma^{*}}{2}$. Then
\begin{align*}
\opnorm{\Gamma_1(\hSigma) - \Gamma_1(\sSigma)}{2}
\leq 2 \opnorm{\DSigma}{2}^2 +  4 c_{\Sigma^{*}}  \opnorm{\DSigma}{2}.
\end{align*}
\end{lemma}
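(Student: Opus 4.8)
The plan is to exploit the simple Kronecker structure of $\Gamma_1$, which makes this bound essentially elementary. First I would observe that since $\Gamma_1(\Sigma)=2(\Sigma^2\otimes I_p)$, the difference factors cleanly as
\begin{equation*}
\Gamma_1(\hSigma)-\Gamma_1(\sSigma)=2\bigl((\hSigma^2-\sSigma^2)\otimes I_p\bigr).
\end{equation*}
The key structural fact is that the spectral norm is multiplicative on Kronecker products, so $\opnorm{A\otimes I_p}{2}=\opnorm{A}{2}\,\opnorm{I_p}{2}=\opnorm{A}{2}$. Applying this reduces the entire claim to bounding $\opnorm{\hSigma^2-\sSigma^2}{2}$ and then multiplying by the leading factor of $2$.

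Next I would substitute $\hSigma=\sSigma+\DSigma$ and expand the square, giving $\hSigma^2-\sSigma^2=\sSigma\DSigma+\DSigma\sSigma+\DSigma^2$. Then the triangle inequality together with submultiplicativity of the spectral norm yields
\begin{equation*}
\opnorm{\hSigma^2-\sSigma^2}{2}\le \opnorm{\sSigma\DSigma}{2}+\opnorm{\DSigma\sSigma}{2}+\opnorm{\DSigma^2}{2}\le 2\,\opnorm{\sSigma}{2}\,\opnorm{\DSigma}{2}+\opnorm{\DSigma}{2}^2.
\end{equation*}
Recalling the definition $c_{\Sigma^{*}}=\opnorm{\sSigma}{2}$ and multiplying through by $2$ produces exactly the stated bound $2\opnorm{\DSigma}{2}^2+4c_{\Sigma^{*}}\opnorm{\DSigma}{2}$.

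There is no genuine obstacle here: the argument is a two-line expansion once the Kronecker norm identity is in hand. The only points that require a moment of care are verifying that the multiplicativity of the spectral norm over Kronecker products applies (and that the identity factor contributes a norm of $1$), and keeping track of the factor of $2$ from the definition of $\Gamma_1$ so that it combines correctly with the two cross terms $\sSigma\DSigma$ and $\DSigma\sSigma$. Everything else is the standard triangle-inequality-plus-submultiplicativity estimate for a difference of squares of matrices.
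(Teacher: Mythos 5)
Your proposal is correct and follows essentially the same route as the paper: factor out the Kronecker product using $\opnorm{A\otimes B}{2}=\opnorm{A}{2}\opnorm{B}{2}$, expand $\hSigma^2-(\sSigma)^2$ into the two cross terms plus $\DSigma^2$, and finish with the triangle inequality and submultiplicativity of the spectral norm. No gaps.
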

\begin{proof}
Using that $\opnorm{A \otimes B}{2} = \opnorm{A}{2}\opnorm{B}{2}$, we obtain that
\begin{align*}
     \opnorm{\Gamma_{1}(\hat{\Sigma})-\Gamma_{1}(\Sigma^{*})}{2}
     &= 2\opnorm{(\hat{\Sigma}^{2}-(\Sigma^{*})^2)\otimes I_p)}{2}\\ 
     &= 2\opnorm{ \hat{\Sigma}^{2}-(\Sigma^{*})^2)}{2}\\
     &\leq 2\opnorm{ \DSigma }{2}^2 + 2\opnorm{ \DSigma \Sigma^{*}}{2} + 2\opnorm{ \Sigma^{*} \DSigma}{2}.
\end{align*}
Since the spectral norm of a symmetric matrix is the absolute maximal eigenvalue, and the eigenvalues of a squared matrix are the squared eigenvalues of the original matrix, we find as claimed that
\begin{equation*}
    \opnorm{\Gamma_{1}(\hat{\Sigma})-\Gamma_{1}(\Sigma^{*})}{2} \leq 2\opnorm{\DSigma}{2}^2 +  4 \opnorm{\Sigma^{*}}{2} \opnorm{\DSigma}{2}.
\end{equation*}
\end{proof}

\begin{lemma}
\label{lem:concentrationgamma2}
Let $c_{\Sigma^{*}}=\opnorm{\Sigma^{*}}{2}$. Then
\begin{align*}
    \opnorm{\Gamma_2(\hSigma) - \Gamma_2(\sSigma)}{2}  \leq 2 \opnorm{\DSigma}{2}^2 + 4 c_{\Sigma^{*}}\opnorm{\DSigma}2.
\end{align*}
\end{lemma}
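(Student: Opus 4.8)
The plan is to reduce everything to the single Kronecker-product difference $\hSigma \otimes \hSigma - \sSigma \otimes \sSigma$, exactly as in the proof of Lemma~\ref{lem:concentrationgamma1}, and then expand it in the perturbation $\DSigma$. First I would observe that the commutation matrix $K^{(p,p)}$ is a permutation matrix, hence orthogonal, so $\opnorm{K^{(p,p)}}{2}=1$. Writing
\begin{align*}
\Gamma_2(\hSigma) - \Gamma_2(\sSigma)
= \big(\hSigma \otimes \hSigma - \sSigma \otimes \sSigma\big) K^{(p,p)}
+ K^{(p,p)} \big(\hSigma \otimes \hSigma - \sSigma \otimes \sSigma\big),
\end{align*}
the triangle inequality together with submultiplicativity of the spectral norm and $\opnorm{K^{(p,p)}}{2}=1$ gives
\begin{align*}
\opnorm{\Gamma_2(\hSigma) - \Gamma_2(\sSigma)}{2}
\leq 2\,\opnorm{\hSigma \otimes \hSigma - \sSigma \otimes \sSigma}{2}.
\end{align*}

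Next I would substitute $\hSigma = \sSigma + \DSigma$ and expand the tensor product bilinearly:
\begin{align*}
\hSigma \otimes \hSigma - \sSigma \otimes \sSigma
= \DSigma \otimes \sSigma + \sSigma \otimes \DSigma + \DSigma \otimes \DSigma.
\end{align*}
Applying the triangle inequality and the identity $\opnorm{A \otimes B}{2}=\opnorm{A}{2}\opnorm{B}{2}$ to each of the three terms, and using $\opnorm{\sSigma}{2}=c_{\Sigma^{*}}$, yields
\begin{align*}
\opnorm{\hSigma \otimes \hSigma - \sSigma \otimes \sSigma}{2}
\leq 2 c_{\Sigma^{*}} \opnorm{\DSigma}{2} + \opnorm{\DSigma}{2}^2.
\end{align*}
Combining this with the factor of $2$ from the commutation-matrix step gives exactly the claimed bound $2\opnorm{\DSigma}{2}^2 + 4 c_{\Sigma^{*}} \opnorm{\DSigma}{2}$.

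I expect no substantive obstacle here: the argument is entirely routine once one notes that $K^{(p,p)}$ is norm-preserving and that the spectral norm factorizes over Kronecker products. The only point worth stating carefully is the orthogonality of the commutation matrix (so that multiplication by $K^{(p,p)}$ does not inflate the norm), after which the estimate follows from the same bilinear expansion used for $\Gamma_1$ in Lemma~\ref{lem:concentrationgamma1}. This parallelism is, incidentally, the reason the two lemmas produce the identical bound.
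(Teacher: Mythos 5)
Your argument is correct and follows essentially the same route as the paper's proof: reduce to $\opnorm{\hSigma \otimes \hSigma - \sSigma \otimes \sSigma}{2}$ via $\opnorm{K^{(p,p)}}{2}=1$, then expand the Kronecker product bilinearly in $\DSigma$ and apply $\opnorm{A \otimes B}{2}=\opnorm{A}{2}\opnorm{B}{2}$ termwise. No gaps to report.
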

\begin{proof}
The commutation matrix $K^{(p,p)}$ is an orthonormal matrix. Therefore, $\opnorm{K^{(p,p)}}{2}=1$ and 
\begin{align*}
\label{eq:proof:lem:concentrationgamma2:1}
    \opnorm{K^{(p,p)} (\hat{\Sigma} \otimes \hat{\Sigma}-\Sigma^{*} \otimes \Sigma^{*}) }{2} &=  \opnorm{ (\hat{\Sigma} \otimes \hat{\Sigma}-\Sigma^{*} \otimes \Sigma^{*}) K^{(p,p)} }{2} 
    = \opnorm{ \hat{\Sigma} \otimes \hat{\Sigma}-\Sigma^{*} \otimes \Sigma^{*} }{2}.
\end{align*}
We obtain that
\begin{align*}
  \opnorm{\Gamma_2(\hSigma) - \Gamma_2(\sSigma)}{2}
  \leq & 2 \opnorm{\hat{\Sigma} \otimes \hat{\Sigma} - \Sigma^{*} \otimes \Sigma^{*}}2 
  \\
  =&2 \opnorm{\DSigma \otimes \DSigma + \DSigma \otimes \Sigma^{*} + \Sigma^{*} \otimes \DSigma + \Sigma^{*} \otimes \Sigma^{*}-\Sigma^{*} \otimes \Sigma^{*}}2\\
  \leq & 2\opnorm{\DSigma \otimes \DSigma}{2} 
  +  2\opnorm{\DSigma \otimes \Sigma^{*}}{2} + 2\opnorm{\Sigma^{*} \otimes \DSigma}{2}\\
  \leq & 2\opnorm{\DSigma}{2}^2+ 4 \opnorm{\Sigma^{*}}{2} \opnorm{\DSigma}{2},
\end{align*}
which was the claim.
\end{proof}

For a matrix $A \in \RR^{p \times d}$, it holds that $\opnorm{A}{\infty} \leq \sqrt{d}\opnorm{A}{2}$. Then it follows from Lemma~\ref{lem:concentrationgamma1} and Lemma~\ref{lem:concentrationgamma2} that
\begin{equation}
    \label{eq:bound:DGamma}
    \opnorm{(\DGamma)_{\cdot S}}{\infty}
    \leq 
    \sqrt{d} \rbr{
    4 \opnorm{\DSigma}{2}^2 + 8 c_{\Sigma^{*}}\opnorm{\DSigma}{2}
    }.
\end{equation}
We note that bounding $\opnorm{(\DGamma)_{\cdot S}}{\infty}$ using $\norm{(\DGamma)_{\cdot S}}_{\infty}$, as was done in \cite{lin2015estimation}, leads to a worse bound. While such an approach might seem simpler, it does not exploit the structure of the Hessian $\Gamma$ in Lemma \ref{lem:gammakronecker}.

We now provide a bound on $\norm{\Dg}_{\infty}$. 
\begin{lemma}
\label{lem:concentrationg}
We have $\norm{\Dg}_{\infty} \leq 2c_{C} \opnorm{\DSigma}{2}$, where $c_{C} = \norm{\text{vec}(C)}_{2}$. 
\end{lemma}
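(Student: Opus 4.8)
The plan is to exploit the fact that $g(\Sigma) = -A(\Sigma)\,\text{vec}(C)$ is \emph{linear} in $\Sigma$, since $A(\Sigma) = (\Sigma \otimes I_p) + (I_p \otimes \Sigma)K^{(p,p)}$ depends linearly on $\Sigma$. This is the key structural observation that makes the bound essentially immediate, in contrast to the quadratic dependence that complicated the analysis of $\Gamma$. First I would write
\[
\Dg = g(\hSigma) - g(\sSigma) = -\bigl(A(\hSigma) - A(\sSigma)\bigr)\text{vec}(C)
= -\bigl[(\DSigma \otimes I_p) + (I_p \otimes \DSigma)K^{(p,p)}\bigr]\text{vec}(C),
\]
using that the Kronecker product is bilinear so the difference $A(\hSigma) - A(\sSigma)$ collapses to the same expression as $A$ but with $\DSigma = \hSigma - \sSigma$ in place of $\Sigma$.

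Next I would pass from the $\ell_\infty$-norm to the Euclidean norm via the trivial inequality $\norm{v}_\infty \le \norm{v}_2$ for any vector $v$, and then apply the triangle inequality to the two Kronecker terms:
\[
\norm{\Dg}_\infty \le \norm{\Dg}_2 \le \norm{(\DSigma \otimes I_p)\text{vec}(C)}_2 + \norm{(I_p \otimes \DSigma)K^{(p,p)}\text{vec}(C)}_2.
\]
Each summand is then controlled by submultiplicativity of the spectral norm, $\norm{Av}_2 \le \opnorm{A}{2}\norm{v}_2$, together with the two facts already used in the proofs of Lemma~\ref{lem:concentrationgamma1} and Lemma~\ref{lem:concentrationgamma2}, namely the identity $\opnorm{A \otimes B}{2} = \opnorm{A}{2}\opnorm{B}{2}$ and the orthogonality of the commutation matrix, which gives $\opnorm{K^{(p,p)}}{2} = 1$. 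Since $\opnorm{I_p}{2} = 1$, both terms reduce to $\opnorm{\DSigma}{2}\,\norm{\text{vec}(C)}_2 = c_C\opnorm{\DSigma}{2}$.

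Summing the two contributions yields $\norm{\Dg}_\infty \le 2c_C\opnorm{\DSigma}{2}$, which is the claim. There is no genuine obstacle here: the only point requiring a moment's care is confirming that $A$ is affine in its argument so that the difference $A(\hSigma)-A(\sSigma)$ has no cross terms, and that the bound $\norm{\cdot}_\infty \le \norm{\cdot}_2$ is not wasteful enough to matter (it is not, since the resulting factor of $2c_C$ feeds into the sample-size constant $\tilde c$ in Theorem~\ref{thm:probsupport} only through $c_C^2$). The linearity in $\Sigma$ is precisely why bounding $\Dg$ is far easier than bounding $\DGamma$.
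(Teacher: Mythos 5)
Your proof is correct and follows essentially the same route as the paper: bound $\norm{\Dg}_{\infty}$ by $\norm{\Dg}_{2}$, use the linearity of $A(\Sigma)$ in $\Sigma$ to write the difference in terms of $\DSigma$, and control the two Kronecker terms via $\opnorm{A\otimes B}{2}=\opnorm{A}{2}\opnorm{B}{2}$ and $\opnorm{K^{(p,p)}}{2}=1$. The only cosmetic difference is that you apply the triangle inequality to the vector $\Dg$ before extracting $\norm{\mathrm{vec}(C)}_2$, while the paper first bounds $\norm{\Dg}_2\le c_C\opnorm{A(\hSigma)-A(\sSigma)}{2}$ and then splits the operator norm; the two orderings are equivalent.
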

\begin{proof}
Similar to the proof of Lemma~\ref{lem:concentrationgamma1} and Lemma~\ref{lem:concentrationgamma2}, we have
\begin{align*}
    \norm{\Dg}_{\infty} 
    &\leq\norm{\Dg}_{2}\\
    &\leq  c_{C} \opnorm{\Sigma^{*} \otimes I_p-(I_p \otimes \Sigma^{*})K^{(p,p)}-\hat{\Sigma} \otimes I_p + (I_p \otimes \hat{\Sigma}) K^{(p,p)}}{2}\\
    &\leq c_{C}(\opnorm{I_p \otimes (\hat{\Sigma}-\Sigma^{*})}{2}+\opnorm{(\hat{\Sigma}-\Sigma^{*})\otimes I_p}{2}) \quad \text{(since $\opnorm{K^{(p,p)}}{2}=1$)}\\
    &=2 c_{C} \opnorm{\DSigma}{2}.
\end{align*}
\end{proof}

The bounds in \eqref{eq:bound:DGamma} and Lemma~\ref{lem:concentrationg} depend on the spectral norm of $\DSigma$. We adapt Theorem 6.5 in \cite{wainwright2009b} to our setting to upper bound $\opnorm{\DSigma}{2}$ under the assumption that $(x_i)_{i=1}^{n}$ are sub-Gaussian. 
\begin{theorem}[Theorem 6.5. in \cite{wainwright2009b}]
\label{thm:spectraltheo}
Suppose that $(X_i)_{i=1}^{n}$ are $\sigma$ sub-Gaussian random variables. 
Then the sample covariance matrix $\hat{\Sigma}$ in \eqref{eq:samplecov} satisfies 
\begin{align*}
    \mathbb{P}\left(\frac{\opnorm{\hat{\Sigma}-\Sigma^{*}}{2}}{\sigma^2} \geq c_1\left\lbrace \sqrt{\frac{p}{n}}+\frac{p}{n}\right\rbrace+\delta \right) \leq c_2 \exp(-c_3 n \min \lbrace \delta,\delta^2 \rbrace) \quad \forall \delta \geq 0,
\end{align*}
where $\lbrace c_j \rbrace_{j=0}^{3}$ are universal constants.
\end{theorem}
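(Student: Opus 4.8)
The plan is to establish the operator-norm concentration of $\hat\Sigma$ by the standard discretization-plus-Bernstein route for sample covariance matrices of sub-Gaussian vectors. First I would reduce the spectral norm to a maximum over a finite net: since $\hat\Sigma-\Sigma^*$ is symmetric, $\opnorm{\hat\Sigma-\Sigma^*}{2}=\sup_{\|v\|_2=1}|v^\top(\hat\Sigma-\Sigma^*)v|$, and a covering argument shows that for any $1/4$-net $\mathcal{N}$ of the unit sphere $S^{p-1}$ one has $\opnorm{\hat\Sigma-\Sigma^*}{2}\le 2\max_{v\in\mathcal{N}}|v^\top(\hat\Sigma-\Sigma^*)v|$, with $\mathcal{N}$ chosen so that $|\mathcal{N}|\le 9^p$.

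The second step is a pointwise bound. For each fixed unit vector $v$, set $Y_i=v^\top X_i$, so that $v^\top(\hat\Sigma-\Sigma^*)v=\tfrac{1}{n}\sum_{i=1}^n(Y_i^2-\mathbb{E}Y_i^2)$. Because $X_i$ is $\sigma$-sub-Gaussian, each $Y_i$ is a mean-zero sub-Gaussian scalar with parameter of order $\sigma$, whence the centered squares $Y_i^2-\mathbb{E}Y_i^2$ are sub-exponential with parameter of order $\sigma^2$, \emph{uniformly} in $v$. Bernstein's inequality for sums of independent sub-exponential variables then gives, for each fixed $v$,
\[
\mathbb{P}\left(\frac{1}{n}\left|\sum_{i=1}^n(Y_i^2-\mathbb{E}Y_i^2)\right|\ge \sigma^2 t\right)\le 2\exp\!\left(-c\,n\min\{t,t^2\}\right),
\]
where the two regimes $t$ and $t^2$ are precisely what eventually yield the $\min\{\delta,\delta^2\}$ in the conclusion.

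The third step couples the net with this pointwise bound via a union bound over the $9^p$ elements of $\mathcal{N}$. The combinatorial factor $9^p=\exp(p\log 9)$ must be absorbed into the exponent $c\,n\min\{t,t^2\}$: choosing $t\asymp\sqrt{p/n}+p/n+\delta$ lets $n\min\{t,t^2\}$ dominate $p\log 9$ by a constant factor, with the $\sqrt{p/n}$ term controlling the sub-Gaussian ($t^2$) regime and the $p/n$ term controlling the sub-exponential ($t$) regime of Bernstein. After the factor of $2$ from the net reduction and a relabeling of constants, this reproduces the stated bound with universal $c_1,c_2,c_3$.

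The main obstacle is the bookkeeping of the two-regime Bernstein bound through the union bound so that both the $\sqrt{p/n}$ scaling and the additive $p/n$ correction emerge with the correct dimension-free constants, together with the verification that the sub-exponential norm of $Y_i^2-\mathbb{E}Y_i^2$ is bounded by $\sigma^2$ alone, independently of $v$. Since this is exactly Theorem~6.5 of \cite{wainwright2009b}, I would ultimately invoke that reference for the precise values of the universal constants rather than re-deriving them here.
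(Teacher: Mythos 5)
The paper does not prove this statement at all: it is imported verbatim as Theorem~6.5 of \cite{wainwright2009b}, and your sketch is precisely the standard discretization-plus-Bernstein argument by which that result is established in the cited reference (a $1/4$-net of $S^{p-1}$ of size at most $9^p$, sub-exponential tail bounds for $v^\top(\hat\Sigma-\Sigma^*)v$ uniform in $v$, and a union bound absorbing the $\exp(p\log 9)$ factor). Your proposal is correct and takes essentially the same route as the source the paper relies on.
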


\begin{corollary}
\label{cor:spectralcor}
Let $\lbrace c_j \rbrace_{j=1}^{3}$ be the universal constants from Theorem~\ref{thm:spectraltheo}, but ensuring that $c_{1}>\max \lbrace 1,1/\opnorm{\Sigma^{*}}{2}\rbrace$. Let $(X_i)_{i=1}^{n}$ be Gaussian random variables. For any
$\epsilon \in (4c_1 \opnorm{\Sigma^{*}}{2} \sqrt{p/n}, 2)$, we have
\begin{align*}
    \mathbb{P}\left(\opnorm{\hat{\Sigma}-\Sigma^{*} }{2} \geq \epsilon\right) \leq c_2 \exp\left(- \frac{c_3}{4\max(1,\opnorm{\Sigma^{*}}{2}^2)}n\epsilon^{2} \right).
\end{align*}
\end{corollary}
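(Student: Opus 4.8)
The plan is to obtain the corollary as a direct specialization of Theorem~\ref{thm:spectraltheo}, writing $s = \opnorm{\sSigma}{2}$ for brevity. The first step is to identify the sub-Gaussian parameter in the Gaussian case: if $X \sim N(0,\sSigma)$ then for any unit vector $u$ the projection $u^{\top}X$ is Gaussian with variance $u^{\top}\sSigma u \le s$, so the rows are $\sigma$-sub-Gaussian with $\sigma^2 = s$ (the moment generating function bound holds with this proxy and no extra constant). Substituting $\sigma^2 = s$ into the theorem, the event $\{\opnorm{\DSigma}{2} \ge \epsilon\}$ coincides with $\{\opnorm{\DSigma}{2}/\sigma^2 \ge \epsilon/s\}$, so it suffices to express $\epsilon/s = c_1(\sqrt{p/n}+p/n) + \delta$ for an admissible $\delta \ge 0$ and then control the exponent $\min\{\delta,\delta^2\}$.

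The second step is to set $\delta := \epsilon/s - c_1(\sqrt{p/n}+p/n)$ and to show $\delta > \epsilon/(2s)$; this is where the hypotheses enter. From $c_1 > \max\{1, 1/s\}$ and $\epsilon < 2$ together with the lower bound $\epsilon > 4 c_1 s \sqrt{p/n}$, one gets $\sqrt{p/n} < 1/2$, hence $p/n < \sqrt{p/n}$ and therefore $\sqrt{p/n}+p/n \le 2\sqrt{p/n}$. The lower bound on $\epsilon$ then yields $c_1(\sqrt{p/n}+p/n) \le 2 c_1 \sqrt{p/n} < \epsilon/(2s)$, which gives both $\delta > 0$ (so Theorem~\ref{thm:spectraltheo} applies) and the strict bound $\delta > \epsilon/(2s)$.

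The third step is the translation of the exponent. With this choice of $\delta$ the threshold in the theorem equals $\epsilon/s$, so the probability of $\{\opnorm{\DSigma}{2} \ge \epsilon\}$ is at most $c_2\exp(-c_3 n \min\{\delta,\delta^2\})$, and it remains to lower bound $\min\{\delta,\delta^2\}$. Since $x \mapsto \min\{x,x^2\}$ is nondecreasing on $[0,\infty)$ and $\delta > \epsilon/(2s)$, I would reduce to $\min\{\epsilon/(2s),\, \epsilon^2/(4s^2)\}$ and then verify that each term is at least $\epsilon^2/(4\max\{1,s^2\})$: the second term because $s^2 \le \max\{1,s^2\}$, and the first because $\epsilon/(2s) \ge \epsilon^2/(4\max\{1,s^2\})$ is equivalent to $\epsilon \le 2\max\{1,s^2\}/s$, which holds since the right side is at least $2 > \epsilon$ regardless of whether $s \ge 1$. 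Substituting $\min\{\delta,\delta^2\} \ge \epsilon^2/(4\max\{1,s^2\})$ into the exponent produces exactly the stated bound.

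The norm and monotonicity manipulations are routine; the step demanding the most care is the second one, namely checking that the assumptions on $c_1$ and the admissible range of $\epsilon$ jointly keep $\delta$ positive and bounded below by $\epsilon/(2s)$. This is precisely the mechanism that collapses the two-term deviation rate $\sqrt{p/n}+p/n$ of Theorem~\ref{thm:spectraltheo} into a single clean sub-Gaussian tail of order $\exp(-c\,n\epsilon^2)$.
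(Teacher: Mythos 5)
Your proof is correct and follows essentially the same route as the paper: both arguments specialize Theorem~\ref{thm:spectraltheo} with sub-Gaussian parameter $\sigma^2=\opnorm{\Sigma^*}{2}$, use the lower bound on $\epsilon$ to absorb the $c_1(\sqrt{p/n}+p/n)$ term into $\epsilon/(2\opnorm{\Sigma^*}{2})$, and use $\epsilon<2$ to reduce $\min\{\delta,\delta^2\}$ to $\epsilon^2/(4\max\{1,\opnorm{\Sigma^*}{2}^2\})$. The only cosmetic difference is that the paper fixes $\delta=\min(\epsilon/(2\opnorm{\Sigma^*}{2}),\epsilon/2)$ and checks the resulting threshold is below $\epsilon$, whereas you take $\delta$ to be the exact residual and then lower-bound it.
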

\begin{proof}
A Gaussian random vector is sub-Gaussian with parameter $\sigma = \opnorm{\Sigma^{*}}{2}$. \\
Set $\delta=\min \left(\frac{\epsilon}{2 \opnorm{\Sigma^{*}}{2}}, \frac{\epsilon}{2}\right)$. Since $\frac{p}{n}<\frac{\epsilon^2}{16 c_{1}^2 \opnorm{\Sigma^{*}}{2}^2}$, we have
\begin{align*}
    &\opnorm{\Sigma^{*}}{2} \left( c_{1}  \left \lbrace  \sqrt{\frac{p}{n}}+\frac{p}{n}  \right \rbrace +\delta \right) < c_{1} \opnorm{\Sigma^{*}}{2} \left\lbrace \frac{\epsilon}{4 c_{1} \opnorm{\Sigma^{*}}{2}} + \frac{\epsilon^2}{16 c_{1}^2 \opnorm{\Sigma^{*}}{2}^2} \right\rbrace +\frac{\epsilon}{2}\\
    = &\frac{\epsilon}{4}+ \frac{\epsilon^2}{16 c_{1}\opnorm{\Sigma^{*}}{2}} + \frac{\epsilon}{2} <  \frac{\epsilon}{4} + \frac{\epsilon}{4}+ \frac{\epsilon}{2}  =\epsilon.
\end{align*}
Since $\delta < 1$, it holds that $\delta^{2}<\delta$. Then
\begin{align*}
    \mathbb{P}\left(\opnorm{\hat{\Sigma}-\Sigma^{*} }{2} \geq \epsilon \right) & \leq \mathbb{P}\left(\opnorm{\hat{\Sigma}-\Sigma^{*}}{2} \geq \opnorm{\Sigma^{*}}{2} \left( c_1  \left\lbrace \sqrt{\frac{p}{n}}+\frac{p}{n}\right\rbrace+\delta \right) \right)\\ 
    &    \leq c_{2} \exp(-c_3 n \delta^2)
    = c_2 \exp\left(- \frac{c_3}{4\max(1,\opnorm{\Sigma^{*}}{2}^2)}n\epsilon^{2} \right). 
\end{align*}
\end{proof}


We finally have the following result.
\begin{lemma}
\label{lem:express error in delta}
In the event that
 \begin{align*}
        \opnorm{\DSigma}{2}=\opnorm{\hat{\Sigma}-\Sigma^{*}}2 <  \min \left\lbrace \frac{\epsilon_1}{\sqrt{d}(4+8 c_{\Sigma^{*}})},\frac{\epsilon_2}{2 c_{C}}  \right \rbrace
\end{align*}
it holds that
\begin{align*}
\opnorm{(\DGamma)_{\cdot S}}{\infty} < \epsilon_1  
\quad  \text{ and } \quad  
\norm{\Dg}_{\infty} < \epsilon_2.
\end{align*}
\end{lemma}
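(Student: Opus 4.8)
The plan is to treat the two conclusions separately, since each follows by substituting the hypothesized bound on $\opnorm{\DSigma}{2}$ into an estimate that is already established. Throughout, write $\delta := \opnorm{\DSigma}{2}$ as shorthand, so that the hypothesis reads $\delta < \min\{\epsilon_1/(\sqrt{d}(4+8c_{\Sigma^{*}})),\ \epsilon_2/(2c_{C})\}$, and in particular $\delta$ is smaller than each of the two quantities in the minimum individually.

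The bound on $\norm{\Dg}_{\infty}$ is immediate. Lemma~\ref{lem:concentrationg} gives $\norm{\Dg}_{\infty} \le 2c_{C}\delta$, and since $\delta < \epsilon_2/(2c_{C})$ we conclude $\norm{\Dg}_{\infty} < \epsilon_2$. For the Hessian term I would start from the combined estimate \eqref{eq:bound:DGamma}, namely $\opnorm{(\DGamma)_{\cdot S}}{\infty} \le \sqrt{d}\,(4\delta^2 + 8c_{\Sigma^{*}}\delta)$, which itself is just the assembly of Lemma~\ref{lem:concentrationgamma1} and Lemma~\ref{lem:concentrationgamma2} together with the elementary norm comparison $\opnorm{A}{\infty}\le\sqrt{d}\,\opnorm{A}{2}$. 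The goal is then to show this is $<\epsilon_1$ under the first constraint on $\delta$.

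The only point requiring care is the quadratic term $4\delta^2$: a direct substitution of $\delta$ would leave a factor $(4\delta+8c_{\Sigma^{*}})$ instead of the clean $(4+8c_{\Sigma^{*}})$ appearing in the hypothesis. I would resolve this by observing that $\delta < 1$ in the regime where the lemma is invoked, so that $\delta^2 < \delta$ and hence $4\delta^2 + 8c_{\Sigma^{*}}\delta < (4+8c_{\Sigma^{*}})\delta$. Substituting $\delta < \epsilon_1/(\sqrt{d}(4+8c_{\Sigma^{*}}))$ then gives $(4+8c_{\Sigma^{*}})\delta < \epsilon_1/\sqrt{d}$, and multiplying through by $\sqrt{d}$ yields $\opnorm{(\DGamma)_{\cdot S}}{\infty} < \epsilon_1$, as desired.

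The main (and essentially only) obstacle is thus the handling of $\delta^2$, and the clean statement of the lemma tacitly relies on $\delta < 1$. I would justify this by noting that the hypothesized upper bound on $\delta$ is itself at most $1$ in the parameter regime where the lemma is applied (in Theorem~\ref{thm:deterministicres}, $\epsilon_1$ is taken small, with $\epsilon_1 \le \alpha/(6c_{\Gamma^{*}})$); alternatively, one may keep the exact factor $(4\delta+8c_{\Sigma^{*}})$ and observe that it is dominated by $(4+8c_{\Sigma^{*}})$ precisely once $\delta<1$. No additional estimates are needed, since both ingredient inequalities have already been proved.
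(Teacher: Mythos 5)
Your proposal is correct and follows essentially the same route as the paper, which likewise combines \eqref{eq:bound:DGamma} with the inequality $\opnorm{\DSigma}{2}^2\le\opnorm{\DSigma}{2}$ and Lemma~\ref{lem:concentrationg}. The one point you dwell on --- that absorbing the quadratic term tacitly requires $\opnorm{\DSigma}{2}\le 1$ --- is exactly the step the paper uses without comment, so your justification via the regime in which the lemma is invoked is a welcome (and accurate) clarification rather than a deviation.
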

\begin{proof}
The result follows directly from \eqref{eq:bound:DGamma}, where $\opnorm{\DSigma}{2}^2\le \opnorm{\DSigma}{2}$, and Lemma~\ref{lem:concentrationg}.
\end{proof}

\section{Proof Probabilistic Guarantee on Support Recovery}
\label{app:probresult}

Using the preparation in Appendix~\ref{sec:concentrationhessian}, we prove the main result. 
\vspace{0.5cm}
\begin{proof}[Proof of Theorem~\ref{thm:probsupport}]
We prove the result in three steps.
\begin{itemize}
   \item[1)] It has to hold that 
   \begin{align*}
       \frac{\epsilon}{\sqrt{d}(4+8 c_{\Sigma^{*}})}, \frac{\epsilon}{2 c_{C}} \in \left(4 c_1 \opnorm{\Sigma^{*}}{2} \sqrt{p/n},2\right).
   \end{align*}
   \item[2)] Then Corollary~\ref{cor:spectralcor} gives us that
   \begin{align*}
  \opnorm{\Delta_{\Sigma}}{2} <  \min \left\lbrace \frac{\epsilon}{\sqrt{d}(4+8 c_{\Sigma^{*}})},\frac{\epsilon}{2 c_{C}} \right \rbrace
\end{align*}
  with probability at least $1 -  c_{2} \exp \left( - \tau_{1} p \right)$. Then $\opnorm{(\Delta_{\Gamma})_{\cdot S}}{\infty} < \epsilon$ and $\|\Delta_g\|_{\infty} < \epsilon$,
using Lemma~\ref{lem:express error in delta}.
\item[3)] We verify that $\epsilon \leq \frac{\alpha}{6c_{\Gamma^{*}}}$ under the assumption on the sample size. Then, the result follows from Theorem~\ref{thm:deterministicres}.
\end{itemize}
In the following, we go through the steps in detail. 
\begin{itemize}
    \item[1)] Using the lower bound on the sample size, it holds that
    \begin{align*}
    \frac{\epsilon}{\sqrt{d}(4+8 c_{\Sigma^{*}})} &= \frac{\sqrt{\tau_1 \tilde{c} d p /n}}{\sqrt{d}(4+8 c_{\Sigma^{*}})}\\
    &< \frac{ \sqrt{\tau_1 \tilde{c} d p /\tau_{1} \tilde{c} d p  \max \lbrace c_{*}^2, 1/4 \rbrace }}{\sqrt{d}(4+8 c_{\Sigma^{*}})}\\
    &= \frac{\sqrt{1/ \max \lbrace c_{*}^2, 1/4 \rbrace}}{{\sqrt{d}(4+8 c_{\Sigma^{*}})}}\\
    & \leq \sqrt{1/ \max \lbrace c_{*}^2, 1/4 \rbrace}\\
    & \leq \sqrt{4} =2. 
    \end{align*}
Using $\tau_{1} \geq 1$, we obtain
\begin{align*}
    \frac{\epsilon}{\sqrt{d}(4+8 c_{\Sigma^{*}})} &=  \frac{\sqrt{\tau_1 \tilde{c} d p /n}}{{\sqrt{d}(4+8 c_{\Sigma^{*}})}}\\
     & > \frac{\sqrt{\tilde{c}} \sqrt{p/n}}{(4+8 c_{\Sigma^{*}})} \\
     & \geq \frac{\sqrt{ (4+8 c_{\Sigma^{*}})^2 16 c_{1}^2 c_{\Sigma^{*}}^2} \sqrt{ p/n }}{(4+8 c_{\Sigma^{*}})} \\
     & = 4 c_{1} c_{\Sigma^{*}} \sqrt{p/n}. 
\end{align*}
\item[2)]  Using Corollary~\ref{cor:spectralcor} we obtain
\begin{align*}
    &\mathbb{P}\left(\opnorm{\Delta_{\Sigma}}{2} \geq \frac{\epsilon}{\sqrt{d}(4+8 c_{\Sigma^{*}})}\right) \\
    \leq & c_2 \exp \left( - \frac{c_{3}}{4 \max(1, c_{\Sigma^{*}}^2)} n \frac{{\tau_{1}}\tilde{c}d p /n}{d(4+8 c_{\Sigma^{*}})^2} \right)\\
    \leq & c_2 \exp \left( - \frac{c_{3} \tilde{c}}{4 \max(1, c_{\Sigma^{*}}^2) (4+8 c_{\Sigma^{*}})^2} \tau_{1} p   \right)\\
    \leq & c_{2} \exp \left( - \tau_{1} p \right)
\end{align*}
\item[3)] We verify that $\epsilon \leq \frac{\alpha}{6c_{\Gamma^{*}}}$ under the assumption on the sample size.
\begin{align*}
  \epsilon = & \sqrt{\tau_1 \tilde{c} d p /n}\\
    \leq & \sqrt{\tau_1 \tilde{c} d p /\tau_1 \tilde{c} d p \max \lbrace c_{*}^2, 1/4 \rbrace } \\
    = & \sqrt{1/\max \lbrace c_{*}^2, 1/4 \rbrace}\\
    \leq & \frac{\alpha}{6c_{\Gamma^{*}}}
\end{align*}
\end{itemize}

For the same choice of $\epsilon$ and $\epsilon /2 c_{C}$ steps 1) - 3) can be carried out analogously and we obtain that 
\begin{align*}
\mathbb{P}\left(\opnorm{\Delta_{\Sigma}}{2} \geq \frac{\epsilon}{2 c_{C}}\right) \leq c_{2} \exp \left( -\tau_{1} dp \right).
\end{align*}
The result follows by applying Theorem \ref{thm:deterministicres}. 
\end{proof}

\section{Irrepresentability Condition}
\label{sec:irrappendix}

This section is divided into four parts. First, we give the proof of Theorem \ref{thm:oderingdiagonal} and provide an illustrating Example. Second, we discuss a weaker notion of the irrepresentability condition \eqref{eq:irrcondition} that is necessary for support recovery and more often fulfilled. Third, we provide a detailed simulation study comparing the fulfillment of the irrepresentability condition and its weaker notion. Finally, we show that the impact of the weak irrepresentability condition is already increasing the performance of the Direct Lyapunov Lasso drastically. 

\subsection{Irrepresentability Proof and Example}
\label{sec:irrproofandexample}
\begin{proof}[Theorem~\ref{thm:oderingdiagonal}]
  Let $\Sigma^0 = \Sigma(M^0, C)$ be the
   covariance matrix  associated to the drift matrix
   $M^0$.  As we are assuming that $C=2I_p$, we have
   \begin{align*}
     \Sigma^{0}=-(M^0)^{-1}=\text{diag}(1/d_1,\dots,1/d_p). 
   \end{align*}
   Writing $\Gamma^0=\Gamma(\Sigma^0)$ for the resulting Gram matrix,
   we define the \emph{local} irrepresentability constant
   \[
     \tilde\rho_G(M^0)=\opnorm{\Gamma^{0}_{S_G^cS_G} (\Gamma^{0}_{S_GS_G})^{-1}}{\infty}.
   \]
   If a small open ball around $M^0$ contains a matrix $M$, then the
   ball also contains all matrices that are obtained from $M$ by
   negating one or more of the off-diagonal entries.  Hence, by
   continuity, the irrepresentability condition for support 
   $S_G$ holds uniformly over a neighborhood of $M^0$  if and only if
   (i) the submatrix $\Gamma^0_{S_GS_G}=(\Gamma^0)_{S_GS_G}$ is invertible and
   (ii) $\tilde\rho_G(M^0)<1$.

   Since $\Sigma^{0}$ is diagonal, plugging it into the coefficient matrix
   from~\eqref{eq:ASigma} gives a symmetric matrix with entries
\begin{align*}
    A(\Sigma^{0})_{(i,j),(k,l)}=
    \begin{cases}
    2/d_{l} & \quad \text{if } i=j=k=l,\\
    1/d_{l} & \quad \text{if } i=k,\, j=l \text{ and } k\neq l,\\
    1/d_{l} & \quad \text{if } i=l,\, j=k \text{ and } k\neq l,\\
    0 & \quad \text{otherwise}.
    \end{cases}
\end{align*}
The entries of the Gram matrix $\Gamma^{0}=\Gamma(\Sigma^{0})$ are the
inner products of the columns of $A(\Sigma^{0})$.  That is,
\begin{align*}
    \Gamma^{0}_{(i,j),(k,l)}=
    \begin{cases}
    4/d_{l}^2 & \quad \text{if } i=j=k=l,\\
    2/d_{l}^2 & \quad \text{if } i=k,\, j=l \text{ and } k\neq l,\\
    2/(d_{k}d_{l}) & \quad \text{if } i=l,\, j=k \text{ and } k\neq l,\\
    0 & \quad \text{otherwise}.
    \end{cases}
\end{align*}
Note that the only off-diagonal entries in $\Gamma^{0}$ occur when the row index
is $(i,j)$ and the column index is $(j,i)$ with $i \neq j$.  We
display the matrices $A(\Sigma^0)$ and $\Gamma^0$ for a graph with
$p=3$ nodes
in Example~\ref{ex:diagM0:p3}.

\emph{Case I: Graph contains a two-cycle}.  Suppose $G$ contains a
two-cycle, say $k\to l\to k$ with $k\not=l$.  The two edges on the cycle index two
columns of $A(\Sigma^0)$ that are linearly dependent.  Indeed, the column
indexed by $(k,l)$ has only two 
nonzero entries in rows $(k,l)$ and $(l,k)$, both of which are equal to $d_l$, and the same holds
for the column indexed 
$(l,k)$ except that the common value of its two nonzero entries is
$d_k$.   The columns $(k,l)$ and $(l,k)$ of $\Gamma^0$ are
similarly linearly dependent.  Hence, the submatrix
$\Gamma^{0}_{S_GS_G}$ fails to be invertible, if the graph $G$
contains a two-cycle.  Consequently, the irrepresentability condition
holds uniformly over a neighborhood of $M^0$ only if $G$ is free of
two-cycles, in which case we call $G$ \emph{simple}.

\emph{Case II. Graph is simple}.  In the rest of the proof suppose
that $G$ is simple.  In this case, the submatrix 
$\Gamma^{0}_{S_GS_G}$ is diagonal with entries
\[
  \Gamma^{0}_{(k,l),(k,l)} =
  \begin{cases}
    4/d_{l}^2 & \quad \text{if } k=l,\\
    2/d_{l}^2 & \quad \text{if } k\neq l,
  \end{cases}  
\]
where $l\to k$ is an edge of $G$.
The second submatrix of interest, $\Gamma^{0}_{S_{G}^{c} S_{G}}$, also
has only one nonzero entry in each column.  If $l\to k$ is an edge,
indexing column $(k,l)$, then the entry is
\begin{align*}
  (\Gamma^{0}_{S_{G}^{c} S_{G}})_{(l,k),(k,l)}= 2/(d_{k} d_{l}).
\end{align*}
Note that $G$ being simple implies that $k\to l$ is not an edge of
$G$.  Multiplying the second submatrix to the inverse of the first, we obtain that
\begin{align*}
&(\Gamma^{0}_{S_G^{c}S_G} (\Gamma^{0}_{S_GS_G})^{-1})_{(i,j),(l,k)}\\
    &= \begin{cases}
        d_{k}/d_{l} & \quad  \text{if } (i,j)=(k,l) \text{ and } \,(l,k) \in S_{G},\,(k,l) \in S_{G}^{c}, \\
        0 & \quad \text{otherwise}.
    \end{cases}
\end{align*}
Since $\tilde{\rho}_G(M^0)$ is obtained via the maximum absolute row
sum, we have \linebreak $\tilde{\rho}_G(M^0) < 1$ if and only if $d_i/d_j <1$ for
all pairs $(j,i)\in S_G$, or equivalently, all edges $i \to j\in E$, as the theorem claims. If $G$ contains a cycle of at least length 3, there exists a sequence of edges in $E$ such that $i_{1}\to i_{2} \to i_{3} \to i_{m}\to i_{1}$ with $i_{1}, \dots, i_{m} \in V$. Then, we have $\tilde{\rho}_G(M^0) < 1$ if and only if 
\begin{align*}
d_{i_1}/d_{i_2} < 1, \quad  d_{i_2}/d_{i_3} < 1, \quad \dots \quad d_{i_{m-1}}/d_{i_{m}} < 1, \quad d_{i_m}/d_{i_{1}} < 1.
\end{align*}
Multiplying yields
\begin{align*}
d_{i_1}/d_{i_2} \cdot  d_{i_2}/d_{i_3} \cdot \, \dots \, \cdot d_{i_{m-1}}/d_{i_{m}} \cdot d_{i_m}/d_{i_{1}} =1
\end{align*}
which contradicts that all individual quotients are smaller than one. 
\end{proof}


We illustrate the matrix calculations in the proof of
Theorem~\ref{thm:oderingdiagonal} for a graph on $p=3$ nodes.
\begin{example}
\label{ex:diagM0:p3}
We consider the 3-chain $G=(V,E)$ displayed in
Figure~\ref{fig:examplegraph}, and the matrices
\begin{align*}
    M^0&=\text{diag}(-d_1,-d_2,-d_3) \quad\text{and}\quad
\Sigma^{0}=\text{diag}(1/d_1,1/d_2,1/d_3).
\end{align*}
Ordering rows as \\
$(1,1),(1,2),(1,3),(2,1),(2,2),(2,3),(3,1),(3,2),(3,3)$
and columns as \\
$(1,1),(2,1),(3,1),(1,2),(2,2),(3,2),(1,3),(2,3),(3,3)$, we find
\begin{align*}
A(\Sigma^{0})=
\begin{pmatrix}
            2/d_1 &0&0&0 & 0 &0 & 0 & 0 & 0 \\
            0 & 1/d_1 &0& 1/d_2  & 0 &0& 0 & 0 &0\\
            0 &0&1/d_1& 0 & 0 & 0& 1/d_3 & 0 & 0\\
            0 & 1/d_1 &0& 1/d_2  & 0 &0& 0 & 0 &0\\
            0 &0& 0& 0 & 2 /d_2 &0& 0 & 0 & 0 \\
            0 & 0& 0& 0 & 0& 1/d_2   & 0 & 1/d_3 & 0\\
            0 &0&1/d_1& 0 & 0 & 0& 1/d_3 & 0 & 0\\
            0 & 0& 0& 0 & 0& 1/d_2   & 0 & 1/d_3 & 0\\
            0 & 0& 0& 0 & 0& 0 & 0 & 0 & 2/d_3\\
\end{pmatrix}            
\end{align*}
and for $\Gamma^{0}$ using the labelling $(1,1),(2,1),(3,1),(1,2),(2,2),(3,2),(1,3),(2,3),(3,3)$ both for rows and columns we obtain
\begin{align*}
\begin{pmatrix}
            4/d_1^2 &0&0&0 & 0 &0 & 0 & 0 & 0 \\
            0 & 2/d_1^{2} &0& 2/d_1 d_2  & 0 &0& 0 & 0 &0\\
            0 &0&2/d_1^{2}& 0 & 0 & 0& 2/d_1 d_3 & 0 & 0\\
            0 & 2/d_1 d_2 &0&  2/d_2^{2} & 0 &0& 0 & 0 &0\\
            0 &0& 0& 0 & 4 /d_2^{2} &0& 0 & 0 & 0 \\
            0 & 0& 0& 0 & 0& 2/d_2^{2}   & 0 & 2/d_2 d_3 & 0\\
            0 &0&2/d_1 d_3 & 0 & 0 & 0& 2/d_3^{2}& 0 & 0\\
            0 & 0& 0& 0 & 0& 2/d_2 d_3   & 0 &  2/d_3^{2} & 0\\
            0 & 0& 0& 0 & 0& 0 & 0 & 0 & 4/d_3^{2}\\
\end{pmatrix}.         
\end{align*}
Since 
\begin{align*}
    &S_G=\lbrace (1,1),(2,1),(2,2),(3,2),(3,3) \rbrace \quad \text{and} \\
    &S_G^c= \lbrace (3,1),(1,2),(1,3),(2,3) \rbrace
\end{align*}
we obtain
\begin{align*}
    (\Gamma^{0}_{S_GS_G})^{-1}
    &=\diag(d_1^{2}/4,d_1^{2}/2,d_2^2/4,d_2^2/2,d_3^{2}/4),\\
    \Gamma^{0}_{S_G^cS_G}
    &=
    \begin{pmatrix}
         0 & 0 & 0 & 0 & 0\\
         0 & 2/d_1d_2 & 0 & 0 &0\\
         0 & 0 & 0 & 0 & 0\\
         0 & 0& 0 &2/d_2d_3 &0
    \end{pmatrix},
\intertext{and}    
\Gamma^{0}_{S_G^{c}S_G} (\Gamma^{0}_{S_GS_G})^{-1} &=
\begin{pmatrix}
     0 & 0 & 0 & 0 & 0\\
         0 & d_1/d_2 & 0 & 0 &0\\
         0 & 0 & 0 & 0 & 0\\
         0 & 0& 0 &d_2/d_3 &0
\end{pmatrix}.
\end{align*}
To have $\opnorm{\Gamma^{0}_{S_G^{c}S_G}
  (\Gamma^{0}_{S_GS_G})^{-1}}{\infty}<1$, we need $d_{1}/d_{2}<1$ and
$d_{2}/d_{3}<1$. With the edges $1\to 2$ and $2 \to 3$ present in $G$,
this requirement coincides with the statement of Theorem~\ref{thm:oderingdiagonal}.
\end{example}

\subsection{Necessity of the Weak Irrepresentability Condition}
\label{sec:weakirr}

In Theorem~\ref{thm:probsupport}  we show that the irrepresentability condition 
\begin{equation*}
        \opnorm{\Gamma^{*}_{S^{c}S}(\Gamma^{*}_{SS})^{-1}}{\infty} \leq (1-\alpha), \quad \alpha \in (0,1)
\end{equation*}

is sufficient for model selection consistency. As we show in the subsequent Proposition, a weaker version of the condition is indeed necessary for model selection consistency. 
\begin{definition}
Let $M^{*} \in \mathrm{Stab}_{p}$ and $S=S(M)$ its corresponding support set. Then, the weak irrepresentability condition is fulfilled if    
\begin{equation}
\label{eq:weakirrcond}
        \|\Gamma^{*}_{S^{c}S}(\Gamma^{*}_{SS})^{-1}\sign(\vsM)_{S}\|_{\infty} < 1.
\end{equation}
\end{definition}

We would like to address a small subtlety regarding the relation of the irrepresentability condition to the weak irrepresentability condition.
\begin{remark}
\label{rem:weakreltonormal}
    Consider a matrix $M^{*} \in \mathrm{Stab}_{p}$ fulfilling the irrepresentability condition \eqref{eq:irrcond}, then it also fulfills the weak irrepresentability condition \eqref{eq:weakirrcond}. The reasoning is that by multiplying $\Gamma^{*}_{S^{c}S}(\Gamma^{*}_{SS})^{-1}$ with $\sign(\vsM)_{S}$ the absolute values of the entries in a row of $\Gamma^{*}_{S^{c}S}(\Gamma^{*}_{SS})^{-1}$ are added up in the ``worst case''. By applying $\norm{\cdot}_{\infty}$ the maximum value is chosen. That is exactly what  $\opnorm{\Gamma^{*}_{S^{c}S}(\Gamma^{*}_{SS})^{-1}}{\infty}$ is. 
\end{remark}

If the slightly weaker condition \eqref{eq:weakirrcond} is violated and the entries in the drift matrix fulfill a minimal signal strength condition, we cannot recover the correct support asymptotically.  
\begin{proposition} 
\label{prop:irrnec}
    Consider the setting of Corollary~\ref{thm:probsupport}. Let $M^{*} \in \mathrm{Stab}_{p}$ with $S=S(M^{*})$ such that
    \begin{align*}
        \underset{\underset{(j,k) \in S}{1\leq j <k \leq p}}{\min}|M^{*}_{jk}|>\frac{2c_{\Gamma^{*}}}{2-\alpha}\lambda
    \end{align*}
    holds and that the weak irrepresentability condition \eqref{eq:weakirrcond} is violated. For a fixed positive definite matrix $C$, the equilibrium distribution for $M^{*}$ is given by $\mathcal{N}(0,\Sigma^{*})$. Let $X_1, \dots, X_p \in \mathbb{R}^{p}$ be an i.i.d sample of centered observations and let 
    \begin{equation*}
        \hat{\Sigma}^{n} = \frac{1}{n} \sum_{i=1}^{n} X_{i} X_{i}^{\top}
    \end{equation*}
    be the sample covariance. We denote the estimate obtained by the Direct Lyapunov Lasso \eqref{eq:Frobeniuseq} using $\hat{\Sigma}^{n}$ by $\hat{M}^{n}$.  Then it holds that 
    \begin{align*}
        \mathbb{P}(S(\hat{M}^{n})=S(M^{*})) \rightarrow 0 \qquad \text{ for } n \rightarrow \infty 
    \end{align*}
 \end{proposition}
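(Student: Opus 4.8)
The plan is to argue through the dual certificate furnished by the stationarity (KKT) conditions and to show that, were the support recovered, the \emph{population} weak irrepresentability quantity would be forced to be at most one, contradicting the hypothesis. We work in the standard regime $\lambda=\lambda_n\to 0$ with $\sqrt{n}\,\lambda_n\to\infty$, which is compatible with the lower bound on $\lambda$ in Theorem~\ref{thm:probsupport} and with the minimal-signal assumption, and is the only regime in which exact support recovery can occur. It is convenient to show directly that $\{\hat S^n=S\}$ is contained, up to events of vanishing probability, in an impossible event. On $\{\hat S^n=S\}$ the estimate $\hat M^n$ is optimal with support exactly $S$, so by the KKT system~\eqref{eq:KKT} and Lemma~1 of \citet{wainwright2009} there is a subgradient $\hat z$ with $\hat z_S=\sign(\vhM_S)$, $\vhM_{S^c}=0$, and the dual-feasibility bound $\norm{\hat z_{S^c}}_\infty\le 1$.

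First I would eliminate $\vhM_S$. Since $\sGamma_{SS}$ is invertible and $\hGamma\to\sGamma$, $\hg\to\sg$ as $\hSigma^n\to\sSigma$ (the entries of $\Gamma$ and $g$ are polynomial in $\Sigma$ by Lemma~\ref{lem:gammakronecker}), the block $\hGamma_{SS}$ is invertible with probability tending to one, exactly as in Step~1 of the proof of Theorem~\ref{thm:deterministicres}. Solving the $S$-block of~\eqref{eq:KKT} gives $\vhM_S=\hGamma_{SS}^{-1}(\hg_S-\lambda_n\hat z_S)$, and substituting into the $S^c$-block yields the representation
\begin{equation*}
\hat z_{S^c}=\hGamma_{S^cS}\hGamma_{SS}^{-1}\hat z_S+\frac{1}{\lambda_n}F_n,\qquad F_n:=\hg_{S^c}-\hGamma_{S^cS}\hGamma_{SS}^{-1}\hg_S.
\end{equation*}

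Next I would pass to the limit in each term. Because $M^*$ solves the Lyapunov equation, the population identity $\sGamma\,\vsM=\sg$ holds; restricting to blocks and using $\vsM_{S^c}=0$ gives $\sg_S=\sGamma_{SS}\vsM_S$ and $\sg_{S^c}=\sGamma_{S^cS}\vsM_S$, so that the population analogue of $F_n$ vanishes, $\sg_{S^c}-\sGamma_{S^cS}(\sGamma_{SS})^{-1}\sg_S=0$. As $F_n$ is a smooth function of $\hSigma^n$ vanishing at $\sSigma$ (matrix inversion being smooth at the invertible $\sGamma_{SS}$), and $\opnorm{\hSigma^n-\sSigma}{2}=O_P(1/\sqrt n)$ by Theorem~\ref{thm:spectraltheo}, we obtain $F_n=O_P(1/\sqrt n)$ and hence $F_n/\lambda_n\to 0$ in probability from $\sqrt n\,\lambda_n\to\infty$. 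For the main term I would first pin down the signs: the same $S$-block identity, together with $\norm{\hat z_S}_\infty\le1$ and $\lambda_n\to 0$, shows that on $\{\hat S^n=S\}$ one has $\vhM_S=\hGamma_{SS}^{-1}\hg_S-\lambda_n\hGamma_{SS}^{-1}\hat z_S\to\sGamma_{SS}^{-1}\sg_S=\vsM_S$; by the minimal-signal hypothesis this forces $\sign(\vhM_S)=\sign(\vsM_S)$, and therefore $\hat z_S=\sign(\vsM)_S$, on a further event of probability tending to one. On this event $\hGamma_{S^cS}\hGamma_{SS}^{-1}\hat z_S=\hGamma_{S^cS}\hGamma_{SS}^{-1}\sign(\vsM)_S\to\sGamma_{S^cS}(\sGamma_{SS})^{-1}\sign(\vsM)_S$ in the max norm.

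Combining the two limits, on $\{\hat S^n=S\}$ intersected with the above events of probability tending to one we have
\begin{equation*}
\norm{\hat z_{S^c}}_\infty\ \ge\ \norm{\hGamma_{S^cS}\hGamma_{SS}^{-1}\sign(\vsM)_S}_\infty-\frac{1}{\lambda_n}\norm{F_n}_\infty\ \longrightarrow\ \norm{\sGamma_{S^cS}(\sGamma_{SS})^{-1}\sign(\vsM)_S}_\infty=:r^*.
\end{equation*}
The violation of~\eqref{eq:weakirrcond} means $r^*\ge 1$; in the generic strict case $r^*>1$ this yields $\norm{\hat z_{S^c}}_\infty>1$ on the intersection for all large $n$, contradicting the dual-feasibility bound $\norm{\hat z_{S^c}}_\infty\le 1$. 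Hence $\{\hat S^n=S\}$ is contained in the complement of those high-probability events, so $\PP(S(\hat M^n)=S)\to 0$. The main obstacle is the control of the error term $F_n/\lambda_n$: it is precisely here that the concentration rate of $\hSigma^n$---propagated to $\Gamma$ and $g$ through the Kronecker structure of Lemma~\ref{lem:gammakronecker} and the bounds of Lemmas~\ref{lem:concentrationgamma1}--\ref{lem:concentrationg}---must be balanced against the choice of $\lambda_n$, which is why the regime $\sqrt n\,\lambda_n\to\infty$ is needed. The boundary case $r^*=1$ is genuinely more delicate and would require a central-limit-level analysis of the fluctuations of $\hat z_{S^c}$ around the unit sphere.
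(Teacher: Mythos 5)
Your proof is correct and follows essentially the same route as the paper's: both pass through the KKT system on the event $\{S(\hat M^n)=S\}$, eliminate $\mathrm{vec}(\hat M^n)_S$ to obtain a representation of the dual variable $\hat z_{S^c}$, use concentration of $\hat\Sigma^n$ together with the minimal-signal condition to identify its limit as $\Gamma^*_{S^cS}(\Gamma^*_{SS})^{-1}\sign(\vsM)_S$, and then contradict dual feasibility. You are in fact more explicit than the paper on two points it leaves implicit, namely the regime $\lambda_n\to 0$, $\sqrt{n}\,\lambda_n\to\infty$ needed for the error term to vanish after division by $\lambda_n$, and the boundary case where the weak irrepresentability quantity equals exactly $1$, in which the paper's assertion that the subgradient condition is ``violated'' likewise only yields $\|\hat z_{S^c}\|_\infty\to 1$ and does not by itself contradict $\|\hat z_{S^c}\|_\infty\le 1$.
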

 \begin{proof}
The proof is based on the proof of Theorem~\ref{thm:deterministicres}. Since the optimization problem \eqref{eq:Frobeniuseq} is convex, the KKT - conditions 
\begin{equation}
\label{eq:KKT2}
    \hat\Gamma^{n} \text{vec}(\hat{M}^{n}) -\hat g^{n} + \lambda \hat{z}^{n} = 0,
\end{equation}
with 
\begin{align*}
    \hat{z}_{(i,j)}^{n}=\begin{cases} 
     \text{sign}(\text{vec}(\hat{M}^{n})_{(i,j)})  &\text{if } \text{vec}(\hat{M}^{n})_{(i,j)} \neq 0,\\
    \in [-1,1]   &\text{if } \text{vec}(\hat{M}^{n})_{(i,j)}=0,
    \end{cases}
\end{align*}
are necessary and sufficient for optimality of $\hat{M}^{n}$. Assume that $S(\hat{M}^{n})=S(M^{*})$. Then, $\hat{M}^{n}$  is the unique solution of the support restricted problem \eqref{eq:supportrestricted} and following the calculations in the proof of Theorem~\ref{thm:deterministicres}, the subgradient $\hat{z}_{S^{c}}^{n}$ is given by 
%
%
\begin{multline}
    \label{eq:zsc2}
    \hat{z}_{S^{c}}^{n}=
    \frac{1}{\lambda}\left[ 
    -\Gamma_{S^{c}S}^{*}(\Gamma_{SS}^{*})^{-1}((\DGamma^{n})_{SS}\text{vec}(\hat{M}^{n})_S+(\Dg^{n})_S)
    + (\DGamma^{n})_{S^{c}S}\text{vec}(\hat{M}^{n})_S \right.\\
       \left. + (\Dg^{n})_{S^{c}}+\lambda
        \Gamma^{*}_{S^{c}S}(\Gamma^{*}_{SS})^{-1} \sign(\text{vec}(\hat{M}^{n})_S)\right].
\end{multline}
We need $\|\tilde{z}_{S^{c}}^{n}\|_{\infty}<1$ for $\hat{M}^{n}$ to satisfy the KKT-condtions \eqref{eq:KKT2}. Using Lemma \ref{lem:express error in delta} together with Corollary \ref{cor:spectralcor}, we obtain that  $\Dg^{n} \overset{P}{\rightarrow} 0$ and that $\DGamma^{n} \overset{P}{\rightarrow} 0$. Moreover, the inequality \eqref{eq:Mdistance} holds for $n$ large enough for $\hat{M}^{n}$ resulting in 
\begin{align*}
    \norm{\mathrm{vec}(\hat{M}^n)_{S} - \vsM_{S}}_{\infty} \leq \frac{2c_{\Gamma^{*}}}{2-\alpha} \lambda.
\end{align*}
Then, we obtain for the weak irrepresentability condition that 
\begin{align*}
    \Gamma^{*}_{S^{c}S}(\Gamma^{*}_{SS})^{-1} \sign(\mathrm{vec}(\hat{M}^{n})_S) =  \Gamma^{*}_{S^{c}S}(\Gamma^{*}_{SS})^{-1} \sign(\vsM_S).
\end{align*}
 Therefore, we obtain 
\begin{align*}
   \|\tilde{z}_{S^{c}}^{n}\|_{\infty} \overset{P}{\rightarrow}  \|\Gamma^{*}_{S^{c}S}(\Gamma^{*}_{SS})^{-1} \sign(\vsM)_S)\|_{\infty} \geq 1.
\end{align*}
Asymptotically, the subgradient condition is violated for $\hat{M}^{n}$ with $S(\hat{M}^{n})=S(M^{*})$ and probability 1. Hence,
    \begin{align*}
        \mathbb{P}(S(\hat{M}^{n})=S(M^{*})) \rightarrow 0 \qquad \text{ for } n \rightarrow \infty .
    \end{align*}
 \end{proof}

\begin{remark}
    It is also easily possible to construct drift matrices fulfilling the weak irrepresentability condition \eqref{eq:weakirrcond}.
The same construction as in Theorem~\ref{thm:oderingdiagonal} can be used.
\end{remark}

\subsection{Simulation Studies: Irrepresentability Condition vs. Weak Irrepresentability Condition}
\label{sec:SimStudiesweakvsnormal}

In this section, we want to answer two urgent questions. 
We have shown that for every DAG there exist non-trivial stable
drift matrices such that the irrepresentability condition \eqref{eq:irrcond} holds. The same is possible for the weak irrepresentability condition \eqref{eq:weakirrcond}. These
signals were constructed to be in a neighborhood of diagonal matrices
whose diagonal entries are ordered in accordance with the topological
ordering of the DAG.  As the size of the graphs increases, this
diagonal ordering becomes more restrictive.  Moreover, there might be
signals that have a different diagonal ordering, but still fulfill the
irrepresentability condition. Therefore, the first question is how often the conditions are fulfilled when selecting random drift matrices according to a predetermined distribution.


Given a graph $G=(V,E)$, we generate signals
$M^{*} \in \mathrm{Stab}_p(E)$ by drawing from the uniform
distribution on the subset of matrices in $\mathrm{Stab}_p(E)$ that
have all entries in $[-1,1]$.  The sampling is carried out by rejection
sampling, with rejection of matrices that are not stable.

We consider connected graphs with $p= 2,3,4 $ nodes and at most $p(p+1)/2$ edges.  This
includes all DAGs but also many cyclic graphs.
Furthermore, we only consider one labeling of vertices for every
graph.  For every graph, we check for one million simulated signals $M^{*}$ if $\rho(M^{*})<1$ and store the signals that meet the irrepresentability condition \eqref{eq:irrcond}. The frequency of signals that meet the irrepresentability condition is shown in Figure \ref{fig:irr}.

\begin{figure}[t]
    \centering
    \includegraphics[width=1\textwidth]{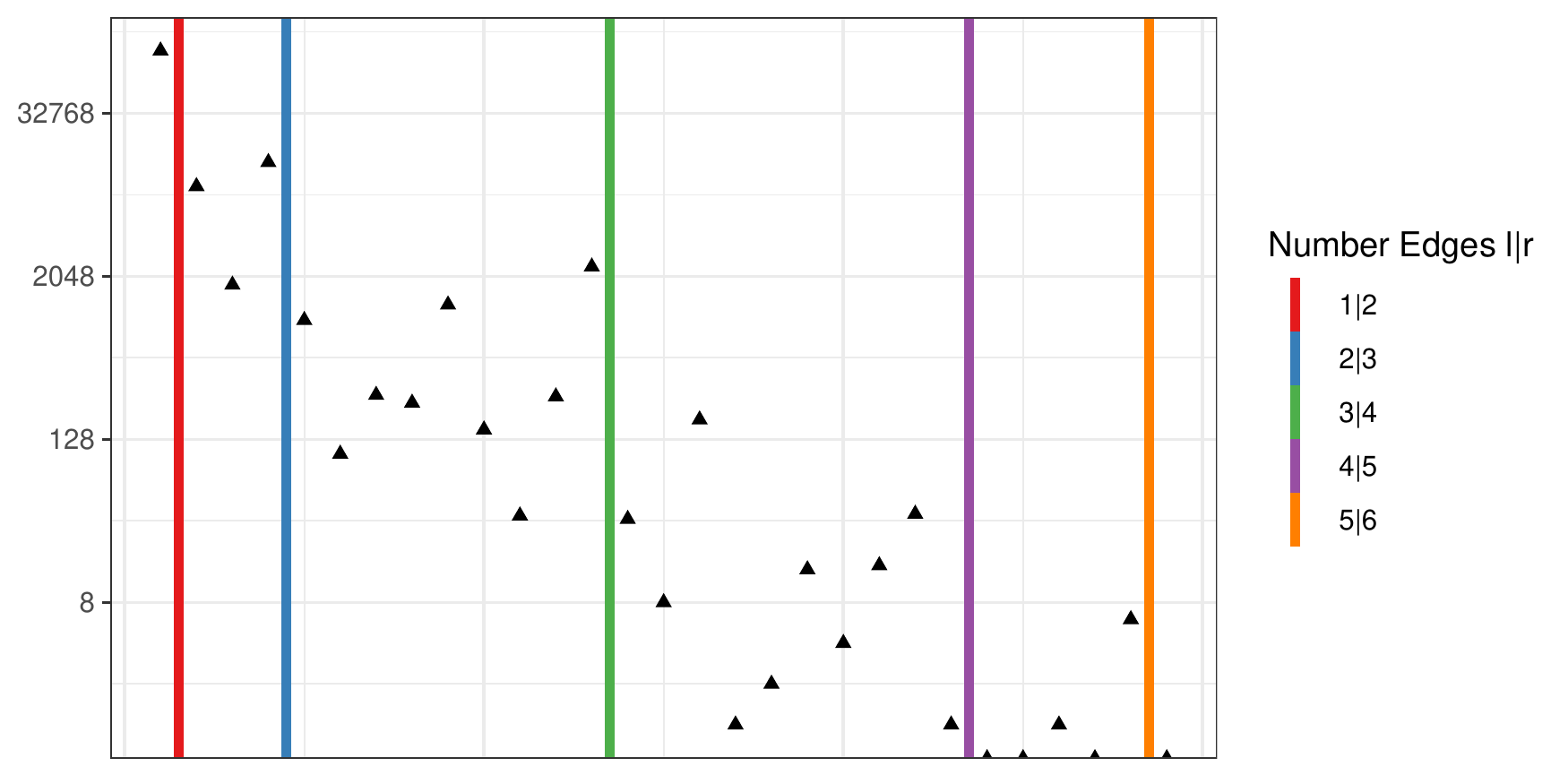}
    \caption{Frequency of the irrepresentability condition \eqref{eq:irrcond} being met
      for one million simulated stable matrices $M^{*}$ for DAGs up to
      4 nodes.  The number of edges is given by the coloring.}
    \label{fig:irrnosign}
\end{figure}

Overall, the frequency with which the irrepresentability condition is fulfilled decreases with an increasing number of edges. The decrease is not monotonic in the number of edges as the restrictiveness is tied to whether an edge adds a new condition on the quotient of the diagonal elements as presented in Theorem~\ref{thm:oderingdiagonal}. An investigation of the drift matrices in Figure~\ref{fig:irrnosign} shows that those fulfilling the irrepresentability condition \eqref{eq:irrcond} all posses a diagonal ordering according to our theoretical result.
\begin{example}
\label{ex:graph3edgesfulirr}
  Consider the graph shown in Figure \ref{fig:graph13}. Drift matrices supported over this graph have the highest frequency of irrepresentability among the graphs
  with three edges in Figure~\ref{fig:irrnosign}. 
  Since there is no edge between the nodes $\lbrace
  1,2,3 \rbrace$, the only conditions on the diagonal are $d_1/d_4<1,
  d_2/d_4 <1$ and $d_3/d_4<1$. Translated this means that $d_{4}$ has to be bigger than $d_{1},d_{2},d_{3}$. 
  \begin{figure}[t]
\centering
\begin{tikzpicture}[->,every node/.style={circle,draw},line width=1pt, node distance=1.5cm, baseline={-.5*\ht\strutbox+.5*\dp\strutbox}]
\vspace*{-1cm}
  \node (1) at (-1,-1) {$1$};
  \node (2) at (1,-1) {$2$};
  \node (3) at (1,1) {$3$};
  \node (4) at (-1,1) {$4$};
\foreach \from/\to in {1/4,2/4,3/4}
\draw (\from) -- (\to);   
\end{tikzpicture}
\caption{The graph on three nodes with highest frequency of simulated
  signals satisfying irrepresentability. 
}
\label{fig:graph13}
\end{figure}

Following Theorem~\ref{thm:deterministicres}, the conditions on the diagonal elements for drift matrices supported over Figure~\ref{fig:Path14} are
$d_{1}/d_{2}<1, d_{2}/d_{3}<1$ and $d_{3}/d_{4}<1$. In particular, these conditions also contain the requirement that 
$d_{4}$ has to be bigger than $d_{1},d_{2},d_{3}$. In addition, they contain the requirement that $d_{3}$ has to be bigger than $d_{1},d_{2}$ and that $d_{2}$ has to be bigger than $d_{1}$. 
\begin{figure}[t]
\begin{center}
\medskip
\begin{tikzpicture}[->,every node/.style={circle,draw},line width=1pt, node distance=1.5cm]
  \node (1)  {$1$};
  \node (2) [right of=1]{$2$};
  \node (3) [right of=2] {$3$};
  \node (4) [right of=3] {$4$};
\foreach \from/\to in {1/2,2/3,3/4}
\draw (\from) -- (\to);   
 \end{tikzpicture}  
\end{center}
\caption{The path from 1 to 4.}
\label{fig:Path14}
\end{figure}

\end{example}

Another important observation is that the condition is extremely restrictive when selecting stable drift matrices according to a uniform distribution. In Figure~\ref{fig:irrnosign} we observe that already if a graph on 4 nodes has 3 or more edges, the irrepresentability condition is only fulfilled in less than 1 \% of the cases. There even exist some graphs for which the irrepresentability condition is never met. These graphs are displayed in Table~\ref{tab:irrconditioncriticalgraphs}. We tried to find stable drift matrices by applying the above mentioned selection procedure ten million times to these critical graphs. For only two of the graphs we were able to select drift matrices fulfilling the irrepresentability condition. Theorem~\ref{thm:oderingdiagonal} guarantees that there must exist stable drift matrices supported over the two remaining graphs. Using Theorem~\ref{thm:oderingdiagonal}, we put one choice for each of the two graphs in red in Table~\ref{tab:irrconditioncriticalgraphs}.

\begin{longtable}{b{.12\textwidth} | b{.78\textwidth} }
\caption{\textbf{Left:} The four graphs where none of the one million randomly selected drift matrices $M$ fulfilled the irrepresentability condition in Figure \ref{fig:irrnosign}. \textbf{Right:} Drawing another ten million drift matrices, we obtain for the second and third graph drift matrices that fulfill the irrepresentability condition (black). For the first and fourth graph, we use Theorem \ref{thm:oderingdiagonal} to construct drift matrices that fulfill the irrepresentability condition (red).
\label{tab:irrconditioncriticalgraphs}}


\\ 
\hline

\vspace{0.1cm}

\begin{tikzpicture}[->,every node/.style={circle,draw},line width=1pt, node distance=1.5cm, baseline={-.5*\ht\strutbox+.5*\dp\strutbox}]
  \node[scale=0.8] (1) at (-0.4,-0.4) {$1$};
  \node[scale=0.8] (2) at (0.4,-0.4) {$2$};
  \node[scale=0.8] (3) at (0.4,0.4) {$3$};
  \node[scale=0.8] (4) at (-0.4,0.4){$4$};
\foreach \from/\to in {4/1}
\draw (\from) -- (\to); 
\foreach \from/\to in {4/1,1/2,3/2,4/2,1/3}
\draw[color=black] (\from) -- (\to); 
\end{tikzpicture}

&

\footnotesize
\textcolor{red}{$ 
\begin{pmatrix}
-0.5 & 0 & 0 & 0.05 \\ 
0.05 & -1 & 0.05 & 0.05 \\ 
0.05 & 0 & -0.75 & 0 \\ 
0 & 0 & 0 & -0.25
\end{pmatrix}
$}

\\

\hline
\vspace{0.1cm}

\begin{tikzpicture}[->,every node/.style={circle,draw},line width=1pt, node distance=1.5cm,
      baseline={-.5*\ht\strutbox+.5*\dp\strutbox}
    ]
  \node[scale=0.8] (1) at (-0.4,-0.4) {$1$};
  \node[scale=0.8] (2) at (0.4,-0.4) {$2$};
  \node[scale=0.8] (3) at (0.4,0.4) {$3$};
  \node[scale=0.8] (4) at (-0.4,0.4){$4$};
\foreach \from/\to in {2/1,4/1,4/2,1/3,2/3}
\draw[color=black] (\from) -- (\to); 
\end{tikzpicture}

&\footnotesize
$
\begin{pmatrix}
-0.584860503 &  0.03949857  & 0.0000000 & -0.05605342 \\ 
  0.000000000 & -0.35729470 &  0.0000000 & -0.00303305 \\ 
0.005031837 & -0.08209815 & -0.7782385 &  0.00000000 \\ 
 0.000000000 &  0.00000000 &  0.0000000 & -0.22854795 
\end{pmatrix}
$

\vspace{0.1cm}

\\ 
\hline

\vspace{0.1cm}

\begin{tikzpicture}[->,every node/.style={circle,draw},line width=1pt, node distance=1.5cm, baseline={-.5*\ht\strutbox+.5*\dp\strutbox}]
  \node[scale=0.8] (1) at (-0.4,-0.4) {$1$};
  \node[scale=0.8] (2) at (0.4,-0.4) {$2$};
  \node[scale=0.8] (3) at (0.4,0.4) {$3$};
  \node[scale=0.8] (4) at (-0.4,0.4){$4$};
\foreach \from/\to in {4/1}
\draw (\from) -- (\to); 
\foreach \from/\to in {3/1,4/1,1/2,4/2,4/3}
\draw[color=black] (\from) -- (\to); 
\end{tikzpicture}

&

\footnotesize
$ 
\begin{pmatrix}
-0.7388917 & 0.0000000 & -0.1277403 & 0.01491351 \\ 
-0.1184546 & -0.9615896 & 0.0000000 & -0.09631827 \\ 
0.0000000 & 0.0000000 & -0.4652617 & 0.04858871 \\ 
0.0000000 & 0.0000000 & 0.0000000 & -0.23807701
\end{pmatrix}
$

\vspace{0.1cm}
\\ 
\hline
\vspace{0.1cm}

\begin{tikzpicture}[->,every node/.style={circle,draw},line width=1pt, node distance=1.5cm, baseline={-.5*\ht\strutbox+.5*\dp\strutbox}]
  \node[scale=0.8] (1) at (-0.4,-0.4) {$1$};
  \node[scale=0.8] (2) at (0.4,-0.4) {$2$};
  \node[scale=0.8] (3) at (0.4,0.4) {$3$};
  \node[scale=0.8] (4) at (-0.4,0.4){$4$};
\foreach \from/\to in {2/1,3/1,4/1,3/2,4/2,4/3}
\draw (\from) -- (\to); 
\end{tikzpicture}
& 
\footnotesize
\textcolor{red}{$
\begin{pmatrix}
 -1 & -0.05 & 0.05 & 0.05 \\ 
  0 & -0.75 & 0.05 & 0.05 \\ 
  0 & 0 & -0.5 & 0.05 \\ 
  0 & 0 & 0 & -0.25 
\end{pmatrix}
$}
\\
\bottomrule
\end{longtable}

We also carried out the simulation study for simple cyclic graphs. None of the cyclic graphs on 4 nodes fulfilled the irrepresentability condition for ten million randomly selected drift matrices for each graph structure. This is not a proof that the irrepresentability condition \eqref{eq:irrcond} is never met for a cyclic graph, but at least a strong computational evidence. 
In a next step we carry out the same  sampling procedure for graphs on 4 nodes for the weak irrepresentability condition \eqref{eq:weakirrcond} than we did previously for the irrepresentability condition \eqref{eq:irrcond}. The results are displayed in Figure~\ref{fig:irr}.

\begin{figure}[t]
    \centering
    \includegraphics[width=1\textwidth]{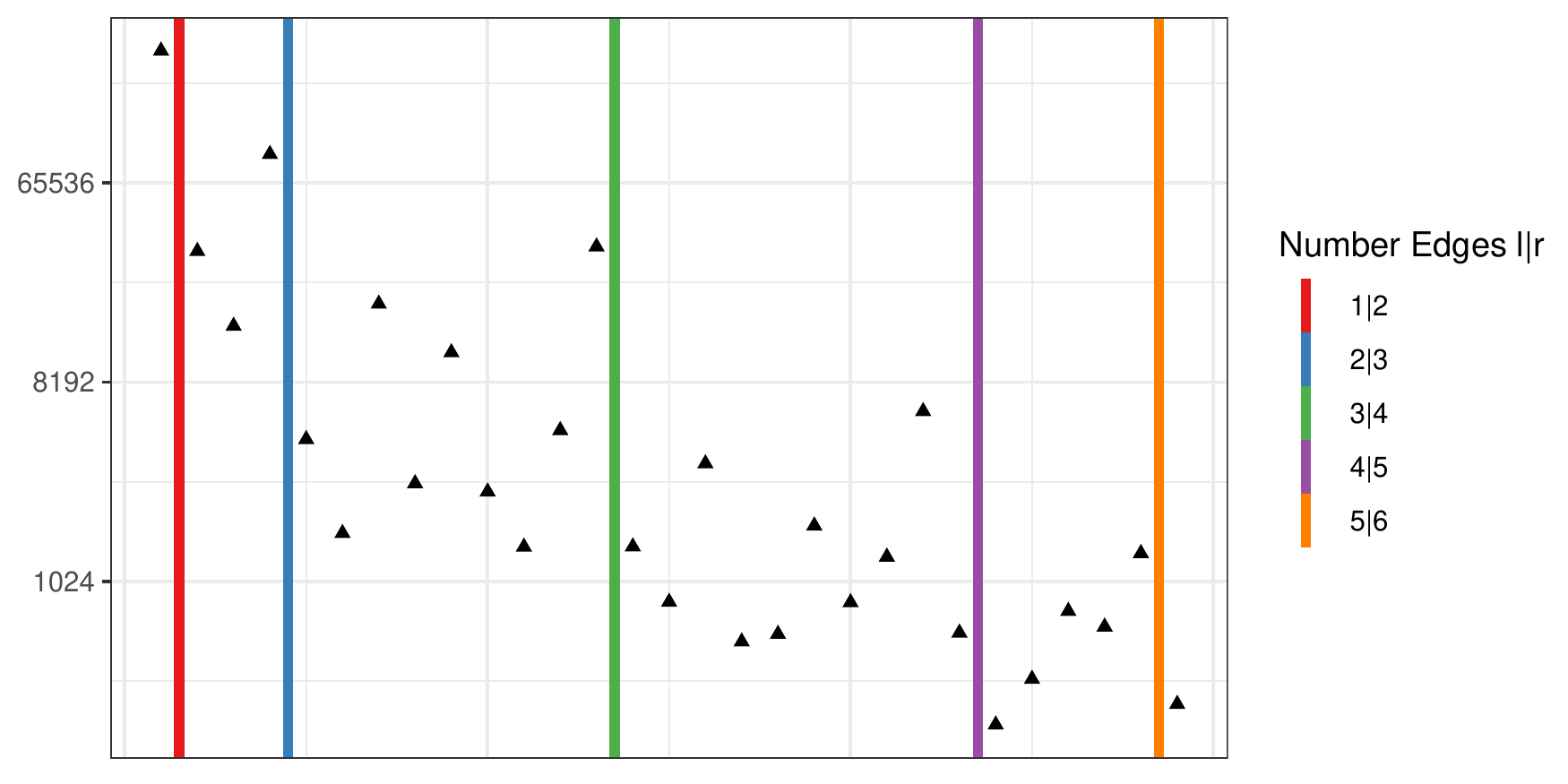}
    \caption{Frequency of the weak irrepresentability condition \eqref{eq:weakirrcond} being met
      for one million simulated stable matrices $M^{*}$ for DAGs up to
      4 nodes.  The number of edges is given by the coloring.}
    \label{fig:irr}
\end{figure}

Comparing the results in Figure~\ref{fig:irr} with those in Figure~\ref{fig:irrnosign}, we observe that the weak irrepresentability condition is much more often fulfilled than the irrepresentability condition. In particular, for all DAGs on 4 nodes there exist drift matrices fulfilling the irrepresentability condition among the one million randomly selected stable drift matrices. The frequency is also much higher. For instance, for graphs with 4 edges, the weak irrepresentability condition is met around 1000 times out of one million while the irrepresentability condition is only met around 50 times out of one million. The reason is that the sign vector in \eqref{eq:weakirrcond} enables fortunate cancellation. 

This comes in handy when carrying out the simulations for simple cyclic graphs. The overall frequency with which one obtains drift matrices supported over simple cyclic graphs fulfilling the weak irrepresentability condition is quite low compared to DAGs.
Nevertheless, we are able to find one drift matrix for every simple cyclic graph on 4 nodes. In Table~\ref{tab:cyclicsignals}, we list all cyclic graphs on 4 nodes together with examples of drift matrices that satisfy the irrepresentability condition.  The selection of graphs includes all graphs that contain at least one directed cycle and are simple (i.e., do not contain a two-cycle).

\newpage

\begin{longtable}{b{.12\textwidth} | b{.78\textwidth} }
\caption{\textbf{Left:} All simple cyclic graphs with 4 nodes, up to relabelling of the nodes. Edges on cycles are highlighted in red. \textbf{Right:} Specific choice of matrices $M$ matching the graph on the left and fulfilling the weak irrepresentability condition \eqref{eq:weakirrcond}, all entries are rounded to 10 digits.
\label{tab:cyclicsignals}}


\\

\hline
\vspace{0.1cm}

\begin{tikzpicture}[->,every node/.style={circle,draw},line width=1pt, node distance=1.5cm,
      baseline={-.5*\ht\strutbox+.5*\dp\strutbox}
    ]
  \node[scale=0.8] (1) at (-0.4,-0.4) {$1$};
  \node[scale=0.8] (2) at (0.4,-0.4) {$2$};
  \node[scale=0.8] (3) at (0.4,0.4) {$3$};
  \node[scale=0.8] (4) at (-0.4,0.4){$4$};
\foreach \from/\to in {2/1,3/2,1/3}
\draw[color=red] (\from) -- (\to); 
\end{tikzpicture}

&\footnotesize
$
\begin{pmatrix}
-0.0444620792 & -0.5733500496 & 0.0000000000 & 0.0000000000 \\ 
 0.0000000000 & -0.0153532191 & 0.0054622865 & 0.0000000000 \\ 
 0.8317033453 & 0.0000000000 & -0.8824298000 & 0.0000000000 \\ 
 0.0000000000 & 0.0000000000 & 0.0000000000 & -0.3405775614  
\end{pmatrix}
$

\vspace{0.1cm}

\\ 
\hline

\vspace{0.1cm}

\begin{tikzpicture}[->,every node/.style={circle,draw},line width=1pt, node distance=1.5cm, baseline={-.5*\ht\strutbox+.5*\dp\strutbox}]
  \node[scale=0.8] (1) at (-0.4,-0.4) {$1$};
  \node[scale=0.8] (2) at (0.4,-0.4) {$2$};
  \node[scale=0.8] (3) at (0.4,0.4) {$3$};
  \node[scale=0.8] (4) at (-0.4,0.4){$4$};
\foreach \from/\to in {4/1}
\draw (\from) -- (\to); 
\foreach \from/\to in {2/1,3/2,1/3}
\draw[color=red] (\from) -- (\to); 
\end{tikzpicture}

&

\footnotesize
$ 
\begin{pmatrix}
 -0.9780979650 & 0.1042322782 & 0.0000000000 & 0.3752107187 \\ 
  0.0000000000 & -0.7998522464 & -0.4260628200 & 0.0000000000 \\ 
  0.2079165080 & 0.0000000000 & -0.6517819995 & 0.0000000000 \\ 
  0.0000000000 & 0.0000000000 & 0.0000000000 & -0.8112314143
\end{pmatrix}
$

\vspace{0.1cm}
\\ 
\hline
\vspace{0.1cm}

\begin{tikzpicture}[->,every node/.style={circle,draw},line width=1pt, node distance=1.5cm, baseline={-.5*\ht\strutbox+.5*\dp\strutbox}]
  \node[scale=0.8] (1) at (-0.4,-0.4) {$1$};
  \node[scale=0.8] (2) at (0.4,-0.4) {$2$};
  \node[scale=0.8] (3) at (0.4,0.4) {$3$};
  \node[scale=0.8] (4) at (-0.4,0.4){$4$};
\foreach \from/\to in {1/4}
\draw (\from) -- (\to);   
\foreach \from/\to in {2/1,3/2,1/3}
\draw[color=red] (\from) -- (\to); 
\end{tikzpicture}

& 

\footnotesize
$
\begin{pmatrix}
 -0.6792729949 & -0.6022921619 & 0.0000000000 & 0.0000000000 \\ 
  0.0000000000 & -0.1733464822 & 0.5762203289 & 0.0000000000 \\ 
  0.0383909321 & 0.0000000000 & -0.1785332798 & 0.0000000000 \\ 
  0.2089620568 & 0.0000000000 & 0.0000000000 & -0.6556593408 
\end{pmatrix}
$

\vspace{0.1cm}
\\ 
\hline
\vspace{0.1cm}

\begin{tikzpicture}[->,every node/.style={circle,draw},line width=1pt, node distance=1.5cm, baseline={-.5*\ht\strutbox+.5*\dp\strutbox}]
  \node[scale=0.8] (1) at (-0.4,-0.4) {$1$};
  \node[scale=0.8] (2) at (0.4,-0.4) {$2$};
  \node[scale=0.8] (3) at (0.4,0.4) {$3$};
  \node[scale=0.8] (4) at (-0.4,0.4){$4$};
\foreach \from/\to in {3/1,2/3,4/2,1/4}
\draw[color=red] (\from) -- (\to); 
\end{tikzpicture}

&

\footnotesize
$
\begin{pmatrix}
 -0.5008390141 & 0.0000000000 & -0.3301411900 & 0.0000000000 \\ 
  0.0000000000 & -0.0754047022 & 0.0000000000 & -0.2224099669 \\ 
  0.0000000000 & 0.9894780936 & -0.8953534714 & 0.0000000000 \\ 
 -0.4568265276 & 0.0000000000 & 0.0000000000 & -0.6545859827 
\end{pmatrix}
$

\vspace{0.1cm}
\\ 
\hline
\vspace{0.1cm}

\begin{tikzpicture}[->,every node/.style={circle,draw},line width=1pt, node distance=1.5cm, baseline={-.5*\ht\strutbox+.5*\dp\strutbox}]
  \node[scale=0.8] (1) at (-0.4,-0.4) {$1$};
  \node[scale=0.8] (2) at (0.4,-0.4) {$2$};
  \node[scale=0.8] (3) at (0.4,0.4) {$3$};
  \node[scale=0.8] (4) at (-0.4,0.4){$4$};
\foreach \from/\to in {4/1,4/2}
\draw (\from) -- (\to);   
\foreach \from/\to in {2/1,3/2,1/3}
\draw[color=red] (\from) -- (\to); 
 \end{tikzpicture}  

&

\footnotesize
$
\begin{pmatrix}
 -0.9852473154 & 0.0237436080 & 0.0000000000 & -0.1801203806 \\ 
  0.0000000000 & -0.9146776730 & -0.6301784553 & -0.3625553502 \\ 
  0.0314035588 & 0.0000000000 & -0.7371845325 & 0.0000000000 \\ 
  0.0000000000 & 0.0000000000 & 0.0000000000 & -0.2936787312 \\ 
 \end{pmatrix}
$

\vspace{0.1cm}
\\ 
\hline
\vspace{0.1cm}

\begin{tikzpicture}[->,every node/.style={circle,draw},line width=1pt, node distance=1.5cm, baseline={-.5*\ht\strutbox+.5*\dp\strutbox}]
  \node[scale=0.8] (1) at (-0.4,-0.4) {$1$};
  \node[scale=0.8] (2) at (0.4,-0.4) {$2$};
  \node[scale=0.8] (3) at (0.4,0.4) {$3$};
  \node[scale=0.8] (4) at (-0.4,0.4){$4$};
\foreach \from/\to in {3/2,1/3,4/2,1/4,2/1}
\draw[color=red] (\from) -- (\to); 
\end{tikzpicture}  

&

\footnotesize
$
\begin{pmatrix}
 -0.6168078599 & -0.4643970933 & 0.0000000000 & 0.0000000000 \\ 
  0.0000000000 & -0.8265482867 & 0.0118716909 & 0.4726413568 \\ 
  0.3998511671 & 0.0000000000 & -0.8792877044 & 0.0000000000 \\ 
 -0.5496377517 & 0.0000000000 & 0.0000000000 & -0.7865214688 
\end{pmatrix}
$

\vspace{0.1cm}
\\ 
\hline
\vspace{0.1cm}

\begin{tikzpicture}[->,every node/.style={circle,draw},line width=1pt, node distance=1.5cm, baseline={-.5*\ht\strutbox+.5*\dp\strutbox}]
  \node[scale=0.8] (1) at (-0.4,-0.4) {$1$};
  \node[scale=0.8] (2) at (0.4,-0.4) {$2$};
  \node[scale=0.8] (3) at (0.4,0.4) {$3$};
  \node[scale=0.8] (4) at (-0.4,0.4){$4$};
\foreach \from/\to in {4/2,1/4}
\draw (\from) -- (\to);   
\foreach \from/\to in {2/1,3/2,1/3}
\draw[color=red] (\from) -- (\to); 
 \end{tikzpicture}  

&

\footnotesize
$
\begin{pmatrix}
-0.2066421132 & -0.0034684981 & 0.1383411973 & 0.0000000000 \\ 
 0.0000000000 & -0.9617960961 & 0.0000000000 & -0.7641737331 \\ 
 0.0000000000 & -0.3169060163 & -0.7561623598 & 0.0000000000 \\ 
 -0.7012514030 & 0.0000000000 & 0.0000000000 & -0.2419070452 \\ 
\end{pmatrix}
$

\vspace{0.1cm}
\\ 
\hline
\vspace{0.1cm}

\begin{tikzpicture}[->,every node/.style={circle,draw},line width=1pt, node distance=1.5cm, baseline={-.5*\ht\strutbox+.5*\dp\strutbox}
]
  \node[scale=0.8] (1) at (-0.4,-0.4) {$1$};
  \node[scale=0.8] (2) at (0.4,-0.4) {$2$};
  \node[scale=0.8] (3) at (0.4,0.4) {$3$};
  \node[scale=0.8] (4) at (-0.4,0.4){$4$};
\foreach \from/\to in {1/3,2/3}
\draw (\from) -- (\to);   
\foreach \from/\to in {2/1,4/2,1/4}
\draw[color=red] (\from) -- (\to); 
 \end{tikzpicture}  

&

\footnotesize
$
\begin{pmatrix}
-0.8234110032 & -0.6069790549 & 0.0000000000 & 0.0000000000 \\ 
  0.0000000000 & -0.4768311884 & 0.0000000000 & -0.5430481988 \\ 
  -0.1151224086 & 0.5541216009 & -0.8947804412 & 0.0000000000 \\ 
  -0.1818817416 & 0.0000000000 & 0.0000000000 & -0.6244826200 
\end{pmatrix}
$

\vspace{0.1cm}
\\ 
\hline
\vspace{0.1cm}

\begin{tikzpicture}[->,every node/.style={circle,draw},line width=1pt, node distance=1.5cm, baseline={-.5*\ht\strutbox+.5*\dp\strutbox}]
  \node[scale=0.8] (1) at (-0.4,-0.4) {$1$};
  \node[scale=0.8] (2) at (0.4,-0.4) {$2$};
  \node[scale=0.8] (3) at (0.4,0.4) {$3$};
  \node[scale=0.8] (4) at (-0.4,0.4){$4$};
\foreach \from/\to in {4/2,4/1,4/3}
\draw (\from) -- (\to);   
\foreach \from/\to in {2/1,3/2,1/3}
\draw[color=red] (\from) -- (\to); 
 \end{tikzpicture}  

&

\footnotesize
$
\begin{pmatrix}
-0.7566250684 & 0.1517044385 & 0.0000000000 & 0.0068894741 \\  0.0000000000 & -0.9917302341 & 0.5077337530 & 0.3153799707\\
 0.0895817326 & 0.0000000000 & -0.7472212519 &-0.1730670566\\ 0.0000000000 & 0.0000000000 & 0.0000000000 & -0.3600410065
\end{pmatrix}
$

\vspace{0.1cm}
\\ 
\hline
\vspace{0.1cm}

\begin{tikzpicture}[->,every node/.style={circle,draw},line width=1pt, node distance=1.5cm, baseline={-.5*\ht\strutbox+.5*\dp\strutbox}]
  \node[scale=0.8] (1) at (-0.4,-0.4) {$1$};
  \node[scale=0.8] (2) at (0.4,-0.4) {$2$};
  \node[scale=0.8] (3) at (0.4,0.4) {$3$};
  \node[scale=0.8] (4) at (-0.4,0.4){$4$};
\foreach \from/\to in {3/1,4/3,1/4,2/1,3/2,4/2}
\draw[color=red] (\from) -- (\to); 
 \end{tikzpicture}  

&

\footnotesize
$
\begin{pmatrix}
-0.8680259003 & 0.4557597358 & -0.0925138230 & 0.0000000000 \\ 
 0.0000000000 & -0.9139470784 & -0.1607573517 & 0.3138186112 \\ 
 0.0000000000 & 0.0000000000 & -0.9212171654 & -0.9521876550 \\ 
 -0.5101859323 & 0.0000000000 & 0.0000000000 & -0.2475099666
\end{pmatrix}
$

\vspace{0.1cm}
\\ 
\hline
\vspace{0.1cm}

\begin{tikzpicture}[->,every node/.style={circle,draw},line width=1pt, node distance=1.5cm, baseline={-.5*\ht\strutbox+.5*\dp\strutbox}]
  \node[scale=0.8] (1) at (-0.4,-0.4) {$1$};
  \node[scale=0.8] (2) at (0.4,-0.4) {$2$};
  \node[scale=0.8] (3) at (0.4,0.4) {$3$};
  \node[scale=0.8] (4) at (-0.4,0.4){$4$};
\foreach \from/\to in {1/2,3/2,4/2}
\draw (\from) -- (\to);   
\foreach \from/\to in {3/1,4/3,1/4}
\draw[color=red] (\from) -- (\to); 
 \end{tikzpicture}  
 
&
\footnotesize
$
 \begin{pmatrix}
 -0.6688544271 & 0.0000000000 & -0.7215559445 & 0.0000000000 \\ 
 -0.4272868899 & -0.9967063963 & 0.0374428187 & -0.8531300114 \\ 
  0.0000000000 & 0.0000000000 & -0.6779836947 & -0.5781906121 \\ 
 -0.6749138949 & 0.0000000000 & 0.0000000000 & -0.5980373188 
 \end{pmatrix}
$

\\
\hline

\end{longtable}

The calculations are carried out with the statistic software \texttt{R}. A natural suspicion is that these very few matrices were only selected due to numerical imprecision. In addition, one might wonder if the 10 digits are really necessary. Example~\ref{ex:rationalmatrix} provides more insight using one representative of Table~\ref{tab:cyclicsignals}.
\begin{example}
\label{ex:rationalmatrix}
For the graph in Figure~\ref{fig:3on4nodes} (or first row of Table~\ref{tab:cyclicsignals}) the matrix
\begin{figure}[t]
\centering
\begin{tikzpicture}[->,every node/.style={circle,draw},line width=1pt, node distance=1.5cm,
      baseline={-.5*\ht\strutbox+.5*\dp\strutbox}
    ]
  \node[scale=1] (1) at (-1,-1) {$1$};
  \node[scale=1] (2) at (1,-1) {$2$};
  \node[scale=1] (3) at (1,1) {$3$};
  \node[scale=1] (4) at (-1,1){$4$};
\foreach \from/\to in {2/1,3/2,1/3}
\draw[color=red] (\from) -- (\to); 
\end{tikzpicture} 
\caption{3-cycle in a four node setting.}
\label{fig:3on4nodes}
\end{figure}

\begin{align*}
M=
\begin{pmatrix}
-0.0444620792 & -0.5733500496 & 0.0000000000 & 0.0000000000 \\ 
 0.0000000000 & -0.0153532191 & 0.0054622865 & 0.0000000000 \\ 
 0.8317033453 & 0.0000000000 & -0.8824298000 & 0.0000000000 \\ 
 0.0000000000 & 0.0000000000 & 0.0000000000 & -0.3405775614  
\end{pmatrix}
\end{align*}
fulfills the weak irrepresentability condition. The margins to satisfy the weak irrepresentability condition are thin. Rounding the entries of $M$ potentially yields matrices $M$ that do not satisfy the weak irrepresentability condition. The matrix $M$ displayed in this example results in a value for the left side of \eqref{eq:weakirrcond} of 0.9960339 while the 2 - digit version yields a value of 1.011801, i.e. the longer version fulfills the  weak irrepresentability condition while the shorter version does not. This is the reason for the long displays in Table~\ref{tab:cyclicsignals}. However, for the matrix $M$ in this Example, we are able to rationalize the entries with a tolerance of 0.0001 to obtain
\begin{align*}
M_{R}=
\begin{pmatrix}
-2/45 & -43/75 & 0 & 0 \\ 
 0 & -1/65 & 1/183 & 0 \\ 
 84/101 & 0 & -15/17 & 0\\ 
 0 & 0 & 0 & -31/91  
\end{pmatrix}
\end{align*}
fulfilling the weak irrepresentability condition with all calculations being carried out rationally in \texttt{Mathematica} \citep{math2022}. This allays the concern that these matrices only exist due to numerical imprecision in the calculations.
\end{example}

Summing up the situation for simple cyclic graphs, extensive computation was necessary to present one example for every
simple cyclic graph up to four nodes. 
We were not able to discern structure that would suggest how to
construct such examples in general.
Lastly, we provide some insight into the situation with non-simple graphs. Recall that the same construction as in Theorem~\ref{thm:deterministicres} meeting the irrepresentability condition by diagonal ordering was not possible due to $\Gamma^{0}_{SS}$ being not invertible. However, the work by \cite{dettling2022id} shows that there exist drift matrices $M^{*}$ supported over non-simple graphs resulting in $\Gamma^{*}_{SS}$ being invertible. This leaves the possibility for drift matrices fulfilling the irrepresentability condition. 

We carry out the same simulation setup as for DAGs and simple cyclic graphs with the only difference that we randomly select 100000 stable drift matrices instead of one million. The reason is simply the computation time with the information gained from the results being the same. The results are plotted in Figure~\ref{fig:irrns}. If we scale the frequencies by a factor of 10 and compare them with the frequencies in Figure~\ref{fig:irr}, we observe that they are similar for the corresponding number of edges. A more careful investigation of the drift matrices shows that they fulfill the diagonal ordering of Theorem~\ref{thm:deterministicres} for the ``DAG part'' of the graph the drift matrix is supported over. Generally, we think that exploiting this observation, a theoretical result proving that irrepresentability conditions can be met for certain non-simple graphs should be possible and might be an interesting project for further research. 
\begin{figure}[t]
    \centering
    \includegraphics[width=1\textwidth]{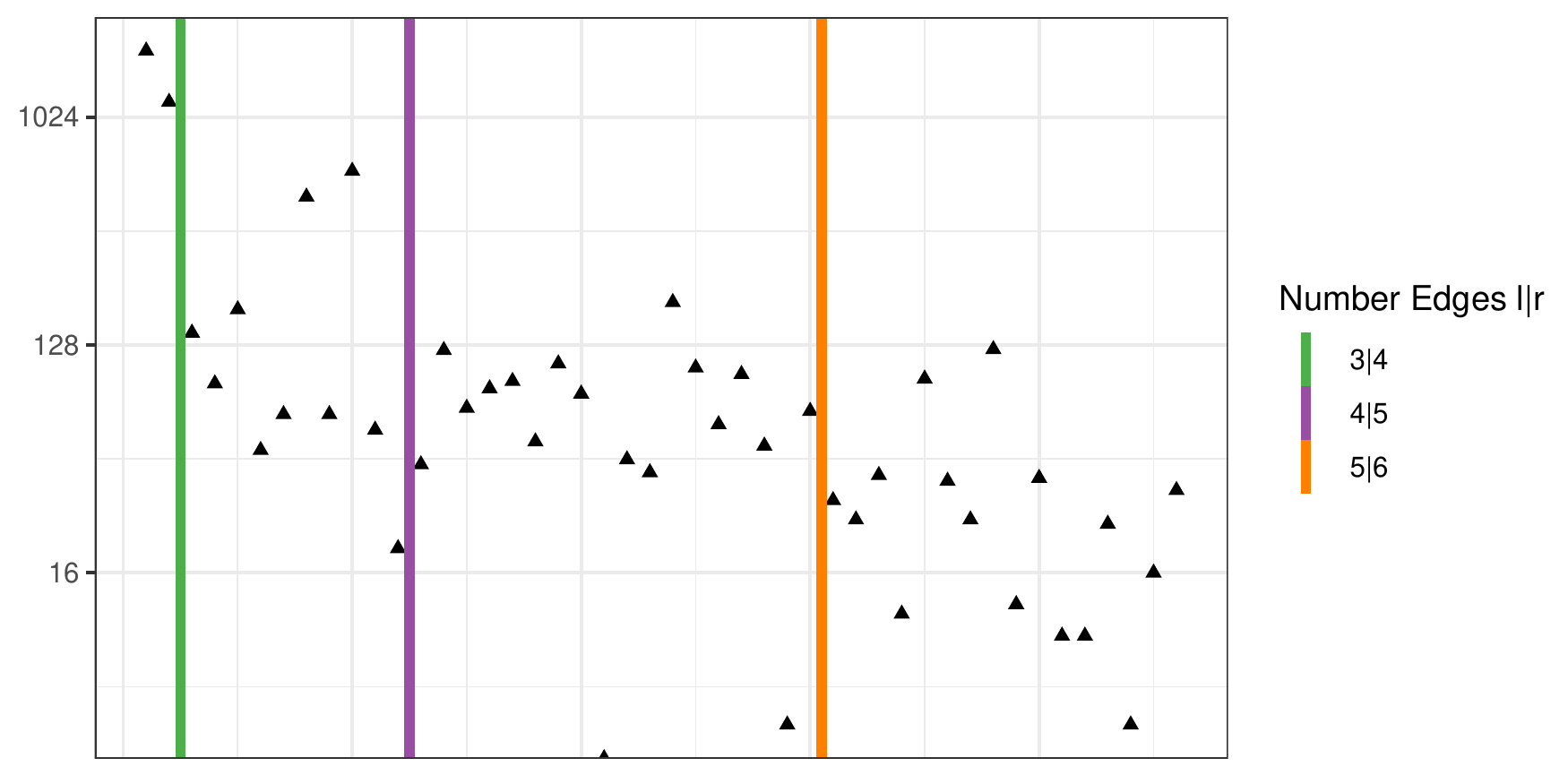}
      \caption{Frequency of the irrepresentability condition being met
        for 100,000 simulated stable matrices $M^{*}$ for non-simple graphs
        without cycles of length $\geq 3$, up to 4 nodes, and up to 6
        edges.
        The number of edges is given by the coloring.}
      \label{fig:irrns}
\end{figure}

\subsection{Simulation Studies: Impact of the Weak Irrepresentability Condition}

Corollary~\ref{thm:probsupport} ensures that if the irrepresentability condition \eqref{eq:irrcond} is fulfilled and some assumptions about minimal signal strength and sample size hold, we are able to recover the support of a drift matrix correctly when applying the Direct Lyapunov Lasso \eqref{eq:Frobeniuseq}. We were not able to  prove this for the weak irrepresentability condition \eqref{eq:weakirrcond}, only its necessity in Proposition~\ref{prop:irrnec} in case a minimal signal requirement is fulfilled. Nevertheless, the condition is quite close to the sufficient condition and fulfilled much more often as we show in Section~\ref{sec:SimStudiesweakvsnormal}. Therefore, we want to investigate the impact of the fulfillment of the weak irrepresentability condition on support recovery. The positive  computational results in this section also translate to the irrepresentability condition as every drift matrix fulfilling the irrepresentability condition also fulfills the weak irrepresentability condition, see Remark~\ref{rem:weakreltonormal}. 

For every DAG on 4 nodes, we select 10 drift matrices fulfilling the weak irrepresentability condition. The selection procedure is the same that we use to obtain Figure~\ref{fig:irr} (uniform distribution of stable matrices with entries
between -1 and 1). Furthermore, we select 100 stable drift matrices supported over the DAGs that do no necessarily fulfill the irrepresentability condition. Based on the drift matrices $M^{*}$ and the Lyapunov equation \eqref{eq:lyapunoveq} with $C= 2 I_{p}$, we calculate the equilibrium covariance matrices $\Sigma^{*}$. We then sample data with $n=100$ from the normal distributions $\mathcal{N}(0,\Sigma^{*})$. Then, we apply the Direct Lyapunov Lasso \eqref{eq:Frobeniuseq} along a regularization path
\begin{align*}
    \lambda_{1} = \lambda_{\max},\dots,\lambda_{100}= \frac{\lambda_{\max}}{10^4}
\end{align*}
where $\lambda_{\max}$ is chosen on an initial grid such that $\hat{M}$ is diagonal.
For the estimates $\hat{M}_{1},\dots,\hat{M}_{100}$ obtained along the regularization path, we calculate some basic metrics  regarding support recovery of the data generating $M^{*}$.
\begin{definition}
\label{def:basicmetrics}
    Let $\hat{M} \in \mathbb{R}^{p \times p}$ be an estimate and let $M^{*}$ be the estimation target. Then, we define 
    \begin{align*}
        tp&= | \lbrace \hat{M}_{ij}: \hat{M}_{ij} \neq 0 \, \text{and} \,  M^{*}_{ij} \neq 0 \rbrace |, \\
        fp&= | \lbrace \hat{M}_{ij}: \hat{M}_{ij} \neq 0 \, \text{and} \,  M^{*}_{ij} = 0 \rbrace |, \\
        tn&= | \lbrace \hat{M}_{ij}: \hat{M}_{ij} = 0 \, \text{and} \,  M^{*}_{ij} = 0 \rbrace |, \\
        fn&= | \lbrace \hat{M}_{ij}: \hat{M}_{ij} = 0 \, \text{and} \,  M^{*}_{ij} \neq 0 \rbrace | .
    \end{align*}
\end{definition}
While these metrics already provide some insights, there exist more refined metrics to evaluate the performance of a structure learning algorithm. 

\begin{definition}
\label{def:refinedmetrics}
     Let $\hat{M} \in \mathbb{R}^{p \times p}$ be an estimate and let $M^{*}$ be the estimation target and let \\
     $tp,fp,tn,fn$ be defined as in Definition~\ref{def:basicmetrics}. Then, we define 
     \begin{align*}
         \textbf{tpr } \text{(true positive rate)}&= \frac{tp}{tp+fn},\\
         \textbf{fpr } \text{(false positive rate)}&= \frac{fp}{fp+tn},\\
         \textbf{acc } \text{(accuracy)}&= \frac{tp+tn}{tp+tn+fp+fn},\\
         \mathbf{f_1}\textbf{-score} &= \frac{2tp}{2tp+fp+fn}, \\
         \textbf{pr } \text{(precision)} &= \frac{tp}{tp+fp}.
     \end{align*}
Calculating tpr and fpr for all regularization parameters, we define the roc curve as plotting tpr vs. fpr with fpr ranging from 0 to 1 using interpolation and extrapolation if necessary. The \textbf{auc roc} or just auc is then defined as the area under the roc curve.  Calculating pr and tpr for all regularization parameters, we define the pr curve as plotting pr vs. tpr with tpr ranging from 0 to 1 using interpolation and extrapolation if necessary. The \textbf{aupr} curve is then defined as the area under the precision curve.   
\end{definition}

For the estimates $\hat{M}_{1},\dots,\hat{M}_{100}$ obtained for each DAG and for each initial drift matrix $M^{*}$, we calculate the metrics mean tpr, mean fpr and max acc, max $f_1$-score. All metrics are then averaged out over the 10 drift matrices fulfilling the weak irrepresentability condition per DAG or over the 100 randomly selected drift matrices, respectively. The results are displayed in Figure~\ref{fig:variousmetirr}. The empty triangles correspond to the average over the randomly selected drift matrices while the full triangles correspond to the average over the drift matrices fulfilling the weak irrepresentability condition. While acc is an average over both correctly classified groups (tp,tn), the $f_1$-score is an average of tpr and precision and focuses more on the correctly identified non-zero entries (tp). The maximum is taken to show the potential of the method if the optimal $\lambda$ is chosen by a model selection method. Additionally, we provide the mean tpr and mean fpr over the regularization path. They are supposed to provide insight into the overall performance of the Direct Lyapunov Lasso over the full regularization path focusing on the correctly or wrongly identified non-zero entries. Despite the weak irrepresentability condition not being proven sufficient, we observe that the max acc and the max $f_1$-score are one for almost all the DAGs. This indicates that there is at least one value of $\lambda$ giving the correct support. Even the few graphs for which these two metrics are not one achieve very high values of around 0.9. Numerical imprecisions or particularly small entries in the data generating signal $M^{*}$ might prevent perfect support recovery for these cases. Contrary to the drift matrices fulfilling the weak irrepresentability condition, the randomly selected signals perform much worse and oftentimes only achieve a value of 0.5 for max acc and max $f_1$-score. That is as good as random guessing. There are a few DAGs with a smaller number of edges where the differences between the two groups is not that severe. The reason is that among these randomly selected drift matrices are more that fulfill the weak irrepresentability condition as the smaller number of edges imposes less contraints on the diagonal ordering, see Theorem~\ref{thm:oderingdiagonal}. Moreover, sparser structures seem to be more favorable when aiming for support recovery. This natural behaviour is also visible in the simulation studies in Section~\ref{sec:robust} and Section~\ref{sec:realworlddata}.

\begin{figure}[t]
    \centering
    \includegraphics[width=1\textwidth]{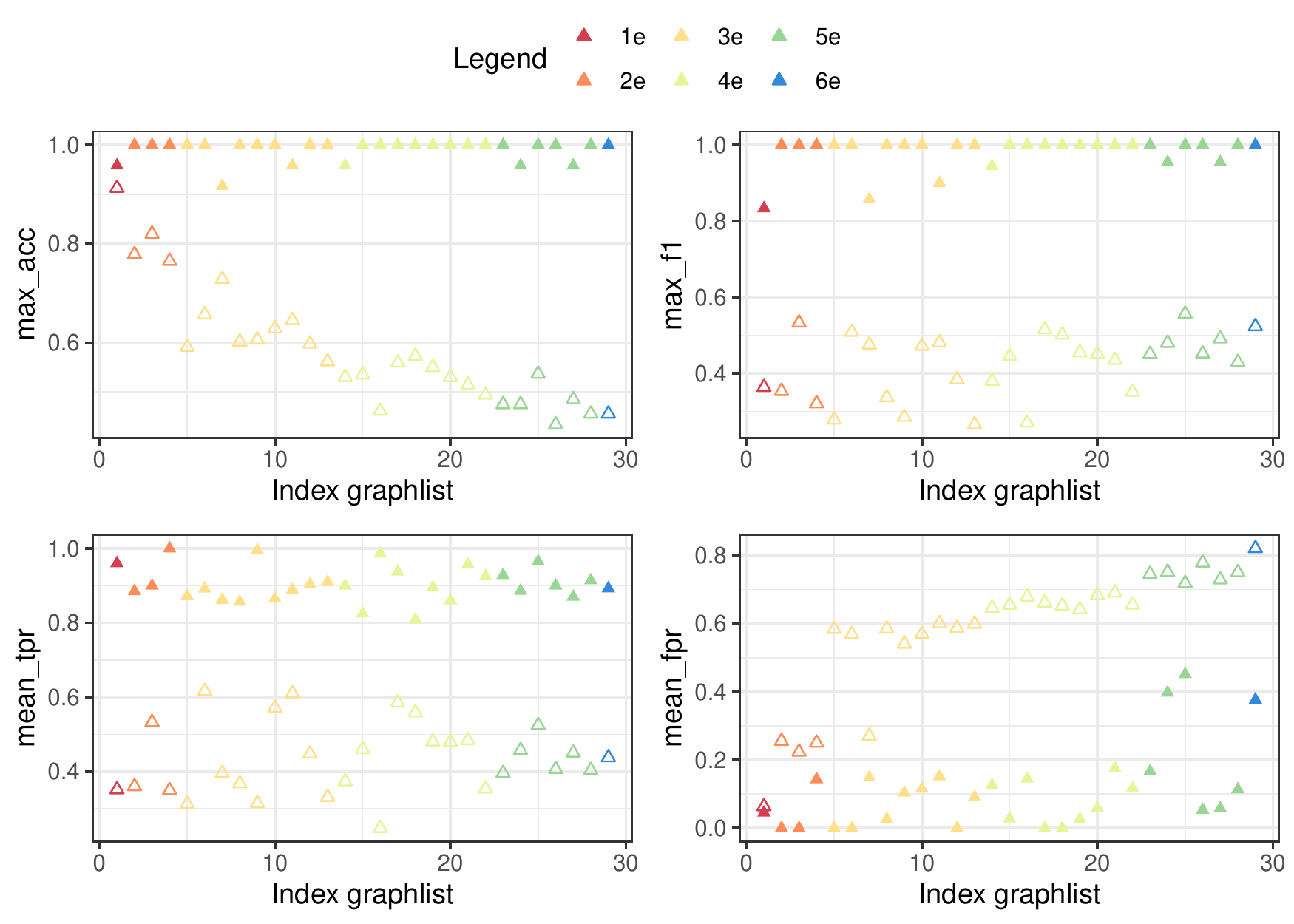}
      \caption{Four metrics measuring the quality of the estimate for DAGs with up to 4 nodes.  The number of edges is given by the coloring. \textbf{Empty:} irrepresentability condition in general not fulfilled, \textbf{Full:} irrepresentability condition fulfilled.}
      \label{fig:variousmetirr}
\end{figure}

Lastly, we present the results for the auc roc using the exact same simulation setup as for Figure~\ref{fig:variousmetirr}. The auc is particularly insightful as the roc curve is obtained by plotting tpr vs. fpr and is in this way returns a ratio of hits to wrong entries. A value of 0.5 means that the method applied  performs badly (is as good as random guessing) while a value of 1 is optimal. For the drift matrices fulfilling the weak irrepresentability condition, we observe that the auc is above 0.9 for almost all graphs that fulfill the weak irrepresentability condition while the performance is very poor for the randomly selected ones. Again as in Figure~\ref{fig:variousmetirr}, we observe that the sparser graphs where probably more randomly selected drift matrices fulfill the weak irrepresentability, perform better than those with more edges.

\begin{figure}[t]
    \centering
    \includegraphics[width=1\textwidth]{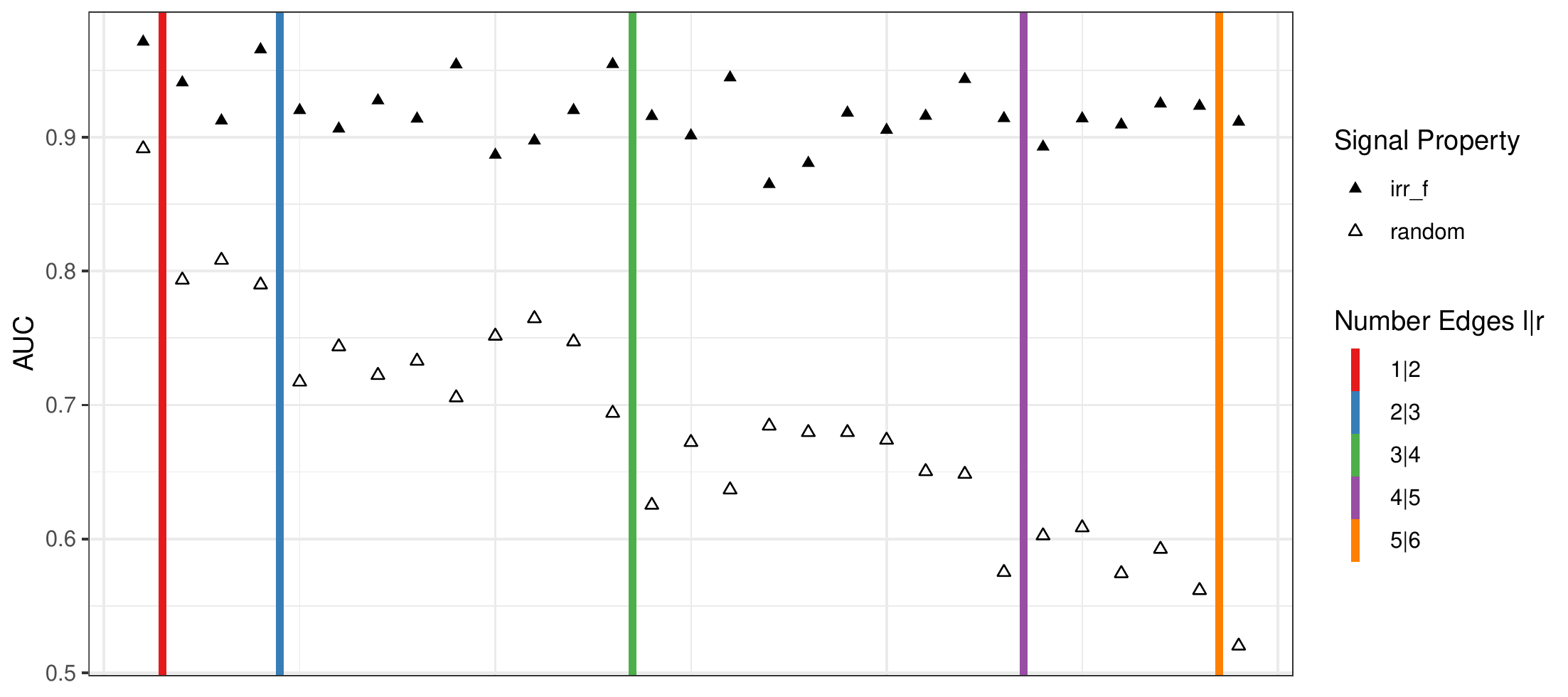}
      \caption{The AUC values for DAGs with up to 4 nodes.  The number of edges is given by the coloring. \textbf{Empty:} irrepresentability condition in general not fulfilled, \textbf{Full:} irrepresentability condition fulfilled.}
      \label{fig:aucroc}
\end{figure}

We did not include further simulations for cyclic graphs in the above setting  which is mainly because we already struggle to find 10 drift matrices supported over cyclic graphs fulfilling the weak irrepresentability condition. In particular, we struggle to find 10 ``really different'' drift matrices that do not only differ in by a small margin in the individual entries. The same applies for the irrepresentability condition and some DAGs. However, all graphs that fulfill the irrepresentability condition, also fulfill the weak irrepresentability condition as we explain in Remark~\ref{rem:weakreltonormal}. In this way they are already included in the simulations in this section and only more positive results are to be expected for graphs fulfilling the irrepresentability.  In fact, the results should be perfect if not for very small entries in the drift matrix or numerical imprecisions (Theorem~\ref{thm:probsupport}).

\newpage

  



\end{document}